\DeclareSymbolFont{largesymbol}{OMX}{yhex}{m}{n}
\DeclareMathAccent{\Widehat}{\mathord}{largesymbol}{"62} % 自定义\widehat长度, 可以独立于yhmath宏包使用
 \tikzset{
  % style to apply some styles to each segment of a path
  on each segment/.style={
    decorate,
    decoration={
      show path construction,
      moveto code={},
      lineto code={
        \path [#1]
        (\tikzinputsegmentfirst) -- (\tikzinputsegmentlast);
      },
      curveto code={
        \path [#1] (\tikzinputsegmentfirst)
        .. controls
        (\tikzinputsegmentsupporta) and (\tikzinputsegmentsupportb)
        ..
        (\tikzinputsegmentlast);
      },
      closepath code={
        \path [#1]
        (\tikzinputsegmentfirst) -- (\tikzinputsegmentlast);
      },
    },
  },
  % style to add an arrow in the middle of a path
  mid arrow/.style={postaction={decorate,decoration={
        markings,
        mark=at position 0.6 with {\arrow[#1]{stealth}} % 可根据具体情况调整数字0.6
      }}},
}
\numberwithin{figure}{section}
\newtheorem{theorem}{Theorem}[section]
\newtheorem{lemma}[theorem]{Lemma}
\newtheorem{main theorem}[theorem]{Main Theorem}
\newtheorem{proposition}[theorem]{Proposition}
\newtheorem{definition}[theorem]{Definition}
\newtheorem{construction}[theorem]{Construction}
\newtheorem{remark}[theorem]{Remark}
\newtheorem{example}[theorem]{Example}
\numberwithin{equation}{section}
\definecolor{Green}{RGB}{0 139 0}
\begin{document}

\title[On BD-algebra and CM-Auslander algebra for a gentle algebra]
      {On BD-algebra and CM-Auslander algebra for a gentle algebra and their representation types}

\author{Mengdie Zhang}
\address{Department of Mathematics, Nanjing University, Nanjing 210093, China}
\email{602023210021@smail.nju.edu.cn / zmy1net@163.com}

\author{Yu-Zhe Liu$^*$}
\address{School of Mathematics and Statistics, Guizhou University, Guiyang 550025, Guizhou, P. R. China}
\email{liuyz@gzu.edu.cn / yzliu3@163.com (Y.-Z. Liu)}

\author{Chao Zhang}
\address{State Key Laboratory of Public Big Data, School of Mathematics and Statistics, Guizhou University, Guiyang, 550025, China}
\email{zhangc@amss.ac.cn}

\maketitle
%\tableofcontents

%\setlength\parindent{0pt} %设置首行缩进
%\setlength{\parskip}{5pt} %设置段间距离

\vspace{-3mm}

%\setcounter{tocdepth}{3}
%\setcounter{secnumdepth}{3}
%\tableofcontents

\begin{abstract}
Let $A$ be a gentle algebra, and $B$ and $C$ be its BD-gentle algebra and CM-Auslander algebra, respectively. 
In this paper, we show that the representation-finiteness of $A$, $B$ and $C$ coincide 
and the representation-discreteness of $A$, $B$ and $C$ coincide. 
\end{abstract}

\begin{adjustwidth}{1.3cm}{1.3cm} 
{ \small

\noindent
\thanks{MSC2020:
16G10; % Representations of associative Artinian rings
16G60  % Representation type (finite, tame, wild, etc.) of associative algebras
%16E10; % Homological dimension in associative algebras
%16E65; % Homological conditions on associative rings (generalizations of regular, Gorenstein, Cohen-Macaulay rings, etc.)
%18G20; % Homological dimension (category-theoretic aspects)
%46B99; % In 46Bxx (normed linear spaces and Banach spaces; Banach lattices): none of the above, but in this section
%46M40 % Inductive and projective limits in functional analysis
}

\noindent
\thanks{Keywords: representation-finite; derived-discrete}

}
\end{adjustwidth}

\section{Introduction}

% Normal
\newcommand{\To}[2]{\mathop{-\!\!\!-\!\!\!\longrightarrow}\limits^{#1}_{#2}}
\def\Pic{Figure\ } \def\defines{\it\color{blue}}
\def\NN{\mathbb{N}} \def\RR{\mathbb{R}} \def\QQ{\mathbb{Q}} \def\CC{\mathbb{C}}

\def\kk{\mathds{k}} \def\Q{\mathcal{Q}} \def\I{\mathcal{I}} \def\scrC{\mathscr{C}}
\def\pdim{\mathrm{proj.dim}} \def\idim{\mathrm{inj.dim}} \def\flatdim{\mathrm{fl.dim}} \def\gldim{\mathrm{gl.dim}}
\def\source{\mathfrak{s}} \def\target{\mathfrak{t}} \def\e{\varepsilon}
\def\modcat{\mathsf{mod}} \def\Dcat{\mathsf{D}} \def\Gproj{\mathsf{G}\text{-}\mathsf{proj}}
\def\Hom{\mathrm{Hom}} \def\End{\mathrm{End}} \def\Ext{\mathrm{Ext}}

\def\Path{\wp}
\def\=<{\leqslant} \def\>={\geqslant}
\def\bbA{\mathbb{A}} \def\pmbA{\pmb{A}} \def\gentleSet{O}

\def\ind{\mathrm{ind}} \def\op{\mathrm{op}}
\def\rad{\mathrm{rad}} \def\soc{\mathrm{soc}}
\def\cyc{\mathrm{cyc}} \def\ncyc{\mathrm{ncyc}}

\def\str{\omega} \def\band{\rho} \def\M{\mathds{M}}
\def\Str{\mathsf{Str}} \def\Band{\mathsf{Ban}}
\def\hstr{\tilde{\omega}} \def\hband{\tilde{\rho}}

% data

\def\pointsize{0.8mm}
\def\pointsizeII{0.5mm}
\def\pointsizeIII{0.8mm}
\def\rotateangle{25}

% Notations in marked surface
\def\white{\color{white}}
\def\inc{\mathrm{inc}}

\def\bfS{\mathbf{S}} \def\Surf{\mathcal{S}} \def\Marked{\mathcal{M}} \def\preAS{\widehat{\mathcal{A}}} \def\AS{\Gamma}

\def\rbullet{{\color{red}\pmb{\circ}}}
\def\bbullet{{\color{blue}\pmb{\bullet}}}

\def\Poly{\mathsf{P}} \def\PolyII{\mathsf{Q}} \def\marked{{\textsc{d}}} \def\bcomp{{\textsc{b}}}

\def\redP{{\Poly}_{\rbullet}} \def\frakE{\mathfrak{E}}
\newcommand{\reda}[2]{\mathsf{v}_{\rbullet,#1}^{#2}}
\def\redanoind{\mathsf{v}_{\rbullet}}
\def\rMarked{\mathcal{M}_{\rbullet}} \def\rAS{\mathcal{A}_{\rbullet}} \def\prerAS{\widehat{\mathcal{A}}_{\rbullet}}
\def\rpunc{\mathfrak{P}_{\rbullet}}  \def\rEP{\mathrm{EP}_{\rbullet}}

\def\buleP{{\Poly}_{\bbullet}}
\newcommand{\bluea}[2]{\mathsf{v}_{\bbullet,#1}^{#2}}
\def\blueanoind{\mathsf{v}_{\bbullet}}
\def\bMarked{\mathcal{M}_{\bbullet}} \def\bAS{\mathcal{A}_{\bbullet}} \def\prebAS{\widehat{\mathcal{A}}_{\bbullet}}
\def\bpunc{\mathfrak{P}_{\bbullet}}  \def\bEP{\mathrm{EP}_{\bbullet}}

\def\av{\mathfrak{v}}

\def\tc{\tilde{c}} \def\foliation{\mathcal{F}}

% BD

\def\BD{\mathrm{BD}}

% CMA

\def\CMA{\mathrm{CMA}}

%%%%%%%%%%%%%%%%%%
%% Introduction %%
%%%%%%%%%%%%%%%%%%

Let $\kk$ be a field. Gentle algebras are important algebras in representation theory.
Many algebras are closely related to gentle algebras,
such as almost gentle algebras, biserial algebras, special biserial algebras,
string algebras, hereditary Nakayama algebras, etc.
Gentle algebras are introduced by Assem and Skowro\'{n}ski in \cite{AS1987} at first,
authors use it to study the iterated tilted algebras of type $\tilde{\mathbb{A}}$.
Subsequently, the gentle algebras received widespread attention.

Base on the works of Wald and Waschb\"{u}sch in \cite{WW1985}, Butler and Ringel introduced string and band (see Definition \ref{def:str band}) in \cite{BR1987} to describe the finitely generated module categories of string algebras,
this work provides a method to study all indecomposable modules and irreducible morphisms
in the finitely generated module categories of gentle algebras, since each gentle algebra is a string algebra.
In \cite{ALP2016}, the authors described the bounded derived category of
gentle algebra by using homotopy strings and homotopy bands (see Definition \ref{def:hstr hband}).
In recent years, the authors of \cite{OPS2018,APS2023,BCS2021,QZZ2022} found a method to describe
strings, bands, homotopy strings, and homotopy bands by using the marked surfaces, the geometric models given in \cite{HKK2017}, of gentle algebras, we obtained a geometric method to consider the properties of gentle algebras.
Thus, many questions about gentle algebras can be transformed into geometry and combination questions, such as \cite[etc]{CJS2022,CS2023b,Cpre2023,HZZ2023,FGLZ2023,LiuZhou2024}.

Let $A$ be a gentle algebra and $H$ be its normalization.
\begin{itemize}
  \item[(1)] The BD-gentle algebra of $A$ is defined as 
    \[B = \BD(A) := \left(\begin{matrix} A & H \\ \rad A & H \end{matrix}\right). \]
  \item[(2)] The CM-Auslander algebra of $A$ is defined as
    \[C = A^{\CMA} := \bigg(\End_A\bigg(\bigoplus\limits_{G \in \ind(\Gproj(A))} G\bigg)\bigg)^{\mathrm{op}}, \]
    where $\ind(\Gproj(A))$ is the set of all indecomposable Gorenstein-projective right $A$-modules (up to isomorphism).
\end{itemize}
%(see Definition \ref{def:BD}), and $C=A^{\CMA}$ be its CM-Auslander algebra (see Subsection \ref{subsect:CMA}).
In this paper, we study the (derived) representation types between $A$, $B$, and $C$,
and show the following results.

\begin{sloppypar}
\begin{theorem}[Proposition \ref{prop:repr type} and Theorem \ref{mainthm:repr-type}]
Let $A$ be a gentle algebra. Then the following statements are equivalent:
\begin{itemize}
  \item[\rm(1)] one of $A$, $B$ and $C$ is representation-finite;
  \item[\rm(2)] $A$, $B$, and $C$ both are representation-finite;
  \item[\rm(3)] for any $f_1, f_2, \cdots, f_n \in \{\BD, (-)^{\CMA}\}$,
    $f_1f_2\cdots f_n(A)$ is representation-finite.
\end{itemize}
{\rm(Note that it has been proved in \cite{CL2019} that the representation types of $A$ and $C$ coincide.)}
\end{theorem}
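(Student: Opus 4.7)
The plan is to reduce everything to the classical string/band classification of indecomposable modules over a gentle algebra, together with the fact that a gentle algebra is representation-finite if and only if it admits no band. The implications $(3) \Rightarrow (2) \Rightarrow (1)$ are immediate (take $n=0$ or $n=1$ in $(3)$, noting that $\BD$ and $(-)^{\CMA}$ may be inserted freely), so the substantive content lies in $(1) \Rightarrow (3)$. The paper already cites \cite{CL2019} for the equivalence between representation-finiteness of $A$ and of $C = A^{\CMA}$, so the main new ingredient needed for $(1) \Leftrightarrow (2)$ is to handle $B = \BD(A)$.

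For the $B$-side, I would first verify (or invoke from an earlier section of the paper) that $B=\BD(A)$ is again a gentle algebra, with an explicit description of its quiver and relations in terms of those of $A$. Concretely, $B$ is built from $A$ and its normalization $H$ by the $2\times 2$ matrix construction with off-diagonal entries $H$ and $\rad A$; this yields a new bound quiver in which each arrow of $A$ is ``doubled'' by a copy coming from $H$, with compatible gentle relations. Given a gentle presentation of $B$, its strings and bands are parametrised in a canonical way by the strings and bands of $A$, decorated by a choice of $A$- or $H$-component at each letter. The key observation is that this decoration procedure produces a band of $B$ exactly when the underlying walk in $A$ is already a band: a cyclic reduced walk in $B$ projects to a cyclic reduced walk in $A$, and conversely every band of $A$ lifts to bands of $B$.

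From this correspondence, $B$ is representation-finite iff $B$ admits no band, iff $A$ admits no band, iff $A$ is representation-finite. Combined with \cite{CL2019}, this yields the three-way equivalence $(1) \Leftrightarrow (2)$. To upgrade to $(3)$, I would show that both operators $\BD$ and $(-)^{\CMA}$ preserve gentleness, i.e.\ send a gentle algebra to a gentle algebra; this is built into the definition of $\BD$, and for $(-)^{\CMA}$ it is established in \cite{CL2019}. Consequently, for any word $f_1\cdots f_n$ over $\{\BD,\, (-)^{\CMA}\}$, the algebra $f_1\cdots f_n(A)$ is gentle, and a straightforward induction on $n$, applying the pairwise equivalence at each step, shows that $f_1\cdots f_n(A)$ is representation-finite iff $A$ is.

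The main obstacle is the string/band comparison between $A$ and $B$: since $B$ has essentially twice as many arrows as $A$, one must rule out the possibility that a ``mixed'' cyclic walk alternating between $A$-letters and $H$-letters yields a genuine new band of $B$ that does not descend to a band of $A$. Handling this cleanly---most naturally via the marked surface model of \cite{OPS2018,APS2023,BCS2021,QZZ2022,HKK2017}, where the BD-construction should correspond to a transparent surgery on the surface (such as a local blow-up at each marked point)---is the technical heart of the argument, and I expect it to occupy the bulk of the proof in the main body.
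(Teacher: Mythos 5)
Your proposal follows essentially the same route as the paper: reduce to the Butler--Ringel band criterion for representation-finiteness, quote \cite{CL2019} for the $A$ versus $A^{\CMA}$ comparison, prove that $(\Q,\I)$ has a band if and only if the bound quiver of $\BD(A)$ does (the paper's Lemma \ref{lemm:BDband}, done there combinatorially via the doubling of permitted threads from Theorem \ref{thm:BD(gent)}, with the surface-model argument you suggest appearing as an alternative proof in Remark \ref{rmk:BDband}), and then induct on the length of the word in $\{\BD,(-)^{\CMA}\}$ using that both operations preserve gentleness. The step you flag as the technical heart --- ruling out ``mixed'' bands of $\BD(A)$ not descending to $A$ --- is exactly what the converse direction of Lemma \ref{lemm:BDband} handles, so your plan matches the paper's proof in both structure and substance.
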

\end{sloppypar}

\begin{sloppypar}
\begin{theorem}[Proposition \ref{prop:der-disc} and Theorem \ref{mainthm:der repr-type}]
Let $A$ be a gentle algebra. Then the following statements are equivalent:
\begin{itemize}
  \item[\rm(1)] one of $A$, $B$ and $C$ is derived-discrete;
  \item[\rm(2)] $A$, $B$, and $C$ both are derived-discrete;
  \item[\rm(3)] for any $f_1, f_2, \cdots, f_n \in \{\BD, (-)^{\CMA}\}$,
    $f_1f_2\cdots f_n(A)$ is derived-discrete.
\end{itemize}
\end{theorem}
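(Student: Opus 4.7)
The implications $(3)\Rightarrow(2)\Rightarrow(1)$ are trivial; the content is $(1)\Rightarrow(3)$. The first step is to observe that $B=\BD(A)$ and $C=A^{\CMA}$ are again gentle whenever $A$ is gentle (as set up earlier in the paper). Hence every algebra of the form $f_{1}\cdots f_{n}(A)$ with $f_{i}\in\{\BD,(-)^{\CMA}\}$ is itself gentle, and a straightforward induction on $n$ reduces the full statement to the three-term equivalence: $A$ is derived-discrete iff $B$ is iff $C$ is.

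The next step is to reinterpret derived-discreteness combinatorially. By the classification of indecomposable objects in $\Dcat^{b}(\modcat\,A)$ for a gentle algebra from \cite{ALP2016}, each homotopy band $\hband$ of $A$ contributes a $\kk^{\ast}\times\NN$-parameter family of indecomposables in its shift orbit, whereas homotopy strings give only countably many isomorphism classes. Thus $A$ is derived-discrete if and only if $A$ admits no homotopy band, and the problem becomes showing that $A$, $B$, and $C$ admit homotopy bands simultaneously.

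To prove this I would use the geometric model of \cite{OPS2018,APS2023,HKK2017,QZZ2022}: homotopy bands of a gentle algebra correspond bijectively to free homotopy classes of essential (non-contractible, non-boundary-parallel) simple closed curves on the associated marked surface. I would then describe the marked surfaces of $B$ and $C$ explicitly in terms of the marked surface of $A$. In both cases the expected picture is that the construction modifies only the boundary decoration (adding marked points and short arcs along the boundary for $\BD$, and performing a \emph{polygon completion} along the forbidden threads for $(-)^{\CMA}$), while leaving the genus, the punctures and the handle structure unchanged. Under such a description the essential simple closed curves on the three surfaces are in canonical bijection, so the absence of homotopy bands is transported through $\BD$ and $(-)^{\CMA}$, and the three-term equivalence follows.

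The main obstacle is this last geometric verification. One must give a precise surface-level description of $\BD(A)$ and $A^{\CMA}$, and check, arrow by arrow and vertex by vertex, that the new arrows and relations do not create or destroy any closed curve in the interior of the surface. The CMA side should be tractable using the known description of the indecomposable Gorenstein-projective $A$-modules and the resulting surgery on forbidden threads; the BD side will require a fresh analysis of the marked surface associated with the upper-triangular matrix algebra built from $A$ and its normalization $H$. Once both surface descriptions are in place, the equivalence of derived-discreteness for $A$, $B$, $C$ is immediate, and (3) follows by the induction above.
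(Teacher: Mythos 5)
Your skeleton agrees with the paper's: reduce derived-discreteness to the non-existence of a homotopy band (the paper invokes Bekkert--Merklen, Theorem \ref{thm:BM}), prove that $A$, $\BD(A)$ and $A^{\CMA}$ admit homotopy bands simultaneously, and then get statement (3) by induction using the fact that $\BD(A)$ and $A^{\CMA}$ are again gentle (Theorems \ref{thm:BD(gent)} and \ref{thm:CL}). Where you diverge is in how the transfer of homotopy bands is carried out: you propose to do it entirely on the marked surfaces, which is the route the paper takes only in its supplementary Remark \ref{rmk:BDhband CMAhband}; the paper's primary proof (Lemmas \ref{lemm:BDhband} and \ref{lemm:CMAhband}) is a direct combinatorial correspondence of homotopy letters on the bound quivers --- each permitted subpath $v_{i,1}\to\cdots\to v_{i,l_i}$ of a homotopy letter is ``doubled'' to $v_{i,1}'\to v_{i,1}\to v_{i,2}'\to\cdots\to v_{i,l_i}$ for $\BD$, and each arrow $\alpha$ on a forbidden cycle is replaced by the length-two path $\alpha^-\alpha^+$ for $(-)^{\CMA}$, with an explicit inverse construction in both cases. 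That combinatorial version is shorter and avoids any appeal to the surface model.

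The genuine gap is that your proposal defers exactly the step that carries the mathematical content. You write that ``one must give a precise surface-level description of $\BD(A)$ and $A^{\CMA}$'' and check that the modification is only a boundary decoration --- but that verification \emph{is} the proof; it occupies Constructions \ref{const:BDsurface} and \ref{const:CMA} together with Propositions \ref{prop:BDsurface} and \ref{prop:CMAsurface} in the paper, and without it nothing has been established. Moreover, your stated geometric criterion is imprecise in a way that matters: homotopy bands do not correspond to all free homotopy classes of essential \emph{simple} closed curves, but to closed curves (possibly non-simple) admitting a compatible grading with respect to the foliation $\foliation$, i.e.\ admissible curves $(c,\tc)$ with $\tc(0)=\tc(1)$. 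So even after describing the surfaces of $B$ and $C$, you must additionally check that the surgery is compatible with the line field --- the paper does this by observing that $\foliation$ on $\BD(\bfS_A)$ and $\bfS_A^{\CMA}$ restricts to the foliation on $\bfS_A$, since only marked points and arcs are added. As written, your argument would let a non-gradable closed curve (which contributes no homotopy band) masquerade as evidence of non-discreteness. Filling in the surface descriptions and the grading compatibility would make the proposal a valid alternative proof, essentially the one in Remark \ref{rmk:BDhband CMAhband}; until then it is an outline rather than a proof.
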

\end{sloppypar}

% organization

This paper is organized as follows.
In Section \ref{Sect:Prel}, we recall some preliminaries on the geometric models, BD-gentle algebras, and CM-Auslander algebras of gentle algebras, including the constructions of the bound quivers of BD-gentle algebras and CM-Auslander algebras.
Based on the above constructions, we provide a method to draw the marked surfaces of BD-gentle algebras and CM-Auslander algebras in Section \ref{Sect:BD CMA}.
In Section \ref{Sect:repr-type} we prove the main results of this paper which points out that
the (derived) representation types of gentle algebra $A$, BD-gentle algebra $B$, and CM-Auslander algebra $C$ coincide.

\section{Preliminaries} \label{Sect:Prel}

In this section, we recall the definitions and some properties of gentle algebras and their geometric models, Burban--Drozd algebras, and Cohen-Macaulay Auslander algebras.
Throughout this paper, we use $\Q = (\Q_0,\Q_1,\source,\target)$ to represent a finite connected quiver,
where $\Q_0$ is the vertex set, $\Q_1$ is the arrow set,
$\source$ is the function $\Q_1\to\Q_0$ sending each arrow to its source,
and $\target$ is the function $\Q_1\to\Q_0$ sending each arrow to its sink.
It is well-known that the composition of two arrows $a$ and $b$ induces a multiplication
\[ \cdot: \kk\Q \times \kk\Q \to \kk\Q \]
in the path algebra $\kk\Q$.
In this paper, the composition of $a$ and $b$ is defined as follows:
\[
a\cdot b
:= \begin{cases}
 ab, & \text{ if } \target(a) = \source(b);  \\
 0, & \text{ otherwise. }
\end{cases}
\]
$\forall v\in\Q_0$, $\e_v$ is the path of length zero corresponding to $v$.
Let $\Path = a_1\cdots a_n$ ($a_1,\ldots,a_n \in \Q_1$) be a path.
We use $\ell(p)$ to represent the length of $p$, that is, $\ell(p)=n$.
For any finite-dimensional algebra $A=\kk\Q/\I$, we use $\modcat(A)$ to represent the category of finitely generated right $A$-module, see \cite{ASS2006}.

\subsection{Gentle algebras and their geometric models} \label{subsect:geo mods}

Each finite-dimensional hereditary Nakayama algebra $\pmbA_m := \kk\bbA_m$ corresponds to an $(m+1)$-polygon $P=\Poly(\pmbA_m)$ which is shown in \Pic \ref{fig:polygon}, where $\bbA_m$ is the quiver
\[ \xymatrix{
1 \ar[r]^{a_1} & 2 \ar[r]^{a_2} & \cdots \ar[r]^{a_{m-1}} & m \ar[r]^{a_m} & m+1,
} \]
such that:
\begin{itemize}
  \item[(1)] $\Poly$ has a unique edge which is said to be a {\defines boundary component} $\bcomp = \bcomp(\pmbA_m)$;
  \item[(2)] all edges except the boundary component are said to be {\defines $\rbullet$-arcs} (or {\defines closed arcs}) which are marked by $\reda{1}{\Poly}$, $\reda{2}{\Poly}$, $\ldots$, $\reda{m}{\Poly}$ in anticlockwise order such that $\reda{i}{\Poly}$ corresponds to the vertex $i$ of $\bbA_m$ for all $1 \=< i \=< m$.
\end{itemize}
\begin{figure}[htbp]
\centering
\begin{tikzpicture}[scale=1.25]
\draw[ red ][line width = 1pt][rotate=  0] (2,0) -- (1,1.73);
\draw[ red ][line width = 1pt][rotate= 60] (2,0) -- (1,1.73)[dotted];
\draw[ red ][line width = 1pt][rotate=120] (2,0) -- (1,1.73);
\draw[ red ][line width = 1pt][rotate=180] (2,0) -- (1,1.73);
\draw[black][line width = 2pt][rotate=240] (2,0) -- (1,1.73);
\draw[ red ][line width = 1pt][rotate=300] (2,0) -- (1,1.73);
\filldraw[red][line width = 1pt][fill=white][rotate=  0] (2,0) circle(1mm);
\foreach \x in {0,60,120,180,240,300} \draw[red][line width = 1pt][fill=white][rotate= \x] (2,0) circle(1mm);
%\foreach \x in {-3,-2,...,3} \draw[-*] (\x,0)--++(0,1);
%
\draw (0,-2.2) node{boundary};
\draw (0,-2.5) node{component $\bcomp(\pmbA_m)$};
\draw[rotate=  60] (0,-2.2) node{$\reda{1}{P}$};
\draw[rotate= 120] (0,-2.2) node{$\reda{2}{P}$};
\draw[rotate= 240] (0,-2.2) node{$\reda{m-1}{P}$};
\draw[rotate= 300] (0,-2.2) node{$\reda{m}{P}$};
\draw[cyan][line width=1pt][rotate=  60] (0.2,-1.73) arc(180:60:0.8);
\draw[cyan][line width=1pt][rotate= 120] (0.2,-1.73) arc(180:60:0.8)[dotted];
\draw[cyan][line width=1pt][rotate= 180] (0.2,-1.73) arc(180:60:0.8)[dotted];
\draw[cyan][line width=1pt][rotate= 240] (0.2,-1.73) arc(180:60:0.8);
\draw[cyan][line width=1pt][rotate=  60][->] (0.2,-1.73) arc(180:115:0.8);
\draw[cyan][line width=1pt][rotate= 120][->] (0.2,-1.73) arc(180:115:0.8)[dotted];
\draw[cyan][line width=1pt][rotate= 180][->] (0.2,-1.73) arc(180:115:0.8)[dotted];
\draw[cyan][line width=1pt][rotate= 240][->] (0.2,-1.73) arc(180:115:0.8);
\draw[cyan] ( 0.9,0) node{$\alpha_1$};
\draw[cyan] (-0.9,0) node{$\alpha_m$};
\fill[blue] (0,-1.73) circle(1mm) node[above]{$\marked(\pmbA_m)$};
\end{tikzpicture}
\caption{The polygon $\redP(\pmbA_m)$ corresponding to $\pmbA_m$}
\label{fig:polygon}
\end{figure}
Then each clockwise interior angle (see $\alpha_1$, $\cdots$, $\alpha_m$ in \Pic \ref{fig:polygon}) decided by $\reda{i}{\Poly}$ and $\reda{i+1}{\Poly}$ ($1 \=< i < m$) corresponds to the arrow $a_i$ of $\bbA_m$.
A {\defines $\rbullet$-polygon} corresponding to $\pmbA_m$ is the pair $\redP(\pmbA_m) := (\Poly(\pmbA_m), \marked(\pmbA_m))$ given by $\Poly=\Poly(\pmbA_m)$ and $\marked(\pmbA_m)$,
where $\marked(\pmbA_m)$ is a point on the boundary component of $\marked(\pmbA_m)$ which is written as ``$\bbullet$'' in \Pic \ref{fig:polygon}.
In particular, we define that $\frakE(\redP(\pmbA_m))$ is defined as the set of all $\rbullet$-arcs of $P$.

Now we recall gentle algebras and their marked surfaces.

Recall that a {\defines monomial algebra} is a finite-dimensional bound quiver algebra $\kk\Q/\I$ such that
the ideal $\I$ is generated by some path of length $\>= 2$.
A {\defines string algebra} is a monomial algebra such that the following conditions hold.
\begin{itemize}
  \item[(1)] $\forall v\in \Q_0$, the number $\source^{-1}(v)$ of arrows starting at $v$ is less than or equal to $2$;
  \item[(2)] $\forall v\in \Q_0$, the number $\target^{-1}(v)$ of arrows ending at $v$ is less than or equal to $2$;
  \item[(3)] $\forall a\in \Q_1$, at most one arrow $b$ with $\target(a)=\source(b)$ satisfies $ab\notin\I$;
  \item[(4)] $\forall b\in \Q_1$, at most one arrow $a$ with $\target(a)=\source(b)$ satisfies $ab\notin\I$.
\end{itemize}

{\defines Gentle algebra} is introduced by Assem and Skowro\'{n}ski in \cite{AS1987}, which is defined as a string algebra satisfying the following conditions.
\begin{itemize}
  \item[(5)] $\forall a\in \Q_1$, at most one arrow $b$ with $\target(a)=\source(b)$ satisfies $ab\in\I$;
  \item[(6)] $\forall b\in \Q_1$, at most one arrow $a$ with $\target(a)=\source(b)$ satisfies $ab\in\I$;
  \item[(7)] all generators of $\I$ are path of length two.
\end{itemize}

In \cite{BD2017}, Burban and Drozd provide another definition of gentle algebra which is equivalent to that of gentle algebra given by Assem and Skowro\'{n}ski.

\begin{definition} \rm \label{def:BD-gentle}
A {\defines gentle algebra} $A$ is defined as the following quotient
\[ A(\bm{m},\simeq_{\gentleSet}) :=
\prod_{i=1}^s H_i \bigg/ \langle E_{jj}^{(i)}-E_{\jmath \jmath}^{(\imath)} \mid (i,j) \simeq_{\gentleSet} (\imath, \jmath)\rangle, \]
where
\begin{itemize}
  \item $\bm{m} = (m_1,m_2,\cdots,m_s) \in \NN^{s}_{\>= 2}$;
  \item $H_i$ is a lower triangular $(m_i\times m_i)$-matrix algebra $T_{m_i}$;
  \item $E_{jj}^{(i)} = (x_{uv})_{1 \=< u,v \=< m_i}$ is the matrix lying in $H_i$
     such that $x_{uv}= \begin{cases}
      1, & u=v=j; \\ 0, & \text{otherwise};
     \end{cases}$
  \item ``$\simeq_{\gentleSet}$'' is a symmetric relation defined on
      \[ \gentleSet := \gentleSet(\bm{m}) := \bigcup_{1 \=< i \=< s} \{(i,j) \mid 1 \=< j \=< m_i\},  \]
      such that for some $(i,j) \in \gentleSet$, there is a unique $(\imath,\jmath) \in \gentleSet$ with $(i,j) \ne (\imath,\jmath)$ satisfying $(i,j) \simeq_{\gentleSet} (\imath,\jmath)$.
\end{itemize}
The direct product $H := \prod_{i=1}^s H_i$ is called {\defines normalization} of $A$.
\end{definition}

The geometric model of a gentle algebra is the marked surface introduced by \cite{HKK2017}.
In \cite{OPS2018} and \cite{BCS2021}, Opper$-$Plamondon$-$Schroll and Baur$-$Coelho-Sim\~{o}es
reestablished the definition of the geometric model for each gentle algebra,
and used the above geometric models to describe the derived category and the module category of gentle algebra, respectively.
In fact, in Definition \ref{def:BD-gentle}, we have $H_i \cong \pmbA_{m_i}$,
and then $H_i$ corresponds to the $\rbullet$-polygon $\Poly(\pmbA_{m_i})$.
This correspondence provides a method to draw the geometric model of any gentle algebra (see \cite{QZZ2022}).
Let us review it.

\begin{definition} \rm \label{def:QZZ}
A {\defines marked surface} $\bfS = (\Surf,\Marked,\rAS)$ is a triple given by the following steps.

(1) $\Surf$ is the surface given by the following steps.
\begin{itemize}
  \item A $t$-tuple $\pmb{\Poly}=(\Poly_1, \ldots, \Poly_s)$ with an involution function $\mathrm{inv}: \prerAS^{\sim} \to \prerAS^{\sim}$,
    where:
    \begin{itemize}
      \item $\Poly_j = \Poly(\pmbA_{m_j})$ and $m_j \>= 2$ hold for all $1 \=< j \=< s$;
      \item $\prerAS^{\sim}$ is some subset of the set $\prerAS$ of all $\rbullet$-arcs,
    \end{itemize}
    such that for some $\rbullet$-arc $\reda{i}{\Poly_j}$ of $\Poly_j$ ($1\=< i\=< m_j$),
    there is a unique $\rbullet$-arc $\reda{\imath}{\Poly_{\jmath}}$ of $\Poly_{\jmath}$ ($1\=< \imath\=< m_{\jmath}$)
    which satisfies $\mathrm{inv}(\reda{i}{\Poly_j}) = \reda{\imath}{\Poly_{\jmath}}$.

  \item A $t$-tuple $\pmb{\PolyII}=(\PolyII_1, \ldots, \PolyII_t)$ with a bijection $\mathrm{bij}: \preAS_{\pmb{\PolyII}} \to \prerAS\backslash\prerAS^{\sim}$, where:
    \begin{itemize}
      \item each $\PolyII_k$ ($1\=< k\=< t$) is the $2$-gon $\redP(\pmbA_{n_k})$ corresponding to the simple algebra $\pmbA_{n_k} = \pmbA_1$;
      \item $\preAS_{\pmb{\PolyII}} = \{ \reda{1}{\PolyII_k} \mid 1 \=< k \=< t \}$.
    \end{itemize}

  \item The surface $\Surf$ is defined as $(\bigcup_{j=1}^s \Poly_j \cup \bigcup_{k=1}^t \PolyII_j)/\simeq$,
    where ``$\simeq$'' is the symmetric relation defined on $\preAS_{\pmb{Q}}\cup\prerAS$ given by $\reda{i}{\Poly_j} \simeq \reda{\imath}{\Poly_{\jmath}}$ ($\reda{i}{\Poly_j} \ne \reda{\imath}{\Poly_{\jmath}}$) if and only if one of the following holds:
    \begin{itemize}
      \item $\mathrm{bij}^{\pm1}(\reda{i}{\Poly_j}) = \reda{\imath}{\Poly_{\jmath}}$;
      \item $\mathrm{inv}(\reda{i}{\Poly_j}) = \reda{\imath}{\Poly_{\jmath}}$.
    \end{itemize}
    That is, if $\reda{i}{\Poly_j} \simeq \reda{\imath}{\Poly_{\jmath}}$, then we glue them.
    In this case, we naturally obtain an equivalence relation defined on the set of all vertices of $\rbullet$-polygons.
\end{itemize}

(2) $\Marked = \rMarked \cup \bMarked$, where
\begin{itemize}
  \item $\rMarked$ is the set of all endpoints of $\rbullet$-arcs up to the symmetric relation ``$\simeq$'' given as above, and the elements in $\rMarked$ are called {\defines $\rbullet$-marked points} or {\defines closed marked points};
  \item $\bMarked = \{\marked(\pmbA_{n_j}) \mid 1\=< j\=< s\} \cup \{\marked(\pmbA_{n_k}) \mid 1\=< k\=< s\}$, and the elements in $\bMarked$ are called {\defines $\bbullet$-marked points} or {\defines open marked points}
\end{itemize}

(3) $\rAS := (\preAS_{\pmb{Q}}\cup\prerAS)/\simeq  \ = \prerAS/\sim$ is the set of all $\rbullet$-arcs (up to the equivalence naturally obtained by ``$\simeq$'',
we provide an instance in Example \ref{exp:1} for it, see (\ref{formula: in exp:1})).
The set $\rAS$ is called a {\defines closed full formal arc system} or a {\defines full formal $\rbullet$-arc system} (=$\bbullet$-FFAS for short).
\end{definition}

The following theorem shows that any gentle algebra has a marked surface.

\begin{theorem}[{\!\!\cite{OPS2018,BCS2021,QZZ2022}}] \label{thm:corresp}
There is a bijection between the set of isoclasses of gentle algebras and the set of homotopy classes of marked surfaces. Definition \ref{def:QZZ} provides a method to draw the marked surface of each gentle algebra.
\end{theorem}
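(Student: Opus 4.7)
The plan is to establish the bijection by exhibiting explicit constructions in both directions and showing they are mutually inverse, then checking that the construction respects the homotopy relation on marked surfaces. Since this is a restatement of results from \cite{OPS2018,BCS2021,QZZ2022}, the proof amounts to organizing Definition \ref{def:QZZ} into a well-defined assignment and verifying its invertibility.

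For the forward direction I would start from a gentle algebra written as $A = A(\bm{m}, \simeq_{\gentleSet})$ as in Definition \ref{def:BD-gentle}. Each factor $H_i \cong \pmbA_{m_i}$ is a hereditary Nakayama algebra and is thus assigned the $\rbullet$-polygon $\redP(\pmbA_{m_i}) = (\Poly_i,\marked(\pmbA_{m_i}))$ described in Figure \ref{fig:polygon}. Each pair $(i,j) \simeq_{\gentleSet} (\imath,\jmath)$ with $(i,j) \ne (\imath,\jmath)$ singles out two $\rbullet$-arcs, one on $\Poly_i$ and one on $\Poly_{\imath}$, namely $\reda{j}{\Poly_i}$ and $\reda{\jmath}{\Poly_{\imath}}$; declaring $\mathrm{inv}(\reda{j}{\Poly_i}) = \reda{\jmath}{\Poly_{\imath}}$ defines the involution on $\prerAS^{\sim}$ required by Definition \ref{def:QZZ}. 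The remaining $\rbullet$-arcs (those corresponding to indices $(i,j)$ with no nontrivial partner under $\simeq_{\gentleSet}$) are paired via $\mathrm{bij}$ with the unique $\rbullet$-arc of a fresh $2$-gon $\PolyII_k = \redP(\pmbA_1)$. Gluing all paired arcs produces $\Surf$; the sets $\rMarked$ and $\bMarked$ and the $\rbullet$-FFAS $\rAS$ are then read off as in Definition \ref{def:QZZ}. One verifies that the resulting triple $\bfS = (\Surf,\Marked,\rAS)$ satisfies the axioms of a marked surface and that isomorphic gentle algebras give homotopy equivalent $\bfS$'s (the choice of labelling of $H_i$ and of the matrix units only permutes the polygons).

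For the reverse direction, given a marked surface $\bfS$, I would enumerate the polygons of $\pmb{\Poly}$ and record, for each $\Poly_i$, the integer $m_i \geq 2$ counting its $\rbullet$-arcs; this produces the tuple $\bm{m}$ and, via $H_i = \pmbA_{m_i}$, the factor algebras. The gluings recorded by $\mathrm{inv}$ on $\prerAS^{\sim}$ translate directly into the relation $\simeq_{\gentleSet}$ on $\gentleSet(\bm{m})$, while the $2$-gons in $\pmb{\PolyII}$ correspond exactly to the fixed points of $\simeq_{\gentleSet}$ on $\gentleSet(\bm{m})$, hence carry no algebraic data beyond recording the unpaired arcs. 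Putting these together yields an algebra $A(\bm{m},\simeq_{\gentleSet})$; comparing with the forward construction shows that the two operations are mutually inverse.

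The main obstacle will be verifying homotopy invariance: different choices of gluing realizations of the same combinatorial data yield homeomorphic, hence homotopic, surfaces, and conversely one must check that any homotopy of $\bfS$ preserving $\Marked$ and $\rAS$ does not alter the combinatorial triple $(\bm{m}, \mathrm{inv}, \mathrm{bij})$ up to relabelling. The cleanest route is to argue at the level of the combinatorial data carried by the $\rbullet$-FFAS: because $\rAS$ is a full formal arc system, the surface $\Surf$ is reconstructed up to homeomorphism from the gluing pattern of the polygons, and homotopy classes correspond exactly to equivalence classes of gluing data. One should also check the boundary cases $m_i = 2$ and the situation where a polygon has all $\rbullet$-arcs paired with $2$-gons (so the $\bbullet$-marked point is isolated on its boundary component), to ensure the correspondence is uniform. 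Once these technical points are handled, the bijection follows, and the recipe in Definition \ref{def:QZZ} is by construction the explicit realization of one direction of the bijection.
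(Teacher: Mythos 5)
This statement is not proved in the paper at all: Theorem \ref{thm:corresp} is imported verbatim from \cite{OPS2018,BCS2021,QZZ2022}, and the surrounding text only supplies the construction of Definition \ref{def:QZZ} as the recipe realizing one direction of the correspondence. So there is no internal argument to compare your proposal against; what you have written is essentially a reconstruction of the proof strategy of the cited works, and in particular your forward direction is exactly the content of Definition \ref{def:QZZ} applied to the Burban--Drozd presentation $A(\bm{m},\simeq_{\gentleSet})$ of Definition \ref{def:BD-gentle}, with the polygons $\redP(\pmbA_{m_i})$ glued along $\mathrm{inv}$ and padded by $2$-gons via $\mathrm{bij}$. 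Your reverse direction (reading $\bm{m}$ and $\simeq_{\gentleSet}$ off the polygon decomposition induced by the $\rbullet$-FFAS) is likewise the standard inverse and is consistent with how the paper uses the theorem.

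Two caveats. First, the step you flag as ``the main obstacle'' --- that homotopy classes of marked surfaces equipped with a full formal arc system are classified exactly by the combinatorial gluing data $(\bm{m},\mathrm{inv},\mathrm{bij})$ up to relabelling, and that any homotopy preserving $\Marked$ and $\rAS$ leaves this data unchanged --- is asserted rather than argued; this is precisely the nontrivial content delegated to \cite{OPS2018,BCS2021,QZZ2022}, so within the scope of this paper your sketch does not close the gap so much as correctly locate it. Second, in the forward direction you should justify that the Burban--Drozd data $(\bm{m},\simeq_{\gentleSet})$ is itself an isomorphism invariant of $A$ (up to the obvious relabelling): the factors $H_i$ are recovered from the bound quiver as its maximal permitted threads and $\simeq_{\gentleSet}$ records the vertex identifications, so this holds, but it is a point your argument uses implicitly when claiming that isomorphic algebras yield homotopic surfaces. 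With those provisos, the proposal is a faithful outline of the cited proof rather than a new route.
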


For a marked surface $\bfS$, its $\rbullet$-FFAS $\rAS$ has a dual arc system, write it as $\bAS$ and call it as an {\defines open full formal arc system} or a {\defines full formal $\bbullet$-arc system} (=$\bbullet$-FFAS for short), which can be obtained by the following steps.
\begin{itemize}
  \item[(1)] $\bAS$ is a set of some $\bbullet$-arc. Here, an {\defines $\bbullet$-arc} is a curve whose ending points are $\bbullet$-marked points.
  \item[(2)] For any $\blueanoind \in \bAS$, there is a unique $\redanoind \in \rAS$ such that $\blueanoind \cap \redanoind$ has a unique intersection in the inner $\Surf^{\circ}:=\Surf\backslash(\partial\Surf\cup \rpunc)$ of $\Surf$,
      where $\partial\Surf$ is the boundary of $\Surf$ (that is, the union of all boundary components) and $\rpunc$ is the set of all $\rbullet$-marked points lying in $\Surf\backslash\partial\Surf$.
  \item[(3)] For any $\redanoind \in \rAS$, there is a unique $\blueanoind \in \bAS$ such that $\redanoind \cap \blueanoind$ has a unique intersection in $\Surf^{\circ}$.
\end{itemize}
It is clear that $\bAS$ is uniquely determined by $\rAS$.
Therefore, $\bfS = (\Surf, \Marked, \rAS)$ can be written as $\bfS = (\Surf, \Marked, \bAS)$ in some cases.

\begin{example} \label{exp:1} \rm
Let $A = \kk\Q/\I$ be a gentle algebra given by the bound quiver $(\Q,\I)$, where $\Q = $
\[ \xymatrix@R=0.4cm@C=1.5cm{ & & 3\ar[rd]^{c} & \\ 1\ar[r]^{a} & 2\ar[ru]^{b} \ar[rr]_{d} & & 4, }\]
and $\I = \langle ad \rangle$.
Then $A \cong (H_1\times H_2) /\langle E_{22}^{(1)}-E_{11}^{(2)}, E_{33}^{(1)}-E_{22}^{(2)} \rangle$, where
\[\gentleSet = \{ (1,1), (1,2), (1,3), (1,4), (2,1), (2,2) \}; \]
\[(1,2) \simeq_{\gentleSet} (2,1), \ (1,4) \simeq_{\gentleSet} (2,2); \]
\[H_1 = T_{4}
= \left(
  \begin{smallmatrix}
   \kk & & & \\
   \kk & \kk & & \\
   \kk & \kk & \kk & \\
   \kk & \kk & \kk & \kk
  \end{smallmatrix}
  \right) \cong \pmbA_3; \]
\[H_2 = T_{2}
= \left(
  \begin{smallmatrix}
   \kk & & \\
   \kk & \kk & \\
   \kk & \kk & \kk
  \end{smallmatrix}
  \right) \cong \pmbA_2. \]
\Pic \ref{fig:examp MS} shows the marked surface $\bfS = (\Surf,\Marked,\rAS)$ of $A$.
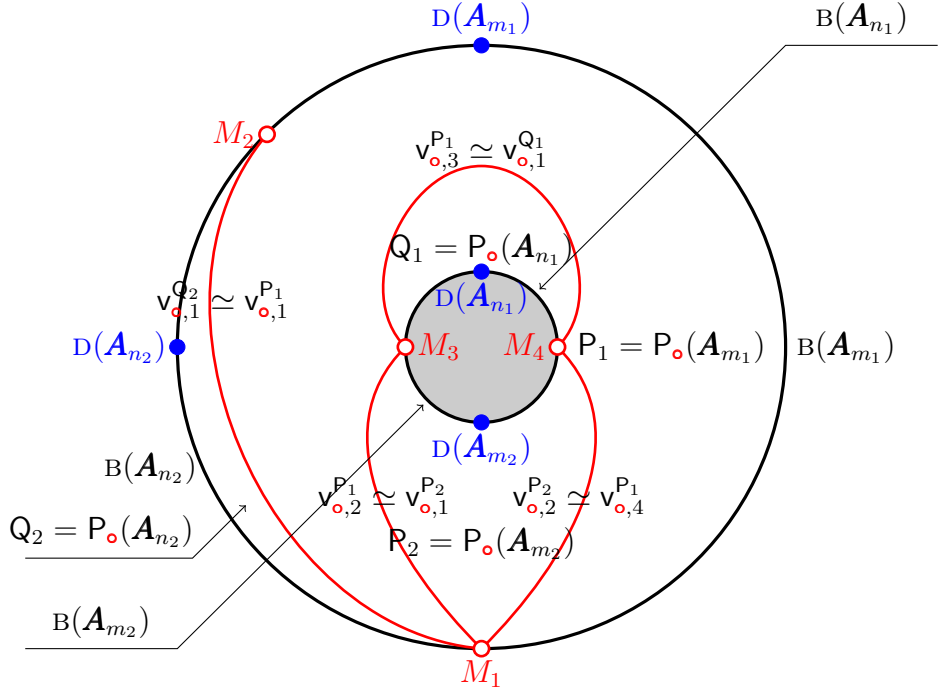
\begin{figure}[htbp]
\begin{center}
\begin{tikzpicture}[scale=2]
%% marked surface
\draw [black][line width=1.2pt] (0,0) circle (2cm);
\filldraw [black!20] (0,0) circle (0.5cm);
\draw [black][line width=1.2pt] (0,0) circle (0.5cm);
%% full formal arc system
%% (open)
%\draw[blue] (-2,0) to[out=80,in=-160] (0,2);
%\draw[blue] (0,2) -- (0,0.5);
%\draw[blue]
%(0,2) to [out=-135, in=90] (-1.5,0) to[out=-90, in=180] (-0.8,-0.8) to [out=0,in=-135] (0,-0.5);
%\draw[blue]
%(0,2) to [out=-45, in=90] (1.5,0) to[out=-90, in=0] (0.8,-0.8) to [out=180,in=-45] (0,-0.5);
% (closed)
\draw[red][line width=1pt] (-0.5,0) to[out=135,in=180] (0, 1.2) to[out=0, in=45] (0.5,0);
\draw[red][line width=1pt] (-0.5,0) to[out=-135,in=135] (0,-2);
\draw[red][line width=1pt] (0.5,0) to[out=-45,in=45] (0,-2);
\draw[red][line width=1pt] (-1.41,1.41) to[out=-130,in=175] (0,-2);
%% marked points
% (open)
\filldraw[blue] (0,2) circle (0.05cm); \draw[blue] (0,2) node[above]{$\marked(\pmbA_{m_1})$};
\filldraw[blue] (-2,0) circle (0.05cm); \draw[blue] (-2,0) node[left]{$\marked(\pmbA_{n_2})$};
\filldraw[blue] (0,0.5) circle (0.05cm); \draw[blue] (0,0.5) node[below]{$\marked(\pmbA_{n_1})$};
\filldraw[blue] (0,-0.5) circle (0.05cm); \draw[blue] (0,-0.5) node[below]{$\marked(\pmbA_{m_2})$};
% (closed)
\filldraw[red][fill=white][line width=1pt] ( 0   ,-2   ) circle (0.05cm) node[below]{$M_1$};
\filldraw[red][fill=white][line width=1pt] (-1.41, 1.41) circle (0.05cm) node[ left]{$M_2$};
\filldraw[red][fill=white][line width=1pt] (-0.5 , 0   ) circle (0.05cm) node[right]{$M_3$};
\filldraw[red][fill=white][line width=1pt] ( 0.5 , 0   ) circle (0.05cm) node[ left]{$M_4$};
% all vertices of gentle algebra of surface
\draw[red] (-1.7 , 0.30) node[black]{$\reda{1}{\PolyII_2} \simeq \reda{1}{\Poly_1}$};
\draw[red] (-0.64,-1.00) node[black]{$\reda{2}{\Poly_1} \simeq \reda{1}{\Poly_2}$};
\draw[red] ( 0.64,-1.00) node[black]{$\reda{2}{\Poly_2} \simeq \reda{4}{\Poly_1}$};
\draw[red] ( 0.00, 1.30) node[black]{$\reda{3}{\Poly_1} \simeq \reda{1}{\PolyII_1}$};
\draw[black] (-1.8,-0.8) node[left]{$\bcomp(\pmbA_{n_2})$};
\draw[black] (2,0) node[right]{$\bcomp(\pmbA_{m_1})$};
\draw[black][<-] ( 1.41*0.25+0.03, 1.41*0.25+0.03) -- (2,2) -- (3,2);
\draw[black] (2.5,2) node[above]{$\bcomp(\pmbA_{n_1})$};
\draw[black][<-] (-1.41*0.25-0.03, -1.41*0.25-0.03) -- (-2,-2) -- (-3,-2);
\draw[black] (-2.5,-2) node[above]{$\bcomp(\pmbA_{m_2})$};
% polygons
\draw (1.25,0) node{$\Poly_1=\redP(\pmbA_{m_1})$};
\draw (0,-1.3) node{$\Poly_2=\redP(\pmbA_{m_2})$};
\draw (0,0.65) node{$\PolyII_1=\redP(\pmbA_{n_1})$};
\draw[<-] (-1.55, -1.05) -- (-1.9,-1.4) -- (-3,-1.4);
\draw (-2.5, -1.4) node[above]{$\PolyII_2=\redP(\pmbA_{n_2})$};
\end{tikzpicture}
\caption{An example for marked surface}
\label{fig:examp MS}
\end{center}
\end{figure}
Here, keep the notations from Definition \ref{def:QZZ}, we have:
\[ s=2, m_1=4, m_2=2, \pmb{\Poly} = \{\Poly_1, \Poly_2\}; \]
\[ \prerAS = \{\reda{1}{\Poly_1}, \reda{2}{\Poly_1}, \reda{3}{\Poly_1}, \reda{4}{\Poly_1};
   \reda{1}{\Poly_2}, \reda{2}{\Poly_2}\} \supseteq
   \prerAS^{\sim} = \{ \reda{2}{\Poly_1}, \reda{4}{\Poly_1}, \reda{1}{\Poly_2}, \reda{2}{\Poly_2} \}; \]
\begin{align*}
  \mathrm{inv} : \prerAS^{\sim} & \to \prerAS^{\sim} \text{ which is defined as } \\
    \reda{2}{\Poly_1} & \mapsto \reda{1}{\Poly_2}, \ \
    \reda{4}{\Poly_1}   \mapsto \reda{2}{\Poly_2}, \\
    \reda{1}{\Poly_2} & \mapsto \reda{2}{\Poly_1}, \ \
    \reda{2}{\Poly_2}   \mapsto \reda{4}{\Poly_1};
\end{align*}
\[ t=2, n_1=n_2=1, \pmb{\PolyII} = \{\PolyII_1, \PolyII_2\}; \]
\[ \preAS_{\pmb{\PolyII}} = \{ \reda{1}{\PolyII_1}, \reda{1}{\PolyII_2} \},
   \prerAS\backslash\prerAS^{\sim} = \{ \reda{1}{\Poly_1}, \reda{3}{\Poly_1} \}
; \]
\begin{align*}
  \mathrm{bij} : \preAS_{\pmb{\PolyII}} & \to \prerAS\backslash\prerAS^{\sim} \text{ which is defined as } \\
    \reda{1}{\PolyII_1} & \mapsto \reda{3}{\Poly_1}, \ \
    \reda{2}{\PolyII_2}   \mapsto \reda{1}{\Poly_1};
\end{align*}
\begin{align} \label{formula: in exp:1}
  \rAS & = \{ \reda{1}{\Poly_1}, \reda{2}{\Poly_1}, \reda{3}{\Poly_1}, \reda{4}{\Poly_1};
           \reda{1}{\Poly_2}, \reda{2}{\Poly_2}, \reda{1}{\PolyII_1}, \reda{1}{\PolyII_2} \}/ \simeq \nonumber \\
       & = \{ \{\reda{1}{\Poly_1}, \reda{1}{\PolyII_2}\},
              \{\reda{2}{\Poly_1}, \reda{1}{\Poly_2}\},
              \{\reda{3}{\Poly_1}, \reda{1}{\PolyII_1}\},
              \{\reda{4}{\Poly_1}, \reda{2}{\Poly_2}\}
           \} \\
       & \text{which is obtained by ``$\simeq$'' dividing $\preAS_{\pmb{\PolyII}} \cup \prerAS$}; \nonumber
\end{align}
and
\[ \Marked = \rMarked \cup \bMarked
           = \{ M_1, M_2, M_3, M_4,
                \marked(\pmbA_{m_1}), \marked(\pmbA_{m_2}),
                \marked(\pmbA_{n_1}), \marked(\pmbA_{n_2}) \}. \]
Notice that each unordered pair $\{\reda{j}{\Poly_i}, \reda{\jmath}{\Poly_{\imath}}\}$ lying in $\rAS$ can be seen an equivalence class naturally obtain by ``$\simeq$''.
The $\bbullet$-FFAS $\bAS$ decided by $\rAS$ is shown in \Pic \ref{fig:FFAS}.
\begin{figure}[htbp]
\begin{center}
\begin{tikzpicture}[scale =2]
%% marked surface
\draw [black][line width=1.2pt] (0,0) circle (2cm);
\filldraw [black!20] (0,0) circle (0.5cm);
\draw [black][line width=1.2pt] (0,0) circle (0.5cm);
%% full formal arc system
%% (open)
\draw[blue][line width=1pt] (-2,0) to[out=80,in=-160] (0,2);
\draw[blue][line width=1pt] (0,2) -- (0,0.5);
\draw[blue][line width=1pt]
(0,2) to [out=-135, in=90] (-1.5,0) to[out=-90, in=180] (-0.8,-0.8) to [out=0,in=-135] (0,-0.5);
\draw[blue][line width=1pt]
(0,2) to [out=-45, in=90] (1.5,0) to[out=-90, in=0] (0.8,-0.8) to [out=180,in=-45] (0,-0.5);
% (closed)
%\draw[red][line width=1pt] (-0.5,0) to[out=135,in=180] (0, 1.2) to[out=0, in=45] (0.5,0);
%\draw[red][line width=1pt] (-0.5,0) to[out=-135,in=135] (0,-2);
%\draw[red][line width=1pt] (0.5,0) to[out=-45,in=45] (0,-2);
%\draw[red][line width=1pt] (-1.41,1.41) to[out=-130,in=175] (0,-2);
%% marked points
% (open)
\filldraw[blue] (0,2) circle (0.05cm); \draw[blue] (0,2) node[above]{$\marked(\pmbA_{m_1})$};
\filldraw[blue] (-2,0) circle (0.05cm); \draw[blue] (-2,0) node[left]{$\marked(\pmbA_{n_2})$};
\filldraw[blue] (0,0.5) circle (0.05cm); \draw[blue] (0,0.5) node[below]{$\marked(\pmbA_{n_1})$};
\filldraw[blue] (0,-0.5) circle (0.05cm); \draw[blue] (0,-0.5) node[above]{$\marked(\pmbA_{m_2})$};
% (closed)
\filldraw[red][fill=white][line width=1pt] ( 0   ,-2   ) circle (0.05cm) node[below]{$M_1$};
\filldraw[red][fill=white][line width=1pt] (-1.41, 1.41) circle (0.05cm) node[ left]{$M_2$};
\filldraw[red][fill=white][line width=1pt] (-0.5 , 0   ) circle (0.05cm) node[right]{$M_3$};
\filldraw[red][fill=white][line width=1pt] ( 0.5 , 0   ) circle (0.05cm) node[ left]{$M_4$};
\end{tikzpicture}
\caption{The $\rbullet$-FFAS and $\bbullet$-FFAS}
\label{fig:FFAS}
\end{center}
\end{figure}
The marked surface shown in this figure is written as $(\Surf, \Marked, \bAS)$.
\end{example}

\subsection{Baurban$-$Drozd(BD) gentle algebras}

\begin{definition} \label{def:BD} \rm
Let $A=A(\bm{m},\simeq_{\gentleSet})$ be a gentle algebra and $H=H_1\times \cdots\times H_s$ be its normalization.
Its Buarban$-$Drozd gentle algebra, say BD-gentle algebra and write it as $\BD(A)$ for simplification, is defined as
\[
\BD(A)= \left(\begin{matrix}
    A & H\\
    \rad A & H\\
    \end{matrix}\right).
\]
\end{definition}

The normalization of a gentle algebra is also a gentle algebra whose global dimension is two (see \cite[Theorem 3.1 (1) and (3)]{BD2017}).
For a gentle algebra $A=\kk\Q/\I$, let us recall the method to compute the bound quiver of $\BD(A)$ in this subsection. To do this, we need the following lemma.

\begin{lemma} \label{lemm:BD(Am)}
We have an isomorphism $\BD(\pmbA_m) \cong \pmbA_{2m}$ between two algebras.
\end{lemma}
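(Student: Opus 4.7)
The plan is to read off the Gabriel quiver of $\BD(\pmbA_m)$ directly from the matrix presentation. As a preliminary, note that $\pmbA_m$, viewed as a gentle algebra $A(\bm{m},\simeq_{\gentleSet})$ with a single linear component and empty involution $\simeq_{\gentleSet}$, is its own normalization, so by Definition~\ref{def:BD} the algebra
\[
\BD(\pmbA_m)=\left(\begin{matrix}\pmbA_m&\pmbA_m\\ \rad\pmbA_m&\pmbA_m\end{matrix}\right)
\]
is a generalized $2\times 2$ matrix algebra.

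First, I would exhibit a complete set of primitive orthogonal idempotents. Writing $e_1,e_2,\ldots$ for the vertex idempotents of $\pmbA_m$ and setting
\[
E_i:=\left(\begin{matrix}e_i&0\\0&0\end{matrix}\right),\qquad F_i:=\left(\begin{matrix}0&0\\0&e_i\end{matrix}\right),
\]
primitivity follows from $e_i\pmbA_m e_i=\kk e_i$, while orthogonality and the identity $\sum_i(E_i+F_i)=1$ are immediate. Hence the vertex count of $\BD(\pmbA_m)$ doubles that of $\pmbA_m$, matching the vertex count of $\bbA_{2m}$.

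Second, I would compute $\rad/\rad^2$ block by block. A direct matrix calculation shows that the only irreducible radical elements are: the diagonal entries $\hat{\beta}_i$ of the upper-right block at position $(i,i)$, giving arrows $\hat{\beta}_i\colon F_i\to E_i$, and the subdiagonal entries $\hat{\gamma}_i$ of the lower-left block at position $(i+1,i)$, giving arrows $\hat{\gamma}_i\colon E_i\to F_{i+1}$. The apparent candidates $E_i\to E_{i+1}$ from the upper-left block and $F_i\to F_{i+1}$ from the lower-right block both lie in $\rad^2$: one checks that $\hat{\beta}_{i+1}\hat{\gamma}_i$ and $\hat{\gamma}_i\hat{\beta}_i$ recover these two generators respectively. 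Consequently the Gabriel quiver is the linear chain
\[
F_1\to E_1\to F_2\to E_2\to\cdots\to F_m\to E_m,
\]
which is a copy of $\bbA_{2m}$.

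Finally, every nontrivial composition along this chain is a nonzero matrix-unit product inside the ambient $M_2(\pmbA_m)$, so no relation is forced on $\BD(\pmbA_m)$; it is hereditary and therefore isomorphic to $\kk\bbA_{2m}=\pmbA_{2m}$. The main obstacle is the $\rad/\rad^2$ computation: one must simultaneously verify that the $\hat{\beta}_i$'s and $\hat{\gamma}_i$'s span $\rad/\rad^2$ and that every other radical element factors nontrivially through them. Both checks reduce to block-by-block bookkeeping in the generalized matrix algebra, where the asymmetry between the full $\pmbA_m$ in the upper-right and $\rad\pmbA_m$ in the lower-left is precisely what produces the linear (rather than branching) quiver $\bbA_{2m}$.
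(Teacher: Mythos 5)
Your proof is correct, but it takes a different route from the paper's. The paper's argument is a one-step change of basis: it conjugates the generalized matrix algebra $\BD(\pmbA_m)=\left(\begin{smallmatrix}\pmbA_m&\pmbA_m\\ \rad\pmbA_m&\pmbA_m\end{smallmatrix}\right)$ by the explicit permutation matrix $\mathbf{P}$ whose columns interleave the indices of the two diagonal blocks ($m+1,\,1,\,m+2,\,2,\dots$), and observes that $\bm{P}^{-1}\bm{M}\bm{P}$ lands exactly in the lower triangular algebra $T_{2m}\cong\pmbA_{2m}$. You instead extract the bound quiver: primitive orthogonal idempotents $E_i,F_i$, the computation $\rad/\rad^2=\left(\begin{smallmatrix}0&\pmbA_m/\rad\pmbA_m\\ \rad\pmbA_m/\rad^2\pmbA_m&0\end{smallmatrix}\right)$ yielding the $2m-1$ arrows of the chain $F_1\to E_1\to F_2\to\cdots\to F_m\to E_m$, and the verification that every composite is a nonzero matrix-unit product, so the algebra is hereditary of type $\bbA_{2m}$. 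The two proofs encode the same combinatorial fact --- your interleaved chain $F_1,E_1,F_2,E_2,\dots$ is precisely the column ordering of the paper's $\mathbf{P}$ --- but the conjugation argument is shorter and produces the isomorphism in one formula, while your Gabriel-quiver computation is more informative: it explains \emph{why} the asymmetry between the full $\pmbA_m$ block and the $\rad\pmbA_m$ block forces a linear rather than branching quiver, and it is the form of the statement actually used later (e.g.\ in Construction \ref{const:BDsurface} and Lemmas \ref{lemm:BDband} and \ref{lemm:BDhband}, where one needs the vertices $(i,j)'\to(i,j)$ and the splitting of each arrow into two). A minor point: since the linear quiver $\bbA_{2m}$ is isomorphic to its reverse, the orientation conventions in your arrow assignment ($F_i\to E_i$ versus $E_i\to F_i$) do not affect the conclusion, but if you want your chain to match the paper's labelling $(i,1)'\to(i,1)\to(i,2)'\to\cdots$ you should fix the source/target convention for arrows in $e_u(\rad/\rad^2)e_v$ consistently with the paper's composition rule $a\cdot b\ne 0$ only if $\target(a)=\source(b)$.
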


\begin{proof}
Notice that the normalization of $\pmbA_m$ is itself. Then, by Definition \ref{def:BD}, we have
\[
\BD(\pmbA_m)=\left(\begin{matrix}
    \pmbA_m & \pmbA_m\\
    \rad \pmbA_m & \pmbA_m
    \end{matrix}\right).
\]
Assume $\bm{e}_i = \big( 0\ \cdots \ 0\ \mathop{1}\limits^{i}\ 0\ \cdots \ 0  \big)$
Then the matrix
$$\mathbf{P}=\begin{pmatrix}
\bm{e}_{m+1}' & \bm{e}_1' & \bm{e}_{m+2}' & \bm{e}_2' & \cdots & \bm{e}_{2m-1}' & \bm{e}_{2m}'
\end{pmatrix}$$
is invertible and
\[\BD(\pmbA_m)\rightarrow \pmbA_{2m}, \bm{M}\rightarrow \bm{P}^{-1}\bm{M}\bm{P}\]
is a $\kk$-linear isomorphism,
where $\bm{e}_i'$ is the transposition of $\bm{e}_i$.
Moreover, it is also a homomorphism between two algebras.
Thus, $\BD(\pmbA_m)\cong \pmbA_{2m}$.
\end{proof}

\begin{theorem} \label{thm:BD(gent)}
Let $A=A(\bm{m}, \simeq_{\gentleSet})$ be a gentle algebra where $\bm{m}=(m_1, \cdots, m_t)\in \mathbb{N}^t_{\>= 2}$
and $H=H_1\times\cdots\times H_t$ be the normalization of $A$. Then
\begin{align} \label{formula:BD(gent)}
\BD(A) \cong & \BD(H_1)\times\cdots\times\BD(H_s)/\simeq_{\gentleSet} \\
 := & \{(\bm{X}(1),\cdots,\bm{X}(s)) \in T_{2m_1}(\kk) \times \cdots \times T_{2m_s}(\kk)| \nonumber \\
 & X(i)_{jj}=X(\imath)_{\jmath \jmath} \text{ if } (i,j)\simeq_{\gentleSet} (\imath,\jmath) \text{ and } (i,j)\neq (\imath,\jmath) \} \nonumber
\end{align}
is gentle. Here, for any $1\=< i\=< s$ and $1\=< j\=< 2m_i$, $X(i)_{jj}$ is the element of the $j$-th row and the $j$-th column of the matrix $\bm{X}(j) := (X(j)_{rs})_{1\=<r,d\=<m_j}$.
\end{theorem}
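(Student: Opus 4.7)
The plan is to establish the isomorphism (\ref{formula:BD(gent)}) in three ingredients: rewrite the product $\prod_{i=1}^s \BD(H_i)$ as a single block-matrix algebra, use the presentation of $A$ as a quotient of $H$ by diagonal idempotent differences to induce a surjection onto $\BD(A)$, and track the idempotents through the isomorphism of Lemma \ref{lemm:BD(Am)}. Gentleness will then be checked directly on the resulting bound quiver.

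First, I would unpack the product $\prod_{i=1}^s \BD(H_i)$. Since the direct product of block matrix algebras is itself a block matrix algebra with the blocks being direct products, we have
\[
\prod_{i=1}^s \BD(H_i)
\ =\ \prod_{i=1}^s \begin{pmatrix} H_i & H_i \\ \rad H_i & H_i \end{pmatrix}
\ \cong\ \begin{pmatrix} H & H \\ \rad H & H \end{pmatrix},
\]
where $H = \prod_i H_i$ is the normalization of $A$. On the other hand, $\BD(A) = \left(\begin{smallmatrix} A & H \\ \rad A & H \end{smallmatrix}\right)$. Applying the canonical surjection $H \twoheadrightarrow A$ to the top-left block and $\rad H \twoheadrightarrow \rad A$ (which holds because $A = H/J$ with $J$ generated by semisimple elements, so the radical passes to the quotient cleanly) to the bottom-left block yields a surjective algebra homomorphism $\prod_i \BD(H_i) \twoheadrightarrow \BD(A)$. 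Its kernel is precisely the ideal generated by the images of the differences $E^{(i)}_{jj}-E^{(\imath)}_{\jmath\jmath}$ inside the top-left block.

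Second, I would translate these idempotent identifications through Lemma \ref{lemm:BD(Am)}. The permutation matrix $\bm{P}$ in that lemma interleaves the ``normalization'' basis vectors $\bm{e}_{m+1},\dots,\bm{e}_{2m}$ with the ``algebra'' basis vectors $\bm{e}_1,\dots,\bm{e}_m$, so under the isomorphism $\BD(\pmbA_{m_i}) \cong \pmbA_{2m_i}$ the idempotent $E^{(i)}_{jj}$ of the top-left block of $\BD(H_i)$ corresponds to a specific diagonal entry $X(i)_{jj}$ of $T_{2m_i}$. Collecting these identifications across all $i$ and imposing them whenever $(i,j) \simeq_\gentleSet (\imath,\jmath)$ gives exactly the quotient on the right-hand side of (\ref{formula:BD(gent)}), proving the isomorphism.

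Finally, for gentleness of $\BD(A)$, I would read off its bound quiver from this description: each factor contributes a linear quiver of type $\bbA_{2m_i}$ (with all consecutive paths of length two being zero, as is standard for $\pmbA_{2m_i}$), and the equivalence $\simeq_\gentleSet$ glues these linear quivers at the chosen diagonal vertices, in exactly the same combinatorial pattern already used to assemble $A$ from the $H_i$. The four gentle conditions at a glued vertex then reduce to the corresponding conditions already verified for $A$, together with the observation that the doubled arrows from the BD construction enter and exit only at the new interior vertices of each $\bbA_{2m_i}$. The main obstacle I anticipate is bookkeeping: precisely which diagonal entry of $T_{2m_i}$ receives $E^{(i)}_{jj}$ under $\bm{P}^{-1}(-)\bm{P}$, and checking carefully that at every glued vertex the pairing conditions (5)--(6) defining a gentle algebra are not spoiled by the new arrows introduced when passing from $\bbA_{m_i}$ to $\bbA_{2m_i}$. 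Once this indexing is pinned down, everything reduces to the already-established gentleness of $A$.
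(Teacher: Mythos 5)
Your overall strategy is the same as the paper's: realize $\prod_{i=1}^s\BD(H_i)$ as the $2\times2$ block algebra $\left(\begin{smallmatrix} H & H\\ \rad H & H\end{smallmatrix}\right)$, transport the diagonal idempotents $E^{(i)}_{jj}$ through the interleaving conjugation of Lemma \ref{lemm:BD(Am)}, and conclude that the identifications $\simeq_{\gentleSet}$ on $H$ become the stated identifications of diagonal entries of $\prod_i T_{2m_i}(\kk)$. The paper organizes this as an induction on $s$, peeling off one component $H_{r+1}$ at a time and using a block permutation $\bm{P}$ to split $\BD(A_{\=< r+1})$ into $\BD(A_{\=< r})$ and $\BD(H_{r+1})$ modulo the gluing; you do it in one global step. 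That difference is cosmetic.

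There are, however, two concrete problems. First, the direction of your comparison map: you build a surjection $\prod_i\BD(H_i)\twoheadrightarrow\BD(A)$ from a ``canonical surjection $H\twoheadrightarrow A$'' and compute its kernel. Under the reading that the theorem itself forces (its right-hand side is explicitly a \emph{subalgebra} of $\prod_i T_{2m_i}(\kk)$, not a quotient), $H$ is an overring of $A$ --- the normalization --- and the relevant map goes the other way: $A$ is the subalgebra of $H$ of tuples whose designated diagonal entries agree, $\rad A=\rad H$, and $\BD(A)$ sits inside $\left(\begin{smallmatrix} H & H\\ \rad H & H\end{smallmatrix}\right)$ as the subalgebra cut out by those same conditions on the top-left block. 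The literal two-sided ideal $\langle E^{(i)}_{jj}-E^{(\imath)}_{\jmath\jmath}\rangle$ of $\prod_i H_i$ contains $E^{(i)}_{jj}\cdot(E^{(i)}_{jj}-E^{(\imath)}_{\jmath\jmath})=E^{(i)}_{jj}$, since the two summands are orthogonal, so the honest quotient collapses the idempotents and is not $A$; your kernel computation therefore cannot be carried out as stated. (Definition \ref{def:BD-gentle} invites this confusion, but the subalgebra reading is the one that matches formula (\ref{formula:BD(gent)}) and Example \ref{exp:1}.) Second, your parenthetical that in each factor ``all consecutive paths of length two'' are zero, ``as is standard for $\pmbA_{2m_i}$,'' is false: $\pmbA_{2m_i}=\kk\bbA_{2m_i}$ is hereditary (a lower triangular matrix algebra) with no relations at all, and the relations of $\BD(A)$ arise only at the glued vertices, exactly as for $A$. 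The quickest route to gentleness is simply to observe that the right-hand side of (\ref{formula:BD(gent)}) is literally of the form $A(\bm{m}',\simeq_{\gentleSet})$ of Definition \ref{def:BD-gentle} with $\bm{m}'=(2m_1,\dots,2m_s)$, hence gentle by definition.
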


\begin{proof}
For any $r\=< s$, let $\bm{m}_{\=< r}=(m_1, \cdots, m_r)\in \mathbb{N}^r_{\>= 2}$,
$H_{\=< r}=H_1\times\cdots\times H_r$, $A_{\=< r}=A(\bm{m}_{\=< r}, \simeq_{\gentleSet})$,
and $B_r=\BD(A_r)$.
For the case of $r<s$, denote by
$B_{\=< r}=\begin{pmatrix}
A_{\=< r} & H_{\=< r}\\
\rad A_{\=< r} & H_{\=< r}\\
\end{pmatrix}.$
Then we have
$$B_{\=< r+1}=\begin{pmatrix}
A_{\=< r+1} & H_{\=< r+1}\\
\rad A_{\=< r+1} & H_{\=< r+1}\\
\end{pmatrix}=\begin{pmatrix}
A_{\=< r}       &                     &    H_{\=< r}   &        \\
                 &  \pmbA_{m_{r+1}}       &                 & H_{r+1}\\
\rad A_{\=< r}  &                     &    H_{\=< r}   &        \\
                 & \rad \pmbA_{m_{r+1}}   &                 & H_{r+1}\\
\end{pmatrix},$$
and there may exist some elements, write them as $(i,j)$, in $\gentleSet(\bm{m}_{\=< r})$ satisfying $(i,j) \simeq_{\gentleSet} (r+1,l)$.
Assume that the matrix
\[
\bm{P}=\begin{pmatrix}
\bm{E}_1 & & &  \\
& \bm{0} & \bm{E}_2 &  \\
& \bm{E}_3 & \bm{0} &  \\
& & & \bm{E}_4 \\
\end{pmatrix},
\]
where $\bm{E}_1 = \bm{E}_4 \in T_{2m_{\=< r}}(\kk)$ ($m_{\=< r} := m_1+\cdots +m_r$) and
$\bm{E}_2 = \bm{E}_3 \in T_{2m_{r+1}}(\kk)$ are identities.
Then we have an isomorphism between two algebras as follows:
\begin{align*}
B_{\=< r+1} \cong
& \bm{P}^{-1}B_{\=< r+1}\bm{P}=
{\begin{pmatrix}
   A_{\=< r} & H_{\=< r} & &  \\
   \rad A_{\=< r} &  H_{\=< r}   &  &  \\
   & & \pmbA_{m_{r+1}}  & H_{r+1} \\
   & & \rad \pmbA_{m_{r+1}} & H_{r+1} \\
\end{pmatrix}
} \\
= &
{\begin{pmatrix}
  B_{\=< r} &\\
  &  B_{r+1} \\
\end{pmatrix}}
= \BD(H_1) \times \cdots \times \BD(H_{r+1})/\simeq_{\gentleSet}.
\end{align*}
Note that $B_{\=< 1}=B_1$, then an easy induction gives (\ref{formula:BD(gent)}) as required.
\end{proof}

In Section \ref{Sect:BD CMA}, we provide a method to draw the marked surface of $\BD(A)$.

\subsection{Cohen$-$Macaulay$-$Auslander (CM-Auslander) algebras} \label{subsect:CMA}
For any ring $R$, recall that a {\defines Gorenstein project $R$-module} (=G-projective for short) is an $R$-module $G$ with a complete projective resolution
\[ \xymatrix{
\cdots \ar[r]^{d_{-2}} & P_{-1}  \ar[r]^{d_{-1}} & P_0 \ar[r]^{d_{0}} & P_1 \ar[r]^{d_{1}} & P_2 \ar[r]^{d_{2}} & \cdots
} \]
such that the above exact sequence is $\Hom_R(-,R)$-exact. Obviously, any projective module is G-projective.
Kalck provided a description of any G-projective module over gentle algebra in \cite{Kal2015}, see the following theorem.

\begin{theorem}[{\!\!\cite[Theorem 2.4 (a)]{Kal2015}}] \label{thm:Kalck Gproj}
Let $A=\kk\Q/\I$ be a gentle algebra. Then an indecomposable module $G$ over $A$ is G-projective if and only if one of the following conditions holds.
\begin{itemize}
  \item[\rm(1)] $G$ is projective;
  \item[\rm(2)] $G\cong \alpha A$, where $\alpha$ is an arrow on some forbidden cycle,
    where a {\defines forbidden cycle} is a path $\scrC = a_0a_1\cdots a_{\ell-1}$ on the bound quiver $(\Q,\I)$ such that $a_{\overline{i}}a_{\overline{i+1}} \in \I$ holds for all $i\in \NN$ {\rm(}$\overline{i}$ is $i$ modulo $\ell$ here{\rm)}.
\end{itemize}
\end{theorem}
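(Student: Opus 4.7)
The plan is to analyze syzygies of the right ideals $\alpha A$ (for $\alpha \in \Q_1$), and then invoke the fact, due to Geiss--Reiten, that every gentle algebra is Iwanaga--Gorenstein, so a finitely generated module is Gorenstein-projective exactly when it fits into a totally acyclic complex of finitely generated projectives. First I would compute the first syzygy of $\alpha A$. The projective cover is the surjection $P_{\target(\alpha)} = \e_{\target(\alpha)} A \twoheadrightarrow \alpha A$ sending $p \mapsto \alpha p$, whose kernel is the set of paths $p$ starting at $\target(\alpha)$ with $\alpha p \in \I$. Gentle axiom (5) guarantees that at most one arrow $\beta$ with $\source(\beta) = \target(\alpha)$ satisfies $\alpha \beta \in \I$, yielding the dichotomy
\[
\Omega(\alpha A) \cong
\begin{cases}
\beta A & \text{if such a } \beta \text{ exists,}\\
0 & \text{otherwise, in which case } \alpha A \cong P_{\target(\alpha)}.
\end{cases}
\]

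Next I would iterate. The orbit of $\alpha$ under the partial map $\alpha \mapsto \beta$ is a forward walk in $\Q_1$ along length-two relations; by finiteness it either terminates at a projective (whence $\alpha A$ has finite projective dimension, hence is G-projective only when already projective) or is eventually periodic. In the periodic case, the tail cycle $\alpha_0 \alpha_1 \cdots \alpha_{\ell-1}$ satisfies $\alpha_i \alpha_{i+1 \bmod \ell} \in \I$ for all $i$, i.e., it is a forbidden cycle in the sense of the statement, and the short exact sequences $0 \to \alpha_{i+1}A \to P_{\target(\alpha_i)} \to \alpha_i A \to 0$ splice around the cycle into an unbounded periodic projective resolution of each $\alpha_i A$. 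Applying the same analysis inside the opposite algebra $A^{\op}$ (again gentle, with the same forbidden cycle reversed) yields the analogous resolution for $A\alpha_i$ on the left; dualizing and gluing produces a complete projective resolution of $\alpha_i A$, proving G-projectivity of (2).

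For the converse, I would use the classification of indecomposable $A$-modules as string or band modules. Embedding a non-projective string module $M(\str)$ into its projective cover identifies $\Omega M(\str)$ as a direct sum of modules of the form $\alpha A$, one per ``hook'' of $\str$. Since $\Gproj(A)$ is closed under syzygies and direct summands, if $M(\str)$ is G-projective then each such $\alpha A$ is G-projective, so by the previous step each such $\alpha$ lies on a forbidden cycle; a combinatorial analysis then forces $\str$ to reduce to a single such arrow, whence $M(\str) \cong \alpha A$. Band modules are excluded by a direct calculation showing $\Ext^1_A(M_\band, A) \neq 0$ for any band module $M_\band$, using the explicit Butler--Ringel projective resolutions of band modules.

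The main obstacle I anticipate is establishing total acyclicity in the second step: the periodic projective resolution must remain exact after applying $\Hom_A(-,A)$. This is exactly where the self-duality of the forbidden-cycle condition under passage to $A^{\op}$ becomes essential, and where one uses the finite self-injective dimension provided by the Iwanaga--Gorenstein property to splice the projective resolution with its dualized version into a genuine two-sided totally acyclic complex. The converse direction for strings of length greater than one also requires a careful bookkeeping of hooks and cohooks, but this is purely combinatorial once the syzygy formula for string modules is in hand.
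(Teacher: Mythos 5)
This statement is not proved in the paper at all: it is imported verbatim from Kalck (\cite[Theorem 2.4(a)]{Kal2015}) and used as a black box in Section \ref{subsect:CMA}, so there is no in-paper argument to compare yours against. Judged on its own terms, your forward direction ((2) $\Rightarrow$ G-projective) is essentially right: the syzygy computation $\Omega(\alpha A)\cong\beta A$ is correct, the short exact sequences around a forbidden cycle do splice into a doubly infinite periodic complex of projectives with differentials ``left multiplication by $\alpha_i$'', and total acyclicity follows from the same syzygy computation performed in $A^{\op}$ (the dual complex has differentials ``right multiplication by $\alpha_i$'', whose kernels are $A\alpha_{i-1}$ precisely because the cycle is forbidden and condition (6) of gentleness gives uniqueness of the predecessor). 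One correction of emphasis: no ``gluing with the dualized version'' is needed -- the periodic complex is already complete, and the op-algebra computation is only the verification that $\Hom_A(-,A)$ preserves its exactness. Also note that your dichotomy ``terminates or eventually periodic'' is sharper than you state: condition (6) makes the successor map $\alpha\mapsto\beta$ injective where defined, so a non-terminating orbit is purely periodic and there is never a tail. You need this observation explicitly, because the theorem asserts $\alpha$ lies \emph{on} a forbidden cycle, not merely that its orbit eventually enters one.

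The converse is where your proposal has a genuine gap. Knowing that $M(\str)$ is G-projective gives you that $\Omega M(\str)$ is G-projective, hence (granting your syzygy description) that each summand $\alpha_j A$ has $\alpha_j$ on a forbidden cycle -- but this is only a necessary condition on $\Omega M(\str)$ and by itself says nothing that forces $M(\str)$ to equal a single $\alpha A$; a string of length two whose syzygy is such an $\alpha_jA$ passes your test yet is not G-projective. The ``combinatorial analysis'' you defer is therefore the entire content of the converse, and it cannot be run on the syzygy alone: you must use a property of $M(\str)$ itself, e.g.\ that a non-projective G-projective module is a (second) syzygy, hence by the monomial-algebra structure a direct sum of cyclic right ideals $pA$, each isomorphic to $\alpha A$ for $\alpha$ the last arrow of $p$; or equivalently exploit the Iwanaga--Gorenstein property (which you invoke at the outset but never actually use) to characterize $\Gproj(A)$ as $\{M:\Ext^{>0}_A(M,A)=0\}$ and compute $\Ext^1(M(\str),A)$ directly. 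The exclusion of band modules via ``$\Ext^1_A(M_{\band},A)\ne 0$'' is likewise asserted without argument and is not obviously the right statement (a priori only some $\Ext^{i}$ need be nonzero); the syzygy route disposes of bands for free, since a band module is never a submodule of a projective module.
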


This result has been extended to the case of $R$ to be monomial, see \cite{CSZ2018}.
In \cite{LGH2024}, the authors provide another description of all G-projective modules over gentle algebra by using the marked surface of gentle algebra and reprove the theorem given by \cite{GR2005} that the self-injective dimension of gentle algebra is finite.

The Cohen$-$Macaulay$-$Auslander algebra (=CM-Auslander algebra for short) $R^{\CMA}$ of $R$
is introduced by Beligiannis in \cite{B2011}, which is defined as the opposite algebra
\[ R^{\CMA} = \bigg(\End_R\bigg(\bigoplus_{G \in \ind(\Gproj(R))} G\bigg)\bigg)^{\op}, \]
where $\Gproj(R)$ is the set of all G-projective $R$-modules (up to isomorphism).
In \cite{CL2017,CL2019,LZhang2023CM-Auslander}, the authors considered the CM-Auslander algebras for gentle, skew-gentle, and string algebras, and used them to study the representation types of algebras.

Let $A=\kk\Q/\I$ be a gentle algebra. Now we recall a method for computing the bound quiver of the CM-Auslander algebra of gentle algebra that is given by Chen and Lu in \cite[Section 3]{CL2019}.
We define $(\Q^{\CMA},\I^{\CMA})$ as the bound quiver decided by
the quiver $\Q^{\CMA}=(\Q^{\CMA}_0, \Q^{\CMA}_1, \source^{\CMA}, \target^{\CMA})$ and the ideal $\I^{\CMA}$, which are obtained by the following construction.
\begin{itemize}
  \item[(1)] The set of vertices is $\Q_0^{\CMA} = \Q_0 \bigcup \ind(\Gproj(A))$.
  \item[(2)] The set of arrows is  $\Q_1^{\CMA} = \Q_1^{\cyc,+}\cup\Q_1^{\cyc,-}\cup \Q_1^{\ncyc}$, where:
    \begin{itemize}
      \item $\Q_1^{\cyc}$ is the set of all arrows on forbidden cycles;
      \item $\Q_1^{\cyc,+} = \{\alpha^+ \mid \alpha\in\Q_1^{\cyc}\}$;
      \item $\Q_1^{\cyc,-} = \{\alpha^- \mid \alpha\in\Q_1^{\cyc}\}$;
      \item $\Q_1^{\ncyc} = \Q_1\backslash\Q_1^{\cyc}$.
    \end{itemize}
  \item[(3)] For any $a\in\Q_1^{\CMA}$, we have
    \[
       \source^{\CMA}(a) =
       \begin{cases}
        \source(a), & a \in \Q_1^{\ncyc};  \\
        \source(\alpha), & a = \alpha^{-} \in \Q_1^{\cyc,-}; \\
        \alpha A, & a = \alpha^{+} \in \Q_1^{\cyc,+}.
       \end{cases}
    \]
  \item[(4)] For any $b\in\Q_1^{\CMA}$, we have
       \[
       \target^{\CMA}(b) =
       \begin{cases}
        \target(b), & b \in \Q_1^{\ncyc};  \\
        \target(\beta), & b = \beta^{+} \in \Q_1^{\cyc,+}; \\
        \alpha A, & b = \alpha^{-} \in \Q_1^{\cyc,-}.
       \end{cases}
       \]

  \item[(5)] the set of relations $\I^{\CMA}:= \{\beta^+ \alpha ^- \; | \; \beta\alpha\in \I \text{ with } \alpha,\beta\in \Q_1^{\cyc}\}\bigcup \{\beta \alpha  \; | \; \beta\alpha\in \I \text{ with } \alpha,\beta\in \Q_1^{\ncyc}\}$.
\end{itemize}

The following theorem was first proved by Chen and Lu.

\begin{theorem}[{\!\!\cite[Theorem 3.5]{CL2019}}] \label{thm:CL}
Let $A=\kk\Q/\I$ be a gentle algebra and $A^{\CMA}$ be its CM-Auslander algebra.
Then $A^{\CMA} \cong \kk\Q^{\CMA}/\I^{\CMA}$ is a gentle algebra.
\end{theorem}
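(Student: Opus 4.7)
The plan is to construct an explicit isomorphism $A^{\CMA} \cong \kk\Q^{\CMA}/\I^{\CMA}$ by identifying the Gabriel quiver with relations of the basic algebra $\End_A(M)^{\op}$, where $M = \bigoplus_{G \in \ind(\Gproj(A))} G$, and then verifying that the resulting bound quiver satisfies the gentle axioms. The first step is to enumerate the indecomposable G-projectives via Theorem \ref{thm:Kalck Gproj}: they are the indecomposable projectives $\{\e_v A\}_{v \in \Q_0}$ together with the modules $\{\alpha A\}_{\alpha \in \Q_1^{\cyc}}$, which gives a natural bijection with $\Q_0^{\CMA}$. Each $\alpha A$ is an indecomposable string module with simple top at $\target\alpha$, sitting inside $\e_{\source\alpha} A$ as the submodule generated by $\alpha$.

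Next I would identify the irreducible morphisms in $\operatorname{add} M$ and match them with the arrows of $\Q^{\CMA}$. For a non-cyclic arrow $\beta \in \Q_1^{\ncyc}$, the map $\e_{\target\beta} A \to \e_{\source\beta} A$ sending $\e_{\target\beta} \mapsto \beta$ is irreducible and records the arrow $\beta$ of $\Q^{\CMA}$. For $\alpha = a_i$ on a forbidden cycle $a_0 a_1 \cdots a_{\ell-1}$, the quotient $\e_{\target\alpha} A \twoheadrightarrow \alpha A$, $\e_{\target\alpha} \mapsto \alpha$, whose kernel is generated by $a_{i+1}$, provides $\alpha^+$; the inclusion $\alpha A \hookrightarrow \e_{\source\alpha} A$ provides $\alpha^-$. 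Composing these in $\modcat A$ yields $\alpha A \hookrightarrow \e_{\source\alpha} A = \e_{\target\beta} A \twoheadrightarrow \beta A$, which sends $\alpha \mapsto \beta\alpha$ and hence vanishes exactly when $\beta\alpha \in \I$ with both $\alpha, \beta$ cyclic; this produces the generating relations $\beta^+ \alpha^-$ of $\I^{\CMA}$. The non-cyclic relations $\beta\alpha \in \I$ persist verbatim, and no mixed case arises: if $\beta\alpha \in \I$ with $\alpha$ cyclic, gentleness forces $\beta$ to be the unique predecessor of $\alpha$ on the same forbidden cycle, hence also cyclic.

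The four gentle conditions for $(\Q^{\CMA}, \I^{\CMA})$ are then verified by a finite local check at each vertex: inserting $\alpha A$ in the middle of every forbidden cycle splits the cycle into two length-one paths $\alpha^-$ and $\alpha^+$, reorganizing the in- and out-degrees at $\source\alpha$ and $\target\alpha$ while preserving the bounds \emph{at most two in, at most two out} and \emph{at most one continuation, at most one forbidden continuation} at every vertex.

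The main obstacle is the completeness half of Step 2: ruling out any further irreducible morphisms between the modules $\alpha A$ and $\e_v A$ beyond those listed, especially between distinct modules $\alpha A$ and $\alpha' A$. The key input is a combinatorial description of $\Hom$-spaces between string modules applied to the submodules $\alpha A$ generated by arrows on forbidden cycles; using this, one checks that any candidate irreducible map factors through one of the modules corresponding to an already-listed arrow, and is therefore not irreducible. Once Step 2 is settled, the relation-matching in Step 3 and the gentleness verification in Step 4 reduce to direct local computations.
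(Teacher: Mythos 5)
The paper does not prove this statement at all: Theorem \ref{thm:CL} is quoted verbatim from \cite[Theorem 3.5]{CL2019}, and the construction of $(\Q^{\CMA},\I^{\CMA})$ preceding it is likewise imported from \cite[Section 3]{CL2019}. So there is no in-paper argument to compare against; your proposal has to be judged against what a complete proof would require. Your overall strategy is the right one and is, in substance, the one used in the cited source: list the indecomposable Gorenstein-projectives via Theorem \ref{thm:Kalck Gproj}, exhibit the maps realizing $\beta$, $\alpha^{-}$, $\alpha^{+}$, check that the composites $\beta^{+}\alpha^{-}$ and the old non-cyclic relations vanish, observe that no mixed relation $\beta\alpha\in\I$ with exactly one of $\alpha,\beta$ cyclic can occur (your gentleness argument for this is correct), and finish with a local degree count at each vertex. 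The identification of $\alpha A$ as the string submodule of $\e_{\source\alpha}A$ generated by $\alpha$, with simple top at $\target\alpha$, and the factorization of the old arrow $\alpha$ as $\alpha^{-}\alpha^{+}$ through the surjection $\e_{\target\alpha}A\twoheadrightarrow\alpha A$ and the inclusion $\alpha A\hookrightarrow\e_{\source\alpha}A$, are all correct.

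The genuine gap is the one you half-acknowledge, and it is actually two linked verifications, not one. First, completeness of the arrow set: you must show that $\rad(M_i,M_j)/\rad^2(M_i,M_j)$ is spanned exactly by the listed maps, i.e.\ that there are no further irreducible morphisms in $\mathrm{add}\,M$ --- in particular none between two non-projective summands $\alpha A$ and $\alpha' A$, and that the old arrow maps $\e_{\target\beta}A\to\e_{\source\beta}A$ for non-cyclic $\beta$ do not accidentally factor through some $\alpha A$ (which would delete an arrow rather than add one). Second, and not mentioned in your outline, exactness of the relation set: Step 3 only shows $\I^{\CMA}$ is \emph{contained} in the kernel of $\kk\Q^{\CMA}\to\End_A(M)^{\op}$; one still needs the reverse containment, e.g.\ by checking that the paths of $\Q^{\CMA}$ not lying in $\I^{\CMA}$ map to linearly independent homomorphisms, or by a dimension count $\dim_\kk\End_A(M)=\dim_\kk\kk\Q^{\CMA}/\I^{\CMA}$. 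Both points are settled simultaneously by the standard basis of graph maps between string modules, which is the tool you gesture at; but as written the proposal invokes it without carrying out the computation, and that computation is precisely the mathematical content of the theorem. Deferring it leaves the proof incomplete.
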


In Section \ref{Sect:BD CMA}, we provide a method to draw the marked surface of $A^{\CMA}$.

\section{Marked surfaces of BD/CM-Auslander algebras}
\label{Sect:BD CMA}

In this section, we provide a method for drawing the marked surfaces of BD-gentle algebras and CM-Auslander algebras.

\subsection{Some notations in marked surfaces}
For a marked surface $\bfS = (\Surf, \Marked, \rAS) = (\Surf, \Marked, \bAS)$,
if it has a $\rbullet$-marked point $M$ in $\rpunc$, then $M$ can be seen as a special boundary component without $\rbullet$-marked points.
In this case, if we ignore $\rbullet$-FFAS, and the gentle algebra corresponding to $\bfS$ has an infinite global dimension,
then $\rbullet$-FFAS cuts $\Surf$ to get at least one $\bbullet$-elementary polygon of the form shown in \Pic \ref{fig:infty poly}.
This polygon is called an {\defines $\infty-$elementary polygon}.
In \cite{LGH2024}, the authors show that the global dimension of a gentle algebra $A$ is infinite if and only if the marked surface of $A$ has at least one $\infty-$elementary polygon.

\begin{figure}[htbp]
\centering
\begin{tikzpicture}
\foreach \x in {0, 60, 120, ..., 300}
\draw[blue][rotate = \x] ( 1.73,-1.00) -- ( 1.73, 1.00) [line width=1pt];
\foreach \x in {0, 60, 120, ..., 300}
\fill[blue][rotate = \x] ( 1.73,-1.00) circle(1mm);
\foreach \x in {0, 60, 120, ..., 300}
\draw[red][rotate = \x][line width=1pt] (0,0)--(2,0);
\draw[red][fill= white][line width=1pt] (0,0) circle(1mm);
\end{tikzpicture}
\ \
\begin{tikzpicture}
\draw[white] (0,-2) -- (0,2);
\draw (0,0) node{$\Longrightarrow$};
\end{tikzpicture}
\ \
\begin{tikzpicture}
\foreach \x in {0, 60, 120, ..., 300}
\draw[blue][rotate = \x] ( 1.73,-1.00) -- ( 1.73, 1.00) [line width=1pt];
\foreach \x in {0, 60, 120, ..., 300}
\fill[blue][rotate = \x] ( 1.73,-1.00) circle(1mm);
\foreach \x in {0, 60, 120, ..., 300}
\draw[red!50][rotate = \x][line width=1pt][dotted] (0,0)--(2,0);
\draw[black][fill= black!30][line width=1pt] (0,0) circle(5mm);
\end{tikzpicture}
\caption{$\infty-$elementary polygon}
\label{fig:infty poly}
\end{figure}
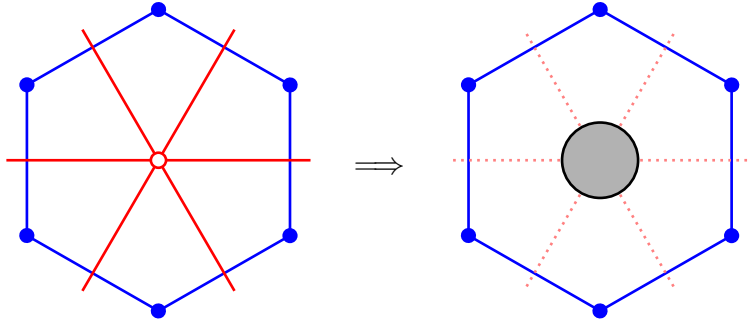

The triple $(\Surf, \Marked, \rAS)$ is used to describe the derived category of gentle algebra in the works of Opper$-$Plamondon$-$Schroll \cite{OPS2018},
and the triple $(\Surf, \Marked, \bAS)$ is used to describe the module category of gentle algebra in the works of Baur$-$Coelho-Sim\~{o}es \cite{BCS2021}.
The $\rbullet$-FFAS $\rAS$ and $\bbullet$-FFAS $\bAS$ are mutually and uniquely determined from each other,
and then we always write the marked surface as $\bfS=(\Surf,\Marked,\AS)$ in this paper.
Here, $\AS := \rAS \cup \bAS$ (for the case of ignoring $\rAS$, we take $\AS := \rAS$).
By Theorem \ref{thm:corresp}, any marked surface can be seen as a geometric model of some gentle algebra $A=\kk\Q/\I$.
Thus, for simplification, we use $\bfS_A=(\Surf_A, \Marked_A, \AS_A)$ to represent it.
In contrast, for a marked surface $\bfS$, we use $A(\bfS)$ to represent the gentle algebra that corresponds to it.

The $\bbullet$-FFAS of a marked surface $\bfS$ divides the surface $\Surf$ into some polygons.
These polygons are called {\defines $\bbullet$-elementary polygons}.
The vertices of all $\bbullet$-elementary polygons are $\bbullet$-marked points.
We can define {\defines $\rbullet$-elementary polygon} in a dual way.

Now, we define a relation ``$\simeq_{\AS}$'' on $\AS$ such that
$c\simeq_{\AS} c'$ if and only if one of the following conditions holds:
\begin{itemize}
  \item $c=c'$;
  \item $c \in \rAS, c'\in \bAS, c\cap c' \cap \Surf^{\circ} \ne \varnothing$;
  \item $c \in \bAS, c'\in \rAS, c\cap c' \cap \Surf^{\circ} \ne \varnothing$.
\end{itemize}
Then ``$\simeq_{\AS}$'' is an equivalence relation, and each equivalence class in $\AS$
is a of the form $\{\redanoind, \blueanoind\}$.
If two arcs $\redanoind$ and $\blueanoind$ are in the same equivalence class,
we say $\redanoind = \blueanoind^{\perp}$ and $\blueanoind = \redanoind^{\perp}$.
If $\rAS$ is ignored, then admits that ``$\simeq_{\AS}$'' is trivial.

The equivalence relation ``$\simeq_{\AS}$'' induces a bijection
\[ \av : \AS/\simeq_{\AS} \ = \{ \{\redanoind, \blueanoind\} \mid \redanoind\in\rAS \} \to \Q_0 \]
from $\AS/\simeq_{\AS}$ to $\Q_0$ (if $\rAS$ is ignored, then $\AS/\simeq_{\AS} = \AS$).
Naturally, the above bijection $\av$ can be seen as a map from $\rAS$ to $\Q_0$ or a map from $\bAS$ to $\Q_0$.
To be precise, for any $\redanoind$ (resp., $\blueanoind$), we have
\[ \av(\redanoind) = \av(\{\redanoind, \redanoind^{\perp}\})
 \ (\text{resp. } \av(\blueanoind) = \av(\{\blueanoind^{\perp}, \redanoind\} )\]
in the above perspective.

\subsection{Marked surfaces of BD gentle algebras}

For each gentle algebra $A$, we can draw the marked surface $\bfS_{\BD(A)}$ of its BD-gentle algebra $\BD(A)$ by Theorem \ref{thm:BD(gent)}. To do this, we construct a marked surface $\BD(\bfS)$ for each $\bfS$ by the following steps.

\begin{construction} \label{const:BDsurface} \rm \

Step 1. Consider the normalization $H=H_1\times\cdots\times H_s$ of $A=A(\bm{m},\simeq_{\gentleSet})$,
then for each $1\=< i\=< s$, the quiver $\Q_{H_i}$ of $H_i$ is
\[ \bbA_{m_i} = \ \ (i,1)\rightarrow (i,2)\rightarrow \cdots \rightarrow (i,m_i), \]
and its marked ribbon surface $\Surf_{H_i}=(\Surf_{H_i},\Marked_{H_i},\AS_{H_i})$ can be obtained by $H_i\cong \pmbA_{m_i}$, see the first picture of \Pic \ref{fig:Hi MS}.
\begin{figure}
\centering
\begin{tikzpicture}[scale = 1.25]
\draw[line width=1pt] (0,0) circle(2cm);
% open arcs
\draw[line width=1pt] ( 0.00, 2.00) -- ( 2.00, 0.00)[blue];
\draw[line width=1pt] ( 0.00, 2.00) -- ( 1.41,-1.41)[blue];
\draw[line width=1pt] ( 0.00, 2.00) -- ( 0.00,-2.00)[blue!25][dotted];
\draw[line width=1pt] ( 0.00, 2.00) -- (-1.41,-1.41)[blue];
\draw[line width=1pt] ( 0.00, 2.00) -- (-2.00, 0.00)[blue];
\draw[blue][rotate around={ -2*\rotateangle:(0,2)}] (1.5,2)
  node{\tiny\rotatebox{-45}{$(i,1)$}};
\draw[blue][rotate around={ -3*\rotateangle:(0,2)}] (1.5,2)
  node{\tiny\rotatebox{-70}{$(i,2)$}};
\draw[blue][rotate around={ -4.75*\rotateangle:(0,2)}] (1.5,2)
  node{\tiny\rotatebox{-113}{$(i,m_i-1)$}};
\draw[blue][rotate around={ -5.65*\rotateangle:(0,2)}] (1.5,2)
  node{\tiny\rotatebox{-132}{$(i,m_i)$}};
% closed arcs
\foreach \x in {0,-45,-135,-180}
\draw[line width=1pt][rotate=\x]  ( 1.84, 0.74) -- ( 1.84,-0.74) [red];
\draw[line width=1pt][rotate=-90] ( 1.84, 0.74) -- ( 1.84,-0.74) [red!50][dotted];
% open marked points
\fill[blue]    ( 0.00, 2.00) circle(\pointsize);
\fill[blue]    ( 2.00, 0.00) circle(\pointsize);
\fill[blue]    ( 1.41,-1.41) circle(\pointsize);
\fill[blue!25] ( 0.00,-2.00) circle(\pointsize);
\fill[blue]    (-2.00, 0.00) circle(\pointsize);
\fill[blue]    (-1.41,-1.41) circle(\pointsize);
% closed marked points
\foreach \x in {157.5,22.5,-22.5,-67.5,-157.5}
\draw[red][line width = 1pt][fill=white][rotate= \x] (2,0) circle(\pointsize);
\draw[red!50][line width = 1pt][fill=white][rotate=-90-22.5] (2,0) circle(\pointsize);
\end{tikzpicture}
\ \ \ \
\begin{tikzpicture}[scale = 1.25]
\foreach \x in {0,-45,...,-180}
\draw[black!25][fill=black!25][rotate=\x]
  ( 1.96, 0.38) to[out=  180, in=  180]
  ( 1.96,-0.38) to[out=78.75, in=-78.75]
  ( 1.96, 0.38);
\draw[line width=1pt] (0,0) circle(2cm);
% open arcs
\draw[line width=1.0pt] ( 0.00, 2.00) -- ( 2.00, 0.00)[blue];
\draw[line width=0.5pt] ( 0.00, 2.00) -- ( 1.84,-0.74)[blue][dashed];
\draw[line width=1.0pt] ( 0.00, 2.00) -- ( 1.41,-1.41)[blue];
\draw[line width=0.5pt] ( 0.00, 2.00) -- ( 0.74,-1.84)[blue][dashed];
\draw[line width=1.0pt] ( 0.00, 2.00) -- ( 0.00,-2.00)[blue!25][dotted];
\draw[line width=0.5pt] ( 0.00, 2.00) -- (-0.74,-1.84)[blue!25][dashed];
\draw[line width=1.0pt] ( 0.00, 2.00) -- (-1.41,-1.41)[blue];
\draw[line width=0.5pt] ( 0.00, 2.00) -- (-1.84,-0.74)[blue][dashed];
\draw[line width=1.0pt] ( 0.00, 2.00) -- (-2.00, 0.00)[blue];
\draw[line width=0.5pt] ( 0.00, 2.00) -- (-1.84, 0.74)[blue][dashed];
\draw[blue][rotate around={ -2.4*\rotateangle:(0,2)}] (2,2)
  node{\tiny\rotatebox{-62}{$(i,1)'$}};
\draw[blue][rotate around={ -3.35*\rotateangle:(0,2)}] (2,2)
  node{\tiny\rotatebox{-85}{$(i,2)'$}};
\draw[blue][rotate around={ -5.11*\rotateangle:(0,2)}] (2,2)
  node{\tiny\rotatebox{-123}{$(i,m_i-1)'$}};
\draw[blue][rotate around={ -6.05*\rotateangle:(0,2)}] (1.25,2)
  node{\tiny\rotatebox{-147}{$(i,m_i)'$}};
% closed arcs
\foreach \x in {0,-45,-135,-180}
\draw[red][rotate=\x][line width=1pt] ( 1.96, 0.38) to[out=180,in=180] ( 1.96,-0.38);
\foreach \x in {-22.5,-67.5,-157.5,-202.5}
\draw[red][rotate=\x][line width=1pt] ( 1.96, 0.38) to[out=180,in=180] ( 1.96,-0.38)[dashed];
\draw[red!50][rotate=-112.5][line width=1pt] ( 1.96, 0.38) to[out=180,in=180] ( 1.96,-0.38)[dashed];
\draw[red!50][rotate= -90.0][line width=1pt] ( 1.96, 0.38) to[out=180,in=180] ( 1.96,-0.38)[dotted];
% open marked points
\fill[blue]    ( 0.00, 2.00) circle(\pointsize);
\fill[blue]    ( 2.00, 0.00) circle(\pointsize);
\fill[blue]    ( 1.41,-1.41) circle(\pointsize);
\fill[blue!25] ( 0.00,-2.00) circle(\pointsize);
\fill[blue]    (-2.00, 0.00) circle(\pointsize);
\fill[blue]    (-1.41,-1.41) circle(\pointsize);
\foreach \x in {157.5,-22.5,-67.5,-157.5}
\fill[blue][line width = 1pt][rotate= \x] (2,0) circle(\pointsize);
\fill[blue!25][line width = 1pt][rotate=-90-22.5] (2,0) circle(\pointsize);
% closed marked points
\foreach \x in {11.25,-11.25,-33.75,-56,-78.5,-146,-168.5,-191.25,-213.75}
\draw[red][line width = 1pt][fill=white][rotate= \x] (2,0) circle(\pointsize);
\draw[red!50][line width = 1pt][fill=white][rotate=-101] (2,0) circle(\pointsize);
\draw[red!50][line width = 1pt][fill=white][rotate=-123.5] (2,0) circle(\pointsize);
\end{tikzpicture}
\caption{The marked surfaces of $H_i$ and $\BD(H_i)$}
\begin{center}
  (each $\bbullet$-arc lying in $\av^{-1}((i,j))$ is marked ``$(i,j)$'' in this figure,

  dashed lines are newly added arcs when changing from $H_i$ to $\BD(H_i)$,

  and the dotted line represents some omitted arcs.)
\end{center}
\label{fig:Hi MS}
\end{figure}

Step 2. For $(i,j)\in \gentleSet(\bm{m})$, if there exists $(k,l)\in \gentleSet(\bm{m})$
  such that $(i,j)\simeq_{\gentleSet} (k,l)$,
  then we consider the marked surface $\Surf_{\BD(H_i)}$ which is shown in the second picture of \Pic \ref{fig:Hi MS}.
  Here, we have $\BD(H_i) \cong \pmbA_{2m_i}$ by Lemma \ref{lemm:BD(Am)},
  and the quiver $\Q_{\BD(H_i)}$ of $\BD(H_i)$ is
    \[ \bbA_{2m_i} = \ \ (i,1)'\rightarrow (i,1)\rightarrow (i,2)' \rightarrow (i,2) \rightarrow \cdots \rightarrow (i,m_i)' \rightarrow (i,m_i), \]
  Removing the elementary 2-gon with $\rbullet$-arc $(\av^{-1}(i,j))^\perp$ as an edge of it (see the shadow parts in the second picture of \Pic \ref{fig:Hi MS}),
  we obtain a surface $\Surf^{\rm{inc}}_{\BD(H_i)}$ which is said to be {\defines incomplete}.

Step 3. For all $(i,j) \simeq_{\gentleSet} (k,l)\in \gentleSet(\bm{m})$,
gluing the $\rbullet$-arc $(\av^{-1}(i,j))^\perp$ in $\bfS^{\rm{inc}}_{\BD(H_i)}$ and $\rbullet$-arc $(\av^{-1}(k,l))^\perp$ in $\bfS^{\rm{inc}}_{\BD(H_i)}$,
we obtain a new surface $\BD(\bfS)$.
\end{construction}

\begin{proposition} \label{prop:BDsurface} \
\begin{itemize}
  \item[\rm(1)] Let $A$ be a gentle algebra and $\bfS_A$ be its marked surface.
    Then $\bfS_{\BD(A)}$ is homotopic to $\BD(\bfS_A)$.
  \item[\rm(2)] Let $\bfS$ be a marked surface and $A(\bfS)$ be its gentle algebra.
    Then $A(\BD(\bfS)) \cong \BD(A(\bfS))$.
\end{itemize}
\end{proposition}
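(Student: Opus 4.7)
The plan is to prove (1) and (2) simultaneously by invoking the bijection in Theorem \ref{thm:corresp}. Since this bijection is between isoclasses of gentle algebras and homotopy classes of marked surfaces, it suffices to establish the single identity
\[ A(\BD(\bfS_A)) \cong \BD(A). \]
Indeed, once this is proved, applying $A(-)$ to both sides and using $A(\bfS_A) \cong A$ yields (2), while applying $\bfS_{(-)}$ to both sides yields (1) up to homotopy equivalence of marked surfaces.

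To verify the key identity I would follow Construction \ref{const:BDsurface} step by step and track how each geometric step corresponds to an algebraic operation from Theorem \ref{thm:BD(gent)}. In Step 1, the decomposition of $\bfS_A$ into the pieces $\bfS_{H_i}$ matches the factorization $A = \prod_{i=1}^s H_i / \langle E_{jj}^{(i)} - E_{\jmath\jmath}^{(\imath)} \mid (i,j) \simeq_{\gentleSet} (\imath,\jmath) \rangle$; each $H_i \cong \pmbA_{m_i}$ corresponds to the polygon $\redP(\pmbA_{m_i})$ by the $\pmbA_m$-case of Theorem \ref{thm:corresp}. In Step 2, replacing $\bfS_{H_i}$ by $\bfS_{\BD(H_i)}$ is justified by Lemma \ref{lemm:BD(Am)}, which gives $\BD(H_i) \cong \pmbA_{2m_i}$, so the polygon $\redP(\pmbA_{2m_i})$ is exactly the marked surface of the $i$-th factor of $\BD(H_1)\times\cdots\times\BD(H_s)$. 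Finally, in Step 3, gluing the $\rbullet$-arc $(\av^{-1}(i,j))^\perp$ to $(\av^{-1}(\imath,\jmath))^\perp$ whenever $(i,j) \simeq_{\gentleSet} (\imath,\jmath)$ implements the algebraic identification $X(i)_{jj} = X(\imath)_{\jmath\jmath}$ in the quotient formula of Theorem \ref{thm:BD(gent)}: under the bijection $\av$ each glued pair of $\rbullet$-arcs represents precisely the pair of diagonal matrix entries that are identified.

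The routine part is bookkeeping: for each $1 \leqslant i \leqslant s$ and each $1 \leqslant j \leqslant m_i$, check that the vertex of $\BD(A)$ corresponding to $X(i)_{jj}$ in Theorem \ref{thm:BD(gent)} is assigned, under $\av$, to the correct equivalence class of arcs in $\BD(\bfS_A)$, and that the arrows of $\BD(A)$ (i.e., the unit lower-triangular entries of $T_{2m_i}$) correspond to the clockwise interior angles between consecutive $\rbullet$-arcs of $\redP(\pmbA_{2m_i})$, exactly as in Figure \ref{fig:polygon}. This is straightforward given Lemma \ref{lemm:BD(Am)}.

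The main obstacle, and where care is required, is justifying that Step 2's passage to an \emph{incomplete} surface $\bfS^{\inc}_{\BD(H_i)}$ followed by the gluing in Step 3 produces a well-defined marked surface rather than introducing spurious boundary components or removing the $\bbullet$-marked points needed for the $\bbullet$-FFAS of $\BD(A)$. To handle this I would argue that the removed elementary $2$-gons occur in matching pairs dictated by $\simeq_{\gentleSet}$, so every deletion in $\bfS^{\inc}_{\BD(H_i)}$ is compensated by a deletion in $\bfS^{\inc}_{\BD(H_\imath)}$ whose boundary is then identified with the former during gluing; consequently no new boundary component appears. Once the gluing is shown to be consistent, the resulting $\AS$ recovers the $\rbullet$- and $\bbullet$-FFAS prescribed by Definition \ref{def:QZZ} for the algebra $\BD(A)$, and Theorem \ref{thm:corresp} then delivers the desired homotopy equivalence in (1) and the isomorphism in (2).
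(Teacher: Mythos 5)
Your proposal is correct and follows essentially the same route as the paper: both arguments rest on the decomposition of $\BD(A)$ from Theorem \ref{thm:BD(gent)}, the identification $\BD(H_i)\cong\pmbA_{2m_i}$ from Lemma \ref{lemm:BD(Am)}, the structuring of the surfaces by the $\rbullet$-polygons $\redP(\pmbA_{2m_i})$, and the matching of the Step-3 gluings with the identifications $X(i)_{jj}=X(\imath)_{\jmath\jmath}$. The only difference is that you establish the algebra-side identity $A(\BD(\bfS_A))\cong\BD(A)$ (essentially statement (2)) first and transfer to (1) via the bijection of Theorem \ref{thm:corresp}, whereas the paper proves (1) directly by comparing the two surfaces and then derives (2); this reordering is harmless, and your extra check that the removed $2$-gons are paired by $\simeq_{\gentleSet}$ so that the gluing is well defined is a sound (if implicit in the paper) refinement.
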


\begin{proof}
We only prove (1); the statement (2) is a direct corollary of the statement (1) and Theorem \ref{thm:corresp}.

Assume that the normalization of $A$ is $H=H_1\times \cdots \times H_s$,
then the normalization of $\BD(A)$ is $\BD(H) = \BD(H_1)\times \cdots \times \BD(H_s)$,
see the isomorphism (\ref{formula:BD(gent)}) given in Theorem \ref{thm:BD(gent)}.
Then, by Definition \ref{def:QZZ} and Theorem \ref{thm:corresp},
$\bfS_A$ is structured by all $\rbullet$-elementary polygons $\redP(H_i)$ ($1\=< i\=< s$)
(take $m=m_i$ and $\pmbA_m=H_i$ in \Pic \ref{fig:polygon}),
and $\bfS_{\BD(A)}$ is structured by all $\rbullet$-elementary polygons $\redP(\BD(H_i))=\redP(\pmbA_{2m_i})$.
Here, $\redP(\BD(H_i))$ is shown in \Pic \ref{fig:BD(polygon)}.
\begin{figure}[htbp]
\centering
\begin{tikzpicture}[scale=1.25]
\draw (-1,-1.73) -- (1,-1.73) [line width = 1.5pt];
\foreach \x in {-30,30,150,210}
\draw[ red ][line width = 1pt][rotate=  \x] (2,0) -- (1.73,1);
\foreach \x in {-60,0,120,180}
\draw[ red ][line width =.5pt][rotate=  \x] (2,0) -- (1.73,1) [dashed];
\draw[ red ][line width = 1pt][rotate=  60] (2,0) -- (1.73,1) [dotted];
\draw[ red ][line width = 1pt][rotate=  90] (2,0) -- (1.73,1) [dotted];
\filldraw[red][line width = 1pt][fill=white][rotate=  0] (2,0) circle(1mm);
\foreach \x in {-60,-30,...,240}
\draw[red][line width = 1pt][fill=white][rotate= \x] (2,0) circle(1mm);
%\foreach \x in {-3,-2,...,3} \draw[-*] (\x,0)--++(0,1);
%
\draw[rotate=  30] (0.5,-2.2) node{$\reda{1'}{P}$};
\draw[rotate=  60] (0.5,-2.2) node{$\reda{1}{P}$};
\draw[rotate=  90] (0.5,-2.2) node{$\reda{2'}{P}$};
\draw[rotate= 120] (0.5,-2.2) node{$\reda{2}{P}$};
\draw[rotate= 210] (0.6,-2.4) node{$\reda{(m_i-1)'}{P}$};
\draw[rotate= 240] (0.6,-2.4) node{$\reda{m_i-1}{P}$};
\draw[rotate= 270] (0.5,-2.2) node{$\reda{m_i'}{P}$};
\draw[rotate= 300] (0.5,-2.2) node{$\reda{m_i}{P}$};
\fill[blue] (0,-1.73) circle(1mm) node[below]{$\marked(\pmbA_m)$};
\end{tikzpicture}
\caption{The polygon $\redP(\pmbA_m)$ corresponding to $\pmbA_m$}
\label{fig:BD(polygon)}
\end{figure}
In this case, for each edge $\redanoind \in \AS_{\BD(A)}$ of $\redP(\BD(H_i))$,
if for any $\rbullet$-arc $\redanoind'$ such that \
$\redanoind' \ne \redanoind'$ and $\av^{-1}(\redanoind) \simeq_{\gentleSet} \av^{-1}(\redanoind')$,
then we construct an elementary $2$-gon which has only one edge $\mathsf{w}_{\rbullet}$ such that
$\mathsf{w}_{\rbullet}$ is a $\rbullet$-arc and $\redanoind \simeq_{\AS} \mathsf{w}_{\rbullet}$.
We obtain an incomplete surface which is homotopic to $\bfS_{\BD(H_i)}^{\inc}$,
and $\bfS_{\BD(A)}$ is the marked surface obtained by gluing all incomplete surfaces as above.
By construction \ref{const:BDsurface}, we see that $\BD(\bfS_A)$ and $\bfS_{\BD(A)}$ are homotopic.
\end{proof}

\begin{example} \rm \label{exp:BD-alg}
Let $\bm{m} = (2, 3)$, then $\gentleSet(\bm{m})=\{(1,1), (1,2), (2,1), (2,2), (2,3)\}$.
Suppose that the symmetric relation ``$\simeq_{\gentleSet}$'' is given by $(1,1)\simeq_{\gentleSet} (2,2)$,
then we have two path algebras $H_1 \cong \pmbA_2 $ and $H_2 = \pmbA_3$ whose quivers are
\[ (1,1) \To{\alpha}{} (1,2) \]
and
\[ (2,1) \To{\beta}{} (2,2) \To{\gamma}{} (2,3),  \]
and the marked surfaces $\bfS_{\BD(H_1)}$ and $\bfS_{\BD(H_2)}$
are shown in \Pic \ref{fig:BD-alg} I and II, respectively.
\begin{figure}[htbp]
% surface DB(H_1)
\begin{tikzpicture}[scale=1.3]
\fill [left color = cyan!50] (-1.42,-0.47) arc(198:-18:1.5);
\draw [cyan]    ( 0.00, 0.30) node[right]{$\bfS_{\BD(H_1)}^{\inc}$};
\draw [line width=1.5pt] (0,0) circle (1.5cm);
\draw [blue] (0,1.5)-- (-1.5, 0   ) [line width=.5pt][dashed];
\draw [blue] (0,1.5)-- ( 1.5, 0   ) [line width=.5pt][dashed];
\draw [blue] (0,1.5)-- ( 0  ,-1.5 ) [line width= 1pt];
\draw [blue] (0,1.5)-- (1.11, 1.01) [line width= 1pt];
\draw [red] (-1.11, 1   )-- (-1.42,-0.47) [line width=.5pt][dashed];
\draw [red] (-1.42,-0.47)-- ( 1.42,-0.47) [line width= 1pt];
\draw [red] ( 0.5 , 1.42)-- ( 1.41, 0.51) [line width= 1pt];
\draw [red] ( 1.41, 0.51)-- ( 1.42,-0.47) [line width=.5pt][dashed];
\begin{scriptsize}
\fill [color=blue] ( 0  , 1.5) circle (2.0pt);
\fill [color=blue] (-1.5, 0  ) circle (2.0pt);
\fill [color=blue] ( 1.5, 0  ) circle (2.0pt);
\fill [color=blue] ( 0  ,-1.5) circle (2.0pt);
\fill [color=blue] ( 1.1, 1  ) circle (2.0pt);
\draw [color=red][fill=white] ( 1.41, 0.51) circle (2.0pt) [line width=1pt];
\draw [color=red][fill=white] ( 1.42,-0.47) circle (2.0pt) [line width=1pt];
\draw [color=red][fill=white] (-1.42,-0.47) circle (2.0pt) [line width=1pt];
\draw [color=red][fill=white] (-1.11, 1   ) circle (2.0pt) [line width=1pt];
\draw [color=red][fill=white] ( 0.5 , 1.42) circle (2.0pt) [line width=1pt];
\end{scriptsize}
\draw (0,-1.8) node{$\mathrm{I}.\ \bfS_{\BD(H_1)}$};
\end{tikzpicture}
\ \ \ \ \
% surface DB(H_2)
\begin{tikzpicture}[scale=1.3]
\fill [left color = green!50] (-1.42,-0.47) arc(198:342:1.5);
\draw [Green]    ( 0.00,-0.70) node[right]{$\bfS_{\BD(H_2)}^{\inc}$};
\draw [line width=1.5pt] (0,0) circle (1.5cm);
\draw [blue] (0,-1.5)-- ( 0  ,  1.5 ) [line width = 1pt];
\draw [blue] (0,-1.5)-- ( 1.11,-1   ) [line width = 1pt];
\draw [blue] (0,-1.5)-- (-1.12,-1   ) [line width = 1pt];
\draw [blue] (0,-1.5)-- (-1.32,-0.72) [line width =.5pt][dashed];
\draw [blue] (0,-1.5)-- ( 1.34,-0.68) [line width =.5pt][dashed];
\draw [blue] (0,-1.5)-- ( 0.7 ,-1.33) [line width =.5pt][dashed];
\draw [red] (-1.42,-0.47)-- (1.42,-0.47) [line width =1pt];
\draw [red] (-1.42,-0.47) to[out=   0, in=  45] (-1.22,-0.88)[line width =.5pt][dashed];
\draw [red] (-1.22,-0.88) to[out= -20, in= 135] (-0.47,-1.42)[line width = 1pt];
\draw [red] ( 0.47,-1.42) to[out=  80, in= 180] ( 0.93,-1.18)[line width =.5pt][dashed];
\draw [red] ( 0.93,-1.18) to[out=  90, in= 180] ( 1.23,-0.86)[line width = 1pt];
\draw [red] ( 1.23,-0.86) to[out=  90, in= 180] ( 1.42,-0.47)[line width =.5pt][dashed];
\begin{scriptsize}
\fill [color=blue] ( 0   ,-1.5 ) circle (2.0pt);
\fill [color=blue] ( 0   , 1.5 ) circle (2.0pt);
\fill [color=blue] ( 1.11,-1   ) circle (2.0pt);
\fill [color=blue] (-1.12,-1   ) circle (2.0pt);
\fill [color=blue] (-1.32,-0.72) circle (2.0pt);
\fill [color=blue] ( 1.34,-0.68) circle (2.0pt);
\fill [color=blue] ( 0.7 ,-1.33) circle (2.0pt);
\draw [color=red][fill=white] (-1.42,-0.47) circle (2.0pt)[line width =1pt];
\draw [color=red][fill=white] ( 1.42,-0.47) circle (2.0pt)[line width =1pt];
\draw [color=red][fill=white] (-0.47,-1.42) circle (2.0pt)[line width =1pt];
\draw [color=red][fill=white] (-1.22,-0.88) circle (2.0pt)[line width =1pt];
\draw [color=red][fill=white] ( 0.47,-1.42) circle (2.0pt)[line width =1pt];
\draw [color=red][fill=white] ( 0.93,-1.18) circle (2.0pt)[line width =1pt];
\draw [color=red][fill=white] ( 1.23,-0.86) circle (2.0pt)[line width =1pt];
\end{scriptsize}
\draw (0,-1.8) node{$\mathrm{II}.\ \Surf_{\BD(H_2)}$};
\end{tikzpicture}
\ \ \ \ \
% surface DB(A)
\begin{tikzpicture}[scale=1.3]
\fill [left color = cyan!50] (-1.42,-0.47) arc(198:-18:1.5);
\draw [cyan]    ( 0.00, 0.30) node[right]{$\bfS_{\BD(H_1)}^{\inc}$};
\fill [left color = green!50] (-1.42,-0.47) arc(198:342:1.5);
\draw [Green]    ( 0.00,-0.70) node[right]{$\bfS_{\BD(H_2)}^{\inc}$};
% 1
\draw [line width=1.5pt] (0,0) circle (1.5cm);
\draw [blue] (0,1.5)-- (-1.5 , 0   ) [line width =.5pt][dashed];
\draw [blue] (0,1.5)-- ( 1.5 , 0   ) [line width =.5pt][dashed];
\draw [blue] (0,1.5)-- ( 0   ,-1.5 ) [line width = 1pt][dashed];
\draw [blue] (0,1.5)-- ( 1.11, 1.01) [line width = 1pt];
\draw [red]  (-1.11, 1   )-- (-1.42,-0.47) [line width =.5pt][dashed];
\draw [red]  (-1.42,-0.47)-- ( 1.42,-0.47) [line width = 1pt];
\draw [red]  ( 0.5 , 1.42)-- ( 1.41, 0.51) [line width = 1pt];
\draw [red]  ( 1.41, 0.51)-- ( 1.42,-0.47) [line width =.5pt][dashed];
% 2
\draw [blue] (0,-1.5)-- ( 0   , 1.5 ) [line width = 1pt];
\draw [blue] (0,-1.5)-- ( 1.11,-1   ) [line width = 1pt];
\draw [blue] (0,-1.5)-- (-1.12,-1   ) [line width = 1pt];
\draw [blue] (0,-1.5)-- (-1.32,-0.72) [line width =.5pt][dashed];
\draw [blue] (0,-1.5)-- ( 1.34,-0.68) [line width =.5pt][dashed];
\draw [blue] (0,-1.5)-- ( 0.7 ,-1.33) [line width =.5pt][dashed] ;
\draw [red]  (-1.42,-0.47)-- (1.42,-0.47) [line width =1pt];
\draw [red] (-1.42,-0.47) to[out=   0, in=  45] (-1.22,-0.88)[line width =.5pt][dashed];
\draw [red] (-1.22,-0.88) to[out= -20, in= 135] (-0.47,-1.42)[line width = 1pt];
\draw [red] ( 0.47,-1.42) to[out=  80, in= 180] ( 0.93,-1.18)[line width =.5pt][dashed];
\draw [red] ( 0.93,-1.18) to[out=  90, in= 180] ( 1.23,-0.86)[line width =1pt];
\draw [red] ( 1.23,-0.86) to[out=  90, in= 180] ( 1.42,-0.47)[line width =1pt][dashed];
\begin{scriptsize}
% H_1
\fill [color=blue] ( 0   , 1.5 ) circle (2.0pt);
\fill [color=blue] (-1.5 , 0   ) circle (2.0pt);
\fill [color=blue] ( 1.5 , 0   ) circle (2.0pt);
\fill [color=blue] ( 0   ,-1.5 ) circle (2.0pt);
\fill [color=blue] ( 1.11, 1.01) circle (2.0pt);
\draw [color=red][fill=white]  ( 1.41, 0.51) circle (2.0pt) [line width =1pt];
\draw [color=red][fill=white]  ( 1.42,-0.47) circle (2.0pt) [line width =1pt];
\draw [color=red][fill=white]  (-1.42,-0.47) circle (2.0pt) [line width =1pt];
\draw [color=red][fill=white]  (-1.11, 1   ) circle (2.0pt) [line width =1pt];
\draw [color=red][fill=white]  ( 0.5 , 1.42) circle (2.0pt) [line width =1pt];
% H_2
\fill [color=blue] ( 0   ,-1.5 ) circle (2.0pt);
\fill [color=blue] ( 0   , 1.5 ) circle (2.0pt);
\fill [color=blue] ( 1.11,-1   ) circle (2.0pt);
\fill [color=blue] (-1.12,-1   ) circle (2.0pt);
\fill [color=blue] (-1.32,-0.72) circle (2.0pt);
\fill [color=blue] ( 1.34,-0.68) circle (2.0pt);
\fill [color=blue] ( 0.7 ,-1.33) circle (2.0pt);
\draw [color=red][fill=white]  (-1.42,-0.47) circle (2.0pt) [line width =1pt];
\draw [color=red][fill=white]  ( 1.42,-0.47) circle (2.0pt) [line width =1pt];
\draw [color=red][fill=white]  (-0.47,-1.42) circle (2.0pt) [line width =1pt];
\draw [color=red][fill=white]  (-1.22,-0.88) circle (2.0pt) [line width =1pt];
\draw [color=red][fill=white]  ( 0.47,-1.42) circle (2.0pt) [line width =1pt];
\draw [color=red][fill=white]  ( 0.93,-1.18) circle (2.0pt) [line width =1pt];
\draw [color=red][fill=white]  ( 1.23,-0.86) circle (2.0pt) [line width =1pt];
\end{scriptsize}
\draw (0,-1.8) node{$\mathrm{III}.\ \Surf_{\BD(A)}$};
\end{tikzpicture}
\label{fig:BD-alg}
\caption{The marked surfaces of $\BD(H_1)$, $\BD(H_2)$ and $\BD(A)$ given in Example \ref{exp:BD-alg}}
\end{figure}
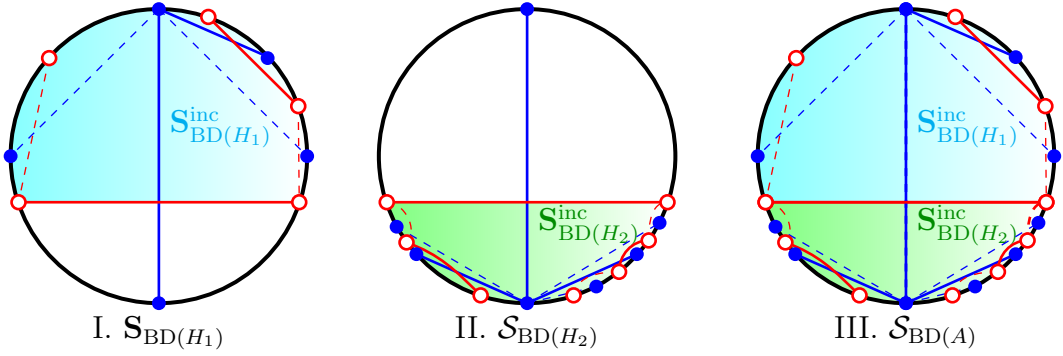
The marked surface $\bfS_{\BD(A)}$ can be obtained by gluing
the edge $\av^{-1}((1,1))^{\perp}$ of $\bfS_{\BD(H_1)}^{\inc}$
and the edge $\av^{-1}((2,2))^{\perp}$ of $\bfS_{\BD(H_2)}^{\inc}$, see \Pic \ref{fig:BD-alg}, III.
Then we obtain that the quiver of $\BD(A)$ is
\[\tiny\xymatrix@R=0.35cm@C=0.7cm{
(2,1)'\ar[r]& (2,1)\ar[r] & (2,2)'\ar[rd] & & (1,2)' \ar[r] & (1,2) \\
& & & (1,1)=(2,2) \ar[ru]\ar[rd] & &  \\
& & (1,1)' \ar[ru] & & (2,3)'\ar[r] & (2,3)
}\]
by using the correspondence between marked surfaces and gentle algebras in Theorem \ref{thm:corresp}.
\end{example}

Next, we provide an instance of a gentle algebra with infinite global dimension.

\begin{example} \rm \label{exp:gent gldim=infty}
Let $A=\kk\Q/\I$ be a gentle algebra whose bound quiver $(\Q,\I)$ and marked surface
are shown in the first picture of \Pic \ref{fig:gent gldim=infty}.
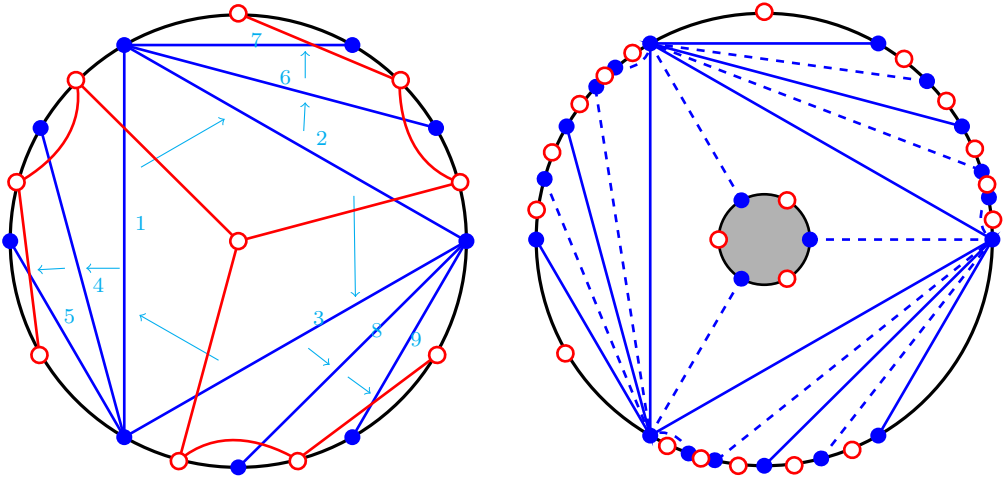
\begin{figure}[htbp]
\begin{center}
\begin{tikzpicture}[scale=2]
\draw [line width=1.2pt] (0,0) circle (1.5cm);
% arcs
\draw [blue] (-0.75, 1.30)-- (-0.75,-1.30) [line width =1pt];
\draw [blue] (-0.75,-1.30)-- ( 1.50, 0.00) [line width =1pt];
\draw [blue] ( 1.50, 0.00)-- (-0.75, 1.30) [line width =1pt];
\draw [blue] (-0.75, 1.30)-- ( 0.75, 1.30) [line width =1pt];
\draw [blue] ( 1.50, 0.00)-- ( 0.75,-1.30) [line width =1pt];
\draw [blue] (-0.75,-1.30)-- (-1.50, 0.00) [line width =1pt];
\draw [blue] ( 1.50, 0.00)-- ( 0.00,-1.50) [line width =1pt];
\draw [blue] (-0.75,-1.30)-- (-1.30, 0.75) [line width =1pt];
\draw [blue] (-0.75, 1.30)-- ( 1.30, 0.75) [line width =1pt];
\draw [->,cyan] ( 0.76, 0.30) -- ( 0.77,-0.37);
\draw [->,cyan] (-0.64, 0.49) -- (-0.09, 0.81);
\draw [->,cyan] (-0.13,-0.79) -- (-0.65,-0.49);
\draw [->,cyan] ( 0.43, 0.73) -- ( 0.44, 0.92);
\draw [->,cyan] ( 0.44, 1.08) -- ( 0.44, 1.26);
\draw [->,cyan] ( 0.46,-0.71) -- ( 0.60,-0.82);
\draw [->,cyan] ( 0.72,-0.90) -- ( 0.87,-1.01);
\draw [->,cyan] (-0.78,-0.18) -- (-1.00,-0.18);
\draw [->,cyan] (-1.14,-0.18) -- (-1.32,-0.19);
\draw [cyan] (-0.64, 0.12) node {\tiny$1$};
\draw [cyan] ( 0.55, 0.68) node {\tiny$2$};
\draw [cyan] ( 0.53,-0.51) node {\tiny$3$};
\draw [cyan] (-0.92,-0.29) node {\tiny$4$};
\draw [cyan] (-1.11,-0.50) node {\tiny$5$};
\draw [cyan] ( 0.31, 1.09) node {\tiny$6$};
\draw [cyan] ( 0.12, 1.33) node {\tiny$7$};
\draw [cyan] ( 0.91,-0.59) node {\tiny$8$};
\draw [cyan] ( 1.17,-0.65) node {\tiny$9$};
% points
\fill [blue] ( 1.50, 0.00) circle (1.5pt);
\fill [blue] ( 0.75, 1.30) circle (1.5pt);
\fill [blue] (-0.75, 1.30) circle (1.5pt);
\fill [blue] (-1.50, 0.00) circle (1.5pt);
\fill [blue] (-0.75,-1.30) circle (1.5pt);
\fill [blue] ( 0.75,-1.30) circle (1.5pt);
\fill [blue] ( 0.00,-1.50) circle (1.5pt);
\fill [blue] (-1.30, 0.75) circle (1.5pt);
\fill [blue] ( 1.30, 0.75) circle (1.5pt);
\foreach \x in {45,165,285}
\draw [red][rotate = \x]  ( 0.00, 0.00) -- ( 0.00, 1.51) [line width=1pt];
\foreach \x in {0,-120,-240}
\draw [red][rotate = \x]  ( 0.00, 1.51) -- ( 1.06, 1.06)
  to[out=-90,in=160] ( 1.45, 0.39) [line width=1pt];
\foreach \x in {0,45,75,120,165,195,240,285,315}
\draw [red][fill=white][rotate = \x]  ( 0.00, 1.51) circle (1.5pt) [line width=1pt];
\draw [red][fill=white][rotate =  0]  ( 0.00, 0.00) circle (1.5pt) [line width=1pt];
%
%\draw [black][fill=black!30][line width=1pt] (0,0) circle (3mm);
\end{tikzpicture}
\ \
%%%%%%%%%%%%%%%%%%%%%%%%%%%%%%%%%%%%%%%%%%%%%%%
%%%%%%%%%%%%%%%%%%%%%%%%%%%%%%%%%%%%%%%%%%%%%%%
%%%%%%%%%%%%%%%%%%%%%%%%%%%%%%%%%%%%%%%%%%%%%%%
\ \
\begin{tikzpicture}[scale=2]
\draw [line width=1.2pt] (0,0) circle (1.5cm);
% arcs
\draw [blue] (-0.75, 1.30)-- (-0.75,-1.30) [line width =1pt];
\draw [blue] (-0.75,-1.30)-- ( 1.50, 0.00) [line width =1pt];
\draw [blue] ( 1.50, 0.00)-- (-0.75, 1.30) [line width =1pt];
\draw [blue] (-0.75, 1.30)-- ( 0.75, 1.30) [line width =1pt];
\draw [blue] ( 1.50, 0.00)-- ( 0.75,-1.30) [line width =1pt];
\draw [blue] (-0.75,-1.30)-- (-1.50, 0.00) [line width =1pt];
\draw [blue] ( 1.50, 0.00)-- ( 0.00,-1.50) [line width =1pt];
\draw [blue] (-0.75,-1.30)-- (-1.30, 0.75) [line width =1pt];
\draw [blue] (-0.75, 1.30)-- ( 1.30, 0.75) [line width =1pt];
%\draw [->,cyan] ( 0.76, 0.30) -- ( 0.77,-0.37);
%\draw [->,cyan] (-0.64, 0.49) -- (-0.09, 0.81);
%\draw [->,cyan] (-0.13,-0.79) -- (-0.65,-0.49);
%\draw [->,cyan] ( 0.43, 0.73) -- ( 0.44, 0.92);
%\draw [->,cyan] ( 0.44, 1.08) -- ( 0.44, 1.26);
%\draw [->,cyan] ( 0.46,-0.71) -- ( 0.60,-0.82);
%\draw [->,cyan] ( 0.72,-0.90) -- ( 0.87,-1.01);
%\draw [->,cyan] (-0.78,-0.18) -- (-1.00,-0.18);
%\draw [->,cyan] (-1.14,-0.18) -- (-1.32,-0.19);
%\draw [cyan] (-0.64, 0.12) node {\tiny$1$};
%\draw [cyan] ( 0.55, 0.68) node {\tiny$2$};
%\draw [cyan] ( 0.53,-0.51) node {\tiny$3$};
%\draw [cyan] (-0.92,-0.29) node {\tiny$4$};
%\draw [cyan] (-1.11,-0.50) node {\tiny$5$};
%\draw [cyan] ( 0.31, 1.09) node {\tiny$6$};
%\draw [cyan] ( 0.12, 1.33) node {\tiny$7$};
%\draw [cyan] ( 0.91,-0.59) node {\tiny$8$};
%\draw [cyan] ( 1.17,-0.65) node {\tiny$9$};
% points
\fill [blue] ( 1.50, 0.00) circle (1.5pt);
\fill [blue] ( 0.75, 1.30) circle (1.5pt);
\fill [blue] (-0.75, 1.30) circle (1.5pt);
\fill [blue] (-1.50, 0.00) circle (1.5pt);
\fill [blue] (-0.75,-1.30) circle (1.5pt);
\fill [blue] ( 0.75,-1.30) circle (1.5pt);
\fill [blue] ( 0.00,-1.50) circle (1.5pt);
\fill [blue] (-1.30, 0.75) circle (1.5pt);
\fill [blue] ( 1.30, 0.75) circle (1.5pt);
\draw [black][fill=black!30][line width=1pt] (0,0) circle (3mm);
\foreach \x in {0,120,240}
\fill [blue][rotate=\x] (-0.15,0.26) circle (1.5pt);
\foreach \x in {0,120,240}
\draw [blue][rotate=\x]
  (-0.15, 0.26) --
  (-0.75, 1.30) to[out=-90,in=-15]
  (-0.98, 1.14) [line width=1pt][dashed];
\foreach \x in {0,120,240}
\draw [blue][rotate=\x]
  ( 1.07, 1.05) --
  (-0.75, 1.30) --
  ( 1.45, 0.45) [line width=1pt][dashed];
\foreach \x in {0,120,240}
\fill [blue][rotate=\x]
  ( 1.07, 1.05) circle (1.5pt)
  ( 1.43, 0.45) circle (1.5pt)
  (-0.98, 1.14) circle (1.5pt);
\foreach \x in {  0, 35, 44, 53.5, 67.5, 82.5,
                120,155,164,173.5,187.5,202.5
                240,275,284,293.5,307.5,322.5}
\draw [red][fill=white][rotate = \x]  ( 0.00, 1.51) circle (1.5pt) [line width=1pt];
\foreach \x in { 60,180,300}
\draw [red][fill=white][rotate = \x]  ( 0.30, 0.00) circle (1.5pt) [line width=1pt];
\end{tikzpicture}
\caption{Gentle algebra given by this marked surface has an infinite global dimension}
\label{fig:gent gldim=infty}
\end{center}
\end{figure}
Then $\gldim A = \infty$, and $\bfS_{\BD(A)}$ is shown in the second picture of \Pic \ref{fig:gent gldim=infty}
(we ignore the $\rbullet$-FFAS of $\bfS_{\BD(A)}$ for brevity).
\end{example}

\subsection{Marked surfaces of CM-Auslander algebras}

We can draw the marked surfaces of the CM-Auslander algebras of gentle algebras by using Theorem \ref{thm:CL}.
To do this, we recall some terms about forbidden path.

A {\defines forbidden path} is a path $a_1a_2\cdots a_l$ on the bound quiver $(\Q,\I)$ of gentle algebra
such that $a_ia_{i+1} \in \I$ holds for all $1\=< i\=< l-1$.
Furthermore, we call it a {\defines forbidden thread} if it is maximal,
that is, for any $\alpha$ with $\target(\alpha)=\source(a_1)$ we have $\alpha a_1\notin\I$
and for any $\beta$ with $\source(\beta)=\target(a_l)$ we have $a_l\beta\notin\I$.
{\defines Permitted path} is the dual of a forbidden path, that is,
it is a path $a_1a_2\cdots a_l$ which does not in $\I$.
We can define {\defines permitted thread} in a dual way.

\begin{remark} \rm
Permitted threads are introduced by Wald and Waschb\"{u}sch, which are originally said to be {\defines directed V$-$sequences} in \cite{WW1985}.
In \cite{BR1987}, V$-$sequences are used to describe the indecomposable module over string algebra, and they are called strings.
In \cite{AG2008}, permitted threads and forbidden threads are used to describe an invariant of the derived category of gentle algebra.
Today, this invariant is said to be the Avella-Alaminos$-$Gie\ss\ invariant (= AG-invariant for short) of gentle algebra.
\end{remark}

Let $\bEP(\bfS)$ be the set of all $\bbullet$-elementary polygons of a marked surface $\bfS$.
$\bEP(\bfS)$ contains two classes of $\bbullet$-elementary polygons:
the class $\bEP^{\infty}(\bfS)$ of all $\infty$-elementary polygons;
and the class $\bEP^{\pounds}(\bfS) = \bEP(\bfS)\backslash \bEP^{\infty}(\bfS)$ of other $\bbullet$-elementary polygons.
Then, by using the correspondence given in Theorem \ref{thm:corresp}, there is a bijection
\[ F^{\pounds}(A) \to \bEP^{\pounds}(\bfS) \]
from the set $F^{\pounds}(A)$ of all forbidden threads to $\bEP^{\pounds}(\bfS)$, and there is a bijection
\[ F^{\infty}(A) \to \bEP^{\infty}(\bfS)\]
form the set $F^{\infty}(A)$ of all forbidden cycles to $\bEP^{\infty}(\bfS)$.
Here, $A \cong A(\bfS)$. That is, we have the bijection
\begin{align}\label{formula:forb-EP}
  F(A) := F^{\pounds}(A)\cup F^{\infty}(A) \mathop{\rightleftarrows}\limits^{F^{-1}}_{F} \bEP(\bfS_A)
\end{align}

\begin{construction} \label{const:CMA} \rm
Let $\bfS$ be a marked surface ignoring $\rbullet$-FFAS,
then every $\rbullet$-marked point is rewritten as a boundary component of $\Surf$.
In this case, $\bfS=(\Surf, \Marked, \bAS)$.

For each $\infty-$elementary polygon (assume the number of edges of it is $\ell \>= 1$,
see the first picture of \Pic \ref{fig:infty poly II}),
\begin{figure}[htbp]
\centering
\begin{tikzpicture}[scale=1.4]
\foreach \x in {0, 60, 120, 180, 300}
\draw[blue][rotate = \x] ( 1.73,-1.00) -- ( 1.73, 1.00) [line width=1pt];
\draw[blue][rotate =240] ( 1.73,-1.00) -- ( 1.73, 1.00) [line width=1pt][dotted];
\draw[blue][rotate = 60] ( 1.73*1.2, 1.00*1.2) node{$v_1$};
\draw[blue][rotate =120] ( 1.73*1.2, 1.00*1.2) node{$v_2$};
\draw[blue][rotate =180] ( 1.73*1.2, 1.00*1.2) node{$v_3$};
\draw[blue][rotate =240] ( 1.73*1.2, 1.00*1.2) node{$v_{\ell-2}$};
\draw[blue][rotate =300] ( 1.73*1.2, 1.00*1.2) node{$v_{\ell-1}$};
\draw[blue][rotate =  0] ( 1.73*1.2, 1.00*1.2) node{$v_{\ell}$};
\foreach \x in {0, 60, ..., 300}
\fill[blue][rotate = \x] ( 1.73,-1.00) circle(1mm);
\draw[black][fill= black!30][line width=1pt] (0,0) circle(5mm);
\draw[black] (0,-0.5) node[above]{$\bcomp$};
\draw (1.732-0.7,0) node{$\Poly$};
% arrows
\foreach \x in{60,120,180,240,300,360}
\draw[cyan][->][line width=1pt][rotate=-30+\x] (0.2,-1.73) arc(180:62:0.8);
\draw[cyan][rotate= -60] (0.8, 0.5) node{$a_{\ell-1}$};
\draw[cyan][rotate=   0] (0.8, 0.5) node{$a_{\ell}$};
\draw[cyan][rotate=  60] (0.8, 0.5) node{$a_{1}$};
\draw[cyan][rotate= 120] (0.8, 0.5) node{$a_{2}$};
\draw[cyan][rotate= 180] (0.8, 0.5) node{$a_{3}$};
\draw[cyan][rotate= 240] (0.8, 0.5) node{$a_{\ell-2}$};
\draw[cyan][rotate= 205] (0.8, 0.5) node{$\ddots$};
% arc
\draw[rotate=   0] (1.732+0.3,0) node{$\bluea{\ell-1}{\Poly}$};
\draw[rotate=  60] (1.732+0.3,0) node{$\bluea{\ell}{\Poly}$};
\draw[rotate= 120] (1.732+0.3,0) node{$\bluea{1}{\Poly}$};
\draw[rotate= 180] (1.732+0.3,0) node{$\bluea{2}{\Poly}$};
\draw[rotate= 300] (1.732+0.3,0) node{$\bluea{\ell-2}{\Poly}$};
\end{tikzpicture}
\begin{tikzpicture}[scale=1.4]
\draw[white] (0,-2.5) -- (0,2.5);
\draw (0,0) node{$\Longrightarrow$};
\end{tikzpicture}
\begin{tikzpicture}[scale=1.4]
\foreach \x in {0, 60, 120, 180, 300}
\draw[blue][rotate = \x] ( 1.73,-1.00) -- ( 1.73, 1.00) [line width=1pt];
\draw[blue][rotate =240] ( 1.73,-1.00) -- ( 1.73, 1.00) [line width=1pt][dotted];
\draw[blue][rotate = 60] ( 1.73*1.2, 1.00*1.2) node{$v_1$};
\draw[blue][rotate =120] ( 1.73*1.2, 1.00*1.2) node{$v_2$};
\draw[blue][rotate =180] ( 1.73*1.2, 1.00*1.2) node{$v_3$};
\draw[blue][rotate =240] ( 1.73*1.2, 1.00*1.2) node{$v_{\ell-2}$};
\draw[blue][rotate =300] ( 1.73*1.2, 1.00*1.2) node{$v_{\ell-1}$};
\draw[blue][rotate =  0] ( 1.73*1.2, 1.00*1.2) node{$v_{\ell}$};
\foreach \x in {0, 60, ..., 300}
\fill[blue][rotate = \x] ( 1.73,-1.00) circle(1mm);
\draw[black][fill= black!30][line width=1pt] (0,0) circle(5mm);
\foreach \x in {0, 60, ..., 300}
\fill[blue][rotate = \x] ( 0.00, 0.50) circle(1mm);
\foreach \x in {0, 60, ..., 300}
\draw[blue][line width=1pt][rotate = \x] ( 0.00, 0.50) -- ( 0.00,2.00);
\draw[blue] (-0.7,-1) node{$\ddots$};
\draw[blue][rotate=   0] ( 0.20, 0.60) node{\tiny$w_1$};
\draw[blue][rotate=  60] ( 0.20, 0.60) node{\tiny$w_2$};
\draw[blue][rotate= 120] ( 0.20, 0.60) node{\tiny$w_3$};
\draw[blue][rotate= 180] ( 0.30, 0.60) node{\tiny$w_{\ell-2}$};
\draw[blue][rotate= 240] ( 0.20, 0.70) node{\tiny$w_{\ell-1}$};
\draw[blue][rotate= 300] ( 0.20, 0.60) node{\tiny$w_{\ell}$};
% arc
\draw[rotate=   0] (1.732+0.3,0) node{$\bluea{\ell-1}{\Poly}$};
\draw[rotate=  60] (1.732+0.3,0) node{$\bluea{\ell}{\Poly}$};
\draw[rotate= 120] (1.732+0.3,0) node{$\bluea{1}{\Poly}$};
\draw[rotate= 180] (1.732+0.3,0) node{$\bluea{2}{\Poly}$};
\draw[rotate= 300] (1.732+0.3,0) node{$\bluea{\ell-2}{\Poly}$};
\draw[rotate=   0] (0.3,1.2) node{$\bluea{1}{}$};
\draw[rotate=  60] (0.3,1.2) node{$\bluea{2}{}$};
\draw[rotate= 120] (0.3,1.2) node{$\bluea{3}{}$};
\draw[rotate= 185] (0.3,1.2) node{$\bluea{\ell-2}{}$};
\draw[rotate= 240] (0.3,1.2) node{$\bluea{\ell-1}{}$};
\draw[rotate= 300] (0.3,1.2) node{$\bluea{\ell}{}$};
\end{tikzpicture}
\caption{$\infty-$elementary polygon}
\label{fig:infty poly II}
\end{figure}
we add $\ell$ $\rbullet$-marked points $w_1$, $w_2$, $\ldots$, $w_{\ell}$ to $\bcomp$ in a counterclockwise direction,
and add $\ell$ $\rbullet$-arcs $\bluea{1}{}$, $\bluea{2}{}$, $\ldots$, $\bluea{\ell}{}$ such that
the ending points of $\bluea{i}{}$ are $v_i$ and $w_i$, see the second picture of \Pic \ref{fig:infty poly II}.
Then we obtain a new marked surface which is denoted by $\bfS^{\CMA} = (\Surf^{\CMA}, \Marked^{\CMA}, \AS^{\CMA})$.
Obviously, if $\bfS$ does not have any $\infty$-elementary polygon, then $\bfS^{\CMA}=\bfS$.
\end{construction}

The following proposition shows that the marked surface of the CM-Auslander algebra of a gentle algebra $A$ is homotopic to $\bfS_A^{\CMA}$.

\begin{proposition} \label{prop:CMAsurface} \
\begin{itemize}
  \item[\rm(1)] Let $A$ be a gentle algebra and $\bfS_A$ be its marked surface.
    Then $\bfS_{A^{\CMA}}$ is homotopic to $\bfS_A^{\CMA}$.
  \item[\rm(2)] Let $\bfS$ be a marked surface and $A(\bfS)$ be its gentle algebra.
    Then $A(\bfS)^{\CMA} \cong A(\bfS^{\CMA})$.
\end{itemize}
\end{proposition}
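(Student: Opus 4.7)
Part (1) follows from Part (2) together with Theorem \ref{thm:corresp}: applying Part (2) to $\bfS = \bfS_A$ yields $A(\bfS_A^{\CMA}) \cong A(\bfS_A)^{\CMA} \cong A^{\CMA}$, whence $\bfS_{A^{\CMA}}$ and $\bfS_A^{\CMA}$ are homotopic. I therefore focus on Part (2), where the plan is to compare the bound quiver of $A(\bfS^{\CMA})$, read off from $\bfS^{\CMA}$ via Theorem \ref{thm:corresp}, with the bound quiver $(\Q^{\CMA}, \I^{\CMA})$ of $A(\bfS)^{\CMA}$ given by Theorem \ref{thm:CL}, and to exhibit an isomorphism vertex-by-vertex, arrow-by-arrow, and relation-by-relation.

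For the vertex correspondence, the $\bbullet$-arcs of $\bfS^{\CMA}$ split into the $\bbullet$-arcs of $\bfS$ (which parametrise $\Q_0$) and the newly added arcs $\bluea{i}{}$ from Construction \ref{const:CMA}. The bijection (\ref{formula:forb-EP}) identifies $\infty$-elementary polygons of $\bfS$ with forbidden cycles of $A(\bfS)$, and each $\ell$-sided $\infty$-polygon contributes exactly $\ell$ new arcs, matching the $\ell$ arrows of the corresponding forbidden cycle. By Theorem \ref{thm:Kalck Gproj}, these arrows parametrise precisely the non-projective indecomposable G-projectives $\alpha A$, so the arcs $\bluea{i}{}$ reproduce the vertices $\Q_0^{\CMA} \setminus \Q_0$.

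For the arrow and relation correspondence, each polygon in $\bEP^{\pounds}(\bfS)$ is unchanged in $\bfS^{\CMA}$ and therefore supplies the arrows $\Q_1^{\ncyc}$ and the non-cycle relations verbatim. An $\infty$-polygon with $\ell$ edges is subdivided by the arcs $\bluea{i}{}$ into $\ell$ quadrilateral sub-polygons (with $\bbullet$-vertices $v_i, v_{i+1}$ and boundary marked points $w_i, w_{i+1}$), each lying in $\bEP^{\pounds}(\bfS^{\CMA})$. At each $v_i$, the arc $\bluea{i}{}$ splits the original angle $a_i$ into two new angles which, under the vertex bijection above, correspond exactly to the arrows $a_i^{-}$ (from $\source(a_i)$ to $a_i A$) and $a_i^{+}$ (from $a_i A$ to $\target(a_i)$) in the construction preceding Theorem \ref{thm:CL}. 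Consecutive arrows around $v_i$ that belong to distinct sub-polygons produce relations of $A(\bfS^{\CMA})$; this yields the relation $a_{i-1}^{+} a_i^{-} \in \I^{\CMA}$ whenever $a_{i-1} a_i \in \I$ on the forbidden cycle, reproducing the relations $\beta^{+}\alpha^{-}$ of Theorem \ref{thm:CL}, while the new $\rbullet$-marked points $w_i$ contribute no further angles and hence no new arrows or relations.

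The main technical obstacle is the consistent orientation bookkeeping in the last step. One must fix the identification of the two sides of $\bluea{i}{}$ with $a_i^{+}$ and $a_i^{-}$ so that the sources and targets prescribed by the construction preceding Theorem \ref{thm:CL} match the geometric sources and targets read from $\bfS^{\CMA}$, and verify in particular that the composite $a_i^{-} a_i^{+}$ is a non-relation of $A(\bfS^{\CMA})$ (since $\bluea{i}{}$, together with each of the two adjacent original arcs at $v_i$, lies in a common sub-polygon), so that no spurious relations are introduced beyond those of $\I^{\CMA}$.
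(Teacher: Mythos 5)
Your proposal is correct and follows essentially the same route as the paper: both arguments compare, polygon by polygon, the effect of Construction \ref{const:CMA} on $\bEP^{\infty}(\bfS)$ versus $\bEP^{\pounds}(\bfS)$ with the splitting of each forbidden cycle into $a_1^{-}a_1^{+}\cdots a_{\ell}^{-}a_{\ell}^{+}$ and the preservation of forbidden threads in Theorem \ref{thm:CL}, mediated by the bijection (\ref{formula:forb-EP}). The only (immaterial) difference is that you prove (2) directly and deduce (1) via Theorem \ref{thm:corresp}, whereas the paper proves (1) and deduces (2); your write-up is in fact somewhat more explicit than the paper's about the vertex, arrow, and relation bookkeeping.
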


\begin{proof}
We only prove (1), and the statement (2) is a direct corollary of the statement (1) and Theorem \ref{thm:corresp}.

For any $\Poly\in\bEP(\bfS_A)$, we have two cases as follows.
\begin{itemize}
  \item[(A)] If $\Poly \in \bEP^{\infty}(\bfS_A)$, that is, $\Poly$ is of the form
    shown in the first picture of \Pic \ref{fig:infty poly II},
    then it corresponds to the polygon $\Poly^{\CMA}$ of $\bfS_A^{\CMA}$ which is of the form
    shown in the second picture of \Pic \ref{fig:infty poly II}.
    Here, notice that $\Poly^{\CMA}$ is a union of $\ell$ $\bbullet$-elementary polygons
    lying in $\bEP^{\pounds}(\bfS_A^{\CMA})$.
  \item[(B)] If $\Poly \in \bEP^{\pounds}(\bfS_A)$, then the $\bbullet$-elementary polygon $\Poly^{\CMA}$ in $\bEP^{\pounds}(\bfS_A^{\CMA})$ coincides with $\Poly$.
\end{itemize}

By formula (\ref{formula:forb-EP}), the forbidden path $F(\Poly)$ has two cases (a) and (b) which correspond to (A) and (B), respectively.
\begin{itemize}
  \item[(a)] In Case (A), $F(\Poly)$ is a forbidden cycle $a_1a_2\cdots a_{\ell}$.
    By Theorem \ref{thm:CL}, each arrow $a_{i}$ in $F(\Poly)$ splits two arrows
    \[ a_{i}^{-}: \av(\bluea{\overline{i-1}}{\Poly}) \to a_iA
    \text{ and }  a_{i}^{+}: a_iA \to \av(\bluea{i}{\Poly}), \]
    where $\overline{x} =
    \begin{cases}
      x \bmod \ell, & x \notin \ell\mathbb{Z};  \\
      \ell, & x \in \ell\mathbb{Z}.
    \end{cases}$
    That is, $F(\Poly)$ corresponds to the oriented cycle
    \[ a_1^{-}a_1^{+}a_2^{-}a_2^{+}\cdots a_{\ell}^{-}a_{\ell}^{+} \]
    in $(\Q^{\CMA},\I^{\CMA})$. Here, $a_1^{+}a_2^{-}, \ldots, a_{\ell-1}^{+}a_{\ell}^{-}, a_{\ell}^{+}a_1^{-} \in \I^{\CMA}$.

  \item[(b)] In Case (B), $F(\Poly) \in F(A)$ is a forbidden thread.
    By Theorem \ref{thm:CL}, the forbidden thread in $F(A^{\CMA})$ corresponding to $F(\Poly)$
    coincides with $F(\Poly)$.
\end{itemize}

The changes given in (A), (B), (a), and (b) show that $\bfS_{A^{\CMA}}$ is homotopic to $\bfS_A^{\CMA}$.
\end{proof}

\begin{example} \rm
Let $A=\kk\Q/\I$ be a gentle algebra as in Example \ref{exp:gent gldim=infty}.
Its marked surface is shown in \Pic \ref{fig:gent gldim=infty}.
Then the marked surface of the CM-Auslander algebra $A^{\CMA}$ of $A$ is shown in \Pic \ref{fig:CMA}.
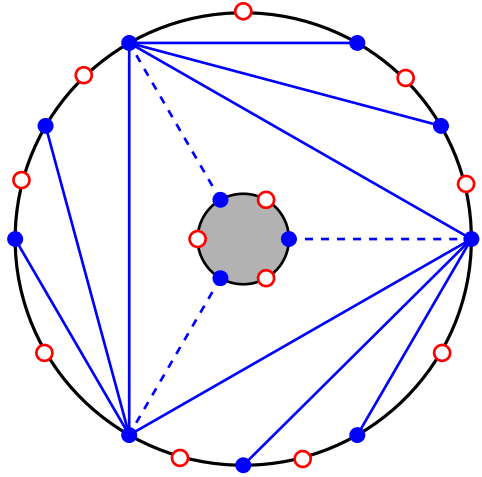
\begin{figure}[htbp]
\centering
\begin{tikzpicture}[scale=2]
\draw [line width=1.2pt] (0,0) circle (1.5cm);
% arcs
\draw [blue] (-0.75, 1.30)-- (-0.75,-1.30) [line width =1pt];
\draw [blue] (-0.75,-1.30)-- ( 1.50, 0.00) [line width =1pt];
\draw [blue] ( 1.50, 0.00)-- (-0.75, 1.30) [line width =1pt];
\draw [blue] (-0.75, 1.30)-- ( 0.75, 1.30) [line width =1pt];
\draw [blue] ( 1.50, 0.00)-- ( 0.75,-1.30) [line width =1pt];
\draw [blue] (-0.75,-1.30)-- (-1.50, 0.00) [line width =1pt];
\draw [blue] ( 1.50, 0.00)-- ( 0.00,-1.50) [line width =1pt];
\draw [blue] (-0.75,-1.30)-- (-1.30, 0.75) [line width =1pt];
\draw [blue] (-0.75, 1.30)-- ( 1.30, 0.75) [line width =1pt];
%\draw [->,cyan] ( 0.76, 0.30) -- ( 0.77,-0.37);
%\draw [->,cyan] (-0.64, 0.49) -- (-0.09, 0.81);
%\draw [->,cyan] (-0.13,-0.79) -- (-0.65,-0.49);
%\draw [->,cyan] ( 0.43, 0.73) -- ( 0.44, 0.92);
%\draw [->,cyan] ( 0.44, 1.08) -- ( 0.44, 1.26);
%\draw [->,cyan] ( 0.46,-0.71) -- ( 0.60,-0.82);
%\draw [->,cyan] ( 0.72,-0.90) -- ( 0.87,-1.01);
%\draw [->,cyan] (-0.78,-0.18) -- (-1.00,-0.18);
%\draw [->,cyan] (-1.14,-0.18) -- (-1.32,-0.19);
%\draw [cyan] (-0.64, 0.12) node {\tiny$1$};
%\draw [cyan] ( 0.55, 0.68) node {\tiny$2$};
%\draw [cyan] ( 0.53,-0.51) node {\tiny$3$};
%\draw [cyan] (-0.92,-0.29) node {\tiny$4$};
%\draw [cyan] (-1.11,-0.50) node {\tiny$5$};
%\draw [cyan] ( 0.31, 1.09) node {\tiny$6$};
%\draw [cyan] ( 0.12, 1.33) node {\tiny$7$};
%\draw [cyan] ( 0.91,-0.59) node {\tiny$8$};
%\draw [cyan] ( 1.17,-0.65) node {\tiny$9$};
% points
\fill [blue] ( 1.50, 0.00) circle (1.5pt);
\fill [blue] ( 0.75, 1.30) circle (1.5pt);
\fill [blue] (-0.75, 1.30) circle (1.5pt);
\fill [blue] (-1.50, 0.00) circle (1.5pt);
\fill [blue] (-0.75,-1.30) circle (1.5pt);
\fill [blue] ( 0.75,-1.30) circle (1.5pt);
\fill [blue] ( 0.00,-1.50) circle (1.5pt);
\fill [blue] (-1.30, 0.75) circle (1.5pt);
\fill [blue] ( 1.30, 0.75) circle (1.5pt);
\draw [black][fill=black!30][line width=1pt] (0,0) circle (3mm);
\foreach \x in {0,120,240}
\fill [blue][rotate=\x] (-0.15,0.26) circle (1.5pt);
\foreach \x in {0,120,240}
\draw [blue][rotate=\x]
  (-0.15, 0.26) --
  (-0.75, 1.30) [line width=1pt][dashed];
\foreach \x in {  0, 44, 75,
                120,164,195,
                240,284,315}
\draw [red][fill=white][rotate = \x]  ( 0.00, 1.51) circle (1.5pt) [line width=1pt];
\foreach \x in { 60,180,300}
\draw [red][fill=white][rotate = \x]  ( 0.30, 0.00) circle (1.5pt) [line width=1pt];
\end{tikzpicture}
\caption{The CM-Auslander algebra of the gentle algebra given in Example \ref{exp:gent gldim=infty}}
\label{fig:CMA}
\end{figure}

\end{example}

\section{Representation types and derived representation types of gentle algebras} \label{Sect:repr-type}

A finite-dimensional $\kk$-algebra $A$ is said to be
\begin{itemize}
  \item[(1)] {\defines representation-finite} if $\sharp\ind(\modcat A)<\infty$,
    where $\ind(\modcat A)$ is the set of all indecomposable modules up to isomorphism,
    and ``$\sharp$'' is a map sending each set $S$ to the number of all elements of $S$;
  \item[(2)] {\defines representation-infinite} if $\sharp\ind(\modcat A)=\infty$;
  \item[(3)] {\defines derived discrete} if for each cohomology dimension vector, there admit only finitely many objects in $\Dcat^b(A)$ up to isomorphism;
  \item[(4)] {\defines strongly derived unbounded} if for each cohomology dimension vector, there admit infinitely many objects in $\Dcat^b(A)$ up to isomorphism and shift.
\end{itemize}

In this section, we will study the representation type of gentle algebra.

\subsection{Representation types of gentle algebras} \label{subsect:repr type}

In \cite{BR1987}, the authors introduce strings and bands and describe the finitely generated module category of gentle algebra.
Next, we recall the definitions of strings and bands.

For the bound quiver $(\Q,\I)$ of any gentle algebra $A=\kk\Q/\I$,
we define $\Q_1^{-1} = \{a^{-1} \mid a\in \Q_1\}$ is a set such that there is a bijection
\[  \Q_1 \to \Q_1^{-1} \]
sending each arrow $a\in \Q_1$ to the {\defines formal inverse} $a^{-1}$ of $a$
Then the {\defines formal inverse} of a path $p=a_1a_2\cdots a_{\ell}$ is
\[ p^{-1} = a_{\ell}^{-1}\cdots a_2^{-1}a_1^{-1}. \]
For any path or any formal inverse of a path $\Path$, we naturally define $(\Path^{-1})^{-1}=p$.
In particular, for each path $\e_v$ of length zero corresponding to $v\in \Q_0$,
we define $\e_v^{-1}=\e$. Thus, we have $\Q_0^{-1}=\Q_0$.
For any formal inverse $p^{-1}$, we define $\source(p^{-1}) = \target(p)$ and $\target(p^{-1}) = \source(p)$.
Next, we give the definitions of string and band.

\begin{definition} \rm \label{def:str band}
A {\defines string} is a sequence $\str = p_1p_2\cdots p_m$ such that the following conditions hold.
\begin{itemize}
  \item[(1)] Each $p_i$ ($1 \=< i\=< m$) is either a path or the formal inverse of a path. Furthermore,
    \begin{itemize}
      \item if $p_i$ is a path, then it is a permitted path;
      \item if $p_i$ is the formal inverse of a path, then $p_i^{-1}$ is a permitted path.
    \end{itemize}

  \item[(2)] For $p_i$ and $p_{i+1}$, if $p_i$ is a path, then $p_{i+1}$ is the formal inverse of a path such that $\target(p_i) = \target(p_{i+1}^{-1})$;
    if $p_{i+1}$ is a path, then $p_i$ is the formal inverse of a path such that $\source(p_i^{-1}) = \source(p_{i+1})$.

  \item[(3)] If $p_i=a_{i,1}a_{i,2}\cdots a_{i,l_i}$ and $p_{i+1}=a_{i+1,l_{i+1}}^{-1} \cdots a_{i+1, 2}^{-1} a_{i+1,1}^{-1}$, then $a_{i,l_i}\ne a_{i+1,l_{i+1}}$;
      if $p_i=a_{i,l_i}^{-1}\cdots a_{i,2}^{-1}a_{i,1}^{-1}$ and $p_{i+1} = a_{i+1,1}a_{i+1,2}\cdots a_{i+1,l_{i+1}}$,
      then $a_{i,1} \ne a_{i+1,1}$.
\end{itemize}
Two strings $\str$ and $\str'$ are said to be {\defines equivalent}, write it as $\str\simeq \str'$, if one of $\str=\str'$ and $\str=(\str')^{-1}$ holds.

A {\defines band} is a string $\band = p_1p_2\cdots p_{m}$ with $\target(p_{m})=\source(p_1)$ satisfying the following conditions.
\begin{itemize}
  \item[(1)] It is not a non-trivial power of any string.
  \item[(2)] The power $\band^2$ is a string.
\end{itemize}
Two bands $\band$ and $\band'$ are said to be {\defines equivalent}, write it as $\band\simeq \band'$, if one of $\band[t]=\band'$ and $\band[t]=(\band')^{-1}$ holds,
where $t$ is an integer with $0\=< t \=< m$ and $\band[t] = p_{1+t}p_{2+t}\cdots p_{m}p_{1}\cdots p_{t}$.
\end{definition}

By the works of Butler and Ringel in \cite{BR1987}, we have the following description of the module category of gentle algebra.

\begin{theorem}[{Butler$-$Ringel Correspondence}] \label{thm:BR}
For any gentle algebra $A=\kk\Q/\I$, there exists the following bijection
\[ \M : \Str(A) \cup (\Band(A)\times \mathscr{J}) \to \ind(\modcat A), \]
where $\Str(A)$ is the set of all equivalence classes of strings,
$\Band(A)$ is the set of all equivalence classes of bands, and
$\mathscr{J}$ is the set of all Jordan blocks with non-zero eigenvalues.
\end{theorem}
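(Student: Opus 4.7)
The plan is to follow the classical functorial approach to strings and bands, constructing the map $\M$ explicitly and then verifying bijectivity by exhausting the indecomposables. First, I would define the \emph{string module} $\M(\str)$ for any string $\str = p_1 p_2 \cdots p_m$: take as $\kk$-basis the set of ``positions'' on $\str$ (formally the vertices $v_0, v_1, \ldots, v_m$ read off at the start/end of each $p_i$), and let each arrow $a \in \Q_1$ act by sending $v_{i-1} \mapsto v_i$ whenever the corresponding segment of $\str$ is $a$, sending $v_i \mapsto v_{i-1}$ whenever the segment is $a^{-1}$, and acting as zero otherwise. Gentleness, together with the permitted/forbidden condition in Definition~\ref{def:str band}, guarantees that the relations in $\I$ are respected. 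For a band $\band$ and a Jordan block $J \in \mathscr{J}$ of size $n$ with eigenvalue $\lambda \ne 0$, one builds the \emph{band module} $\M(\band, J)$ on $\kk^n \otimes \kk\Q_{\band,0}$, where along the closing arrow one substitutes $\lambda \cdot \mathrm{id} + N$ (with $N$ the nilpotent part of $J$) rather than the identity; this twisting is what makes band modules genuinely $n$-dimensional families.

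Next, I would verify that $\M$ is well-defined on equivalence classes. For strings this amounts to checking $\M(\str) \cong \M(\str^{-1})$, which is immediate from reversing the order of the basis. For bands one checks that cyclic rotations $\band[t]$ and inversions give isomorphic modules with the same Jordan datum; this is a direct calculation using the explicit basis.

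The heart of the argument is showing that $\M$ is a bijection onto $\ind(\modcat A)$. Indecomposability of $\M(\str)$ and $\M(\band, J)$ follows by computing the endomorphism ring: any endomorphism of a string module is determined by ``substring'' graph maps, and gentleness forces these endomorphism rings to be local (for strings) or isomorphic to $\kk[x]/(x^n)$ for band modules with a single Jordan block. Injectivity of $\M$ (up to the declared equivalences) is then a matter of reconstructing the underlying combinatorial datum from the module: the set of composition factors and the action of the arrows recover the sequence of $p_i$'s up to inversion/rotation, and the Jordan block is recovered from the action on the ``closing'' socle component. For surjectivity, one applies the functorial filtration technique of Gelfand$-$Ponomarev / Ringel: given $M \in \ind(\modcat A)$, iteratively filter $M$ by the kernels and images of the canonical pairs of operators attached at each vertex (using string algebra conditions (1)--(4) which guarantee that at every vertex the action splits into at most two ``in'' and two ``out'' directions), and show the associated graded is a direct sum whose summands, by indecomposability of $M$, reduce to a single string or band module.

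The main obstacle will be surjectivity, i.e.\ producing the string/band datum from an abstract indecomposable. The technical core is the functorial filtration argument, which requires a careful bookkeeping of the ``direct'' and ``inverse'' letters at each vertex, and a separate treatment of the cyclic case where no endpoint exists (forcing the appearance of a Jordan block through an eigenvector argument over the algebraically closed case, or via the structure theorem for finitely generated $\kk[x, x^{-1}]$-modules in general). Everything else -- the construction, the well-definedness on equivalence classes, and indecomposability -- reduces to routine but lengthy combinatorics on the bound quiver $(\Q, \I)$.
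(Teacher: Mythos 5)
The paper gives no proof of this statement: it is quoted directly from Butler--Ringel \cite{BR1987} (building on Wald--Waschb\"usch \cite{WW1985}), and your outline --- explicit string/band module constructions, well-definedness up to inversion and rotation, locality of endomorphism rings, and surjectivity via the Gelfand--Ponomarev functorial filtration --- is precisely the argument of that cited source. Your remark that the Jordan-block formulation of $\mathscr{J}$ implicitly assumes $\kk$ algebraically closed (otherwise one should index band modules by indecomposable $\kk[x,x^{-1}]$-modules) is a correct and worthwhile caveat that the paper itself glosses over.
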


Now, we study the relation between representation types of gentle algebra $A$ and that of BD-gentle algebra $\BD(A)$.
First, we have the following lemma which admits that the structure of the bound quiver of $A$ is similar to that of the bound quiver of $\BD(A)$.

\begin{lemma} \label{lemm:BDband}
The bound quiver of a gentle algebra $A$ has a band if and only if that of $\BD(A)$ has a band.
\end{lemma}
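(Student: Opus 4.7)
The plan is to establish a bijection between bands of $A$ and bands of $\BD(A)$ via an ``inflation/deflation'' procedure on the quivers. By Theorem \ref{thm:BD(gent)} together with Lemma \ref{lemm:BD(Am)}, the quiver of $\BD(A)$ is obtained from that of $A$ by replacing each block $\bbA_{m_i}$ with $\bbA_{2m_i}$: each vertex $(i,j)$ of $A$ survives, a new ``primed'' vertex $(i,j)'$ is inserted for each $j$, each arrow $a_j^{(i)}:(i,j)\to(i,j+1)$ of $A$ is replaced by a length-$2$ path $\beta_j^{(i)}\alpha_{j+1}^{(i)}$ (with $\beta_j^{(i)}:(i,j)\to(i,j+1)'$ and $\alpha_{j+1}^{(i)}:(i,j+1)'\to(i,j+1)$), and a pendant arrow $\alpha_1^{(i)}:(i,1)'\to(i,1)$ appears at the start of each block. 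Because the identifications $\simeq_{\gentleSet}$ touch only unprimed vertices, and compositions across different blocks are zero in both $A$ and $\BD(A)$ as cross-factor products, a composition $a_j^{(i)}a_l^{(k)}$ is zero in $A$ if and only if the corresponding middle composition $\alpha_{j+1}^{(i)}\beta_l^{(k)}$ is zero in $\BD(A)$.

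For the forward direction, given a band $\band = p_1p_2\cdots p_m$ of $A$, I would inflate each arrow appearing in a $p_r$ (or its formal inverse) to the corresponding length-$2$ path (respectively its inverse), producing a cyclic sequence $\tilde\band$ in $\BD(A)$; the gentle alternation conditions of Definition \ref{def:str band} transfer from $\band$ to $\tilde\band$ because inflation is local and preserves the pattern of zero and non-zero compositions observed above. For the backward direction, the key structural observation is that each vertex $(i,1)'$ has degree one in the quiver of $\BD(A)$, its sole incident arrow being $\alpha_1^{(i)}$, so it cannot lie on any cyclic string; hence a band of $\BD(A)$ avoids every $(i,1)'$. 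Each remaining primed vertex $(i,j)'$ with $j\geq 2$ has degree exactly two, with incident arrows $\beta_{j-1}^{(i)}$ and $\alpha_j^{(i)}$, so whenever a band visits $(i,j)'$ it must use both arrows in immediate succession as $\beta_{j-1}^{(i)}\alpha_j^{(i)}$ or its formal inverse. Contracting each such length-$2$ chunk back to the arrow $a_{j-1}^{(i)}$ produces a cyclic sequence in the quiver of $A$, and reversing the analysis of the forward direction verifies that this is again a band.

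The main obstacle is a careful case check of Definition \ref{def:str band}(2)--(3) at the ``junction'' vertices, that is, at identified vertices $v=(i,j)=(k,l)$ where distinct blocks meet and at the new primed vertices. Concretely, one must check that the correspondence ``$a_j^{(i)}a_l^{(k)}\in\I_A \Leftrightarrow \alpha_{j+1}^{(i)}\beta_l^{(k)}\in\I_{\BD(A)}$'' correctly respects the alternation of paths and formal inverses demanded by the definition of a string, that the ``not a non-trivial power'' condition is preserved both ways (immediate, since inflation is injective and exactly doubles the length of every constituent path), and that $\band^2$ is a string iff $\tilde\band^2$ is (the same local check as the first item). Since $\BD(A)$ is gentle by Theorem \ref{thm:BD(gent)} and the whole correspondence is local, these verifications reduce to a finite case analysis at each type of vertex.
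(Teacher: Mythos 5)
Your proposal is correct and follows essentially the same route as the paper: both identify the quiver of $\BD(A)$ as the arrow-wise ``doubling'' of that of $A$ (each arrow inflated to a length-two path through a new primed vertex, with the identifications and the zero/non-zero pattern of compositions confined to the unprimed vertices) and transport bands back and forth by inflation and contraction. If anything, your treatment of the converse is tighter than the paper's, which simply asserts that any band of $\BD(A)$ must have the inflated form, whereas your degree count at the primed vertices (degree one at each $(i,1)'$, so no band passes through it; degree two with one in-arrow and one out-arrow at every other primed vertex, forcing the length-two chunk) actually proves this and makes the contraction step airtight.
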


\begin{proof}
Assume $A=\kk\Q/\I$ is gentle such that $(\Q,\I)$ has a band,
say $\band=a_1a_2\cdots a_n$, where all $a_i$ lie in $\Q_1^{\pm 1}$,
and assume $\BD(A)=\kk\Q^{\BD}/\I^{\BD}$.
Obviously, $\band$ has at least one source $v$ and at least one sink $w$.
Otherwise, $\band$ is an oriented cycle without any relation,
and then $A$ is an infinite-dimensional algebra, a contradiction.
Thus, $\band$ is equivalent to the following form
\[  p_1p_2\cdots p_{2m}, \]
where $p_1$, $p_2^{-1}$, $p_3$, $p_4^{-1}$, $\cdots$, $p_{2m-1}$,  $p_{2m}^{-1}$ are permitted paths.
By Definition \ref{def:BD-gentle}, each $p_i$ ($i\in \{1,3,\cdots,2m-1\}$) and each $p_j^{-1}$ ($j\in\{2,4,\cdots,2m\}$) are subpaths of some permitted threads $H_i$ and $H_j$, respectively.
Notice that we do not deny the possibility of $H_i=H_j$.
Thus, by Theorem \ref{thm:BD(gent)}, the permitted path
\[p_k^{(-1)^{k+1}} =\ v_{k,1} \To{a_{k,1}}{} v_{k,2} \To{a_{k,2}}{} \cdots \To{a_{k,l_k}}{} v_{k,l_k+1}\]
in $(\Q,\I)$ changes to the permitted path
\[  q_k^{(-1)^{k+1}} = \
    v_{k,1}' \To{a_{k,1}'}{} v_{k,1} \To{a_{k,1}''}{}
    v_{k,2}' \To{a_{k,2}'}{} v_{k,2} \To{a_{k,2}''}{} \cdots
             \To{a_{k,l_k}''}{} v_{k,l_k+1}' \To{a_{k,l_k+1}'}{} v_{k,l_k+1} \]
which is a subpath of $\BD(H_k)$. Here, $1\=< k\=< 2m$.
Then $(\Q^{\BD},\I^{\BD})$ contains the band
\begin{align}\label{formula:BDband}
   q_1q_2\cdots q_{2m},
\end{align}
where $q_1$, $q_2^{-1}$, $q_3$, $q_4^{-1}$, $\cdots$, $q_{2m-1}$,  $q_{2m}^{-1}$ are permitted paths.

On the other hand, assume that $(\Q^{\BD}, \I^{\BD})$ has a band,
then it is of the form shown in (\ref{formula:BDband}),
and the number of arrows of all $q_k^{(-1)^{k+1}}$ are odd.
Assume that \[ q_k = \ w_{k,1} \To{b_{k,1}}{} w_{k,2} \To{b_{k,2}}{}
  w_{k,3} \To{b_{k,3}}{} w_{k,4} \To{b_{k,4}}{} \cdots
  \To{b_{k,2m-2}}{} w_{k,2m-1} \To{b_{k,2m-1}}{} w_{k,2m} \]
is a subpath of $H_k^{\BD}$, where $H_k^{\BD}$ is a permitted thread on $(\Q^{\BD},\I^{\BD})$.
Then there is a unique permitted path
\[ p_k = \ w_{k,2} \To{}{} w_{k,4} \To{}{} \cdots \To{}{} w_{k,2m} \]
on $(\Q,\I)$ such that it is a subpath of some permitted thread $H_k$ on $(\Q,\I)$,
and $\BD(H_k) = H_k^{\BD}$. Then we obtain a band $p_1p_2\cdots p_{2m}$ on $(\Q,\I)$ as required.
\end{proof}

\begin{remark} \label{rmk:BDband} \rm
We provide another proof of Lemma \ref{lemm:BDband} by using marked surface in this remark.
To do this, we recall the definition of a permissible curve $c$ which is a sequence
\[ c = (c_{i,i+1})_{0 \=< i \=< m(c)} = c_{0,1} c_{1,2} \cdots c_{m(c), m(c)+1} \ (m(c)\in\NN) \]
of some segments given by one of the pictures given in \Pic \ref{fig:arc segment I}
(these segments are called {\defines $\bAS$-arc segments})
\begin{figure}[htbp]
\definecolor{ffqqqq}{rgb}{1,0,0}
\definecolor{bluearc}{rgb}{0,0,1}
\begin{tikzpicture}[scale=1.25]
% boundary
\draw[black] (-0.5,2)--( 0.5,2) [line width=1pt];
% FFASgreen
\draw[bluearc] ( 0, 2)--(-1, 0) [line width=1pt];
\draw[bluearc] ( 0, 2)--( 1, 0) [line width=1pt];
\fill[bluearc] ( 0, 2) circle (0.1cm);
% arc segment
\draw[orange][line width=1pt] (-0.5, 1) to[out=-45, in=-135] ( 0.5, 1);
% subcaption
\draw (0,-0.5) node{Case (1)};
\end{tikzpicture}
\ \
\begin{tikzpicture}[scale=1.25]
% boundary
\draw[black] (0,2) to[out=180,in=90] (-2,0) [line width=1pt];
% FFASgreen
\draw[bluearc] ( 0, 2) to[out=-90,in=0] (-2, 0) [line width=1pt];
\fill[bluearc] ( 0, 2) circle (0.1cm);
\fill[bluearc] (-2, 0) circle (0.1cm);
\draw[red][fill=white] (-1.41, 1.41) circle (0.1cm) [line width=1pt];
% arc segment
\draw[orange] (-1.41, 1.41) -- (-0.57, 0.57) [line width=1pt];
% subcaption
\draw (-1,-0.5) node{Case (2)};
\end{tikzpicture}
\caption{$\bAS$-arc segments}
\label{fig:arc segment I}
\end{figure}
such that:
\begin{itemize}
  \item[(1)] $c(t_i) = c_{(i,i+1)}(t_i)$ and $c(t_{m(c)+1})=c_{(m(c),m(c)+1)}(t_{m(c)+1})$,
    where $0=t_0 < t_1 < \cdots < t_{m(c)} < t_{m(c)+1}=1$
    (the curve $c$ and each $\bAS$-arc segment $c_{(i,i+1)}$ are seen as function
    $c:[0,1] \to \Surf$ and $c_{(i,i+1)}: [t_i, t_{i+1}] \to \Surf$, respectively);
  \item[(2)] two adjacent $\bAS$-arc segment $c_{(i,i+1)}$ and $c_{(i+1,i+2)}$ are different,
    i.e., $c_{(i,i+1)}\ne c_{(i+1,i+2)}$ and $c_{(i,i+1)}\ne c_{(i+1,i+2)}^{-1}$;
  \item[(3)] $c(0), c(1) \in \Marked$ are $\rbullet$-marked points,
    or $c(0)=c(1)$ lies in $\Surf^{\circ}$.
\end{itemize}

Next, we prove Lemma \ref{lemm:BDband}.
Clearly, a gentle algebra $A$ is representation-infinite if and only if there is a permissible curve without endpoint in $\bfS_A$, see the thickest curve in the first picture of \Pic \ref{fig:curve-band}.
In this case, the permissible curve without endpoint corresponds to a band on the bound quiver of $A$.
\begin{figure}[htbp]
\definecolor{ffqqqq}{rgb}{1,0,0}
\definecolor{qqwuqq}{rgb}{0,0,1}
\begin{tikzpicture} [scale=1.5]
% \shade[left color=black!5] (0,0) circle (2cm);
\draw    [line width=1.5pt] (0,0) circle (2cm)[dotted];
\filldraw[color = black!25] (0,0) circle (1cm);
\draw    [line width=1.5pt] (0,0) circle (1cm)[dotted];
\foreach \x in{0,90,180,270}
\draw [white][line width=12pt][rotate = \x](0,0) -- (1.2,0);
% open FFAS
\fill [qqwuqq!50] ( 2.  , 0.  ) circle (\pointsizeII);
\fill [qqwuqq] ( 1.  , 1.73) circle (\pointsizeII);
\fill [qqwuqq] ( 1.73, 1.  ) circle (\pointsizeII);
\fill [qqwuqq] ( 0.  , 2.  ) circle (\pointsizeII);
\fill [qqwuqq] (-1.  , 1.73) circle (\pointsizeII);
\fill [qqwuqq] (-1.73, 1.  ) circle (\pointsizeII);
\fill [qqwuqq] (-2.  , 0.  ) circle (\pointsizeII);
\fill [qqwuqq] (-1.73,-1.  ) circle (\pointsizeII);
\fill [qqwuqq] (-1.  ,-1.73) circle (\pointsizeII);
\fill [qqwuqq] ( 0.  ,-2.  ) circle (\pointsizeII);
\fill [qqwuqq] ( 1.  ,-1.73) circle (\pointsizeII);
\fill [qqwuqq] ( 1.73,-1.  ) circle (\pointsizeII);
\fill [qqwuqq] (0,1) circle (\pointsizeII)  [opacity=0];
\fill [qqwuqq] (-1.41*0.5,1.41*0.5) circle (\pointsizeII);
\fill [qqwuqq] (-1,0) circle (\pointsizeII) [opacity=0];
\fill [qqwuqq] (-1.41*0.5,-1.41*0.5) circle (\pointsizeII);
\fill [qqwuqq] (0,-1) circle (\pointsizeII) [opacity=0];
\fill [qqwuqq] (1.41*0.5,-1.41*0.5) circle (\pointsizeII);
\fill [qqwuqq] (1,0) circle (\pointsizeII)  [opacity=0];
\fill [qqwuqq] (1.41*0.5,1.41*0.5) circle (\pointsizeII);
\draw[qqwuqq][line width=1pt][dotted][opacity=0.5] (0,2)--(0,1);
\draw[qqwuqq][line width=1pt] (0,2)--(-1.41*0.5,1.41*0.5);
\draw[qqwuqq][line width=1pt] (0,2)--(1.41*0.5,1.41*0.5);
\draw[qqwuqq][line width=1pt][dotted][opacity=0.5] (-2,0)--(-1,0);
\draw[qqwuqq][line width=1pt] (-2,0)--(-1.41*0.5,1.41*0.5);
\draw[qqwuqq][line width=1pt] (-2,0)--(-1.41*0.5,-1.41*0.5);
\draw[qqwuqq][line width=1pt][dotted][opacity=0.5] (0,-2)--(0,-1);
\draw[qqwuqq][line width=1pt] (0,-2)--(-1.41*0.5,-1.41*0.5);
\draw[qqwuqq][line width=1pt] (0,-2)--(1.41*0.5,-1.41*0.5);
\draw[qqwuqq][line width=1pt][dotted][opacity=0.5] (2,0)--(1,0);
\draw[qqwuqq][line width=1pt][dotted][opacity=0.5] (2,0)--(1.41*0.5,1.41*0.5);
\draw[qqwuqq][line width=1pt][dotted][opacity=0.5] (2,0)--(1.41*0.5,-1.41*0.5);
\draw[qqwuqq][line width=1pt][dotted][opacity=0.5] (1.41*0.5,1.41*0.5) -- (1.41,1.41);
\draw[qqwuqq][line width=1pt] (1.41*0.5,1.41*0.5) -- (1.73,1);
\draw[qqwuqq][line width=1pt] (1.41*0.5,1.41*0.5) -- (1,1.73);
\draw[qqwuqq][line width=1pt][dotted][opacity=0.5] (-1.41*0.5,1.41*0.5) -- (-1.41,1.41);
\draw[qqwuqq][line width=1pt] (-1.41*0.5,1.41*0.5) -- (-1.73,1);
\draw[qqwuqq][line width=1pt] (-1.41*0.5,1.41*0.5) -- (-1,1.73);
\draw[qqwuqq][line width=1pt][dotted][opacity=0.5] (-1.41*0.5,-1.41*0.5) -- (-1.41,-1.41);
\draw[qqwuqq][line width=1pt] (-1.41*0.5,-1.41*0.5) -- (-1.73,-1);
\draw[qqwuqq][line width=1pt] (-1.41*0.5,-1.41*0.5) -- (-1,-1.73);
\draw[qqwuqq][line width=1pt][dotted][opacity=0.5] (1.41*0.5,-1.41*0.5) -- (1.41,-1.41);
\draw[qqwuqq][line width=1pt] (1.41*0.5,-1.41*0.5) -- (1.73,-1);
\draw[qqwuqq][line width=1pt] (1.41*0.5,-1.41*0.5) -- (1,-1.73);
\draw[orange][line width=2.5pt] (0,0) circle(1.5cm);
% note
\draw (0,-2.1) node[below]{$\bfS_A$};
\draw (0,-2.3)[opacity=0] node[below]{$x$};
\end{tikzpicture}
\ \
\begin{tikzpicture} [scale=1.5]
% \shade[left color=black!5] (0,0) circle (2cm);
\draw    [line width=1.5pt] (0,0) circle (2cm)[dotted];
\filldraw[color = black!25] (0,0) circle (1cm);
\draw    [line width=1.5pt] (0,0) circle (1cm)[dotted];
\foreach \x in{0,90,180,270}
\draw [white][line width=12pt][rotate = \x](0,0) -- (1.2,0);
% open FFAS
\fill[blue!50] ( 2.  , 0.  ) circle (\pointsizeII);
\fill[blue] ( 1.  , 1.73) circle (\pointsizeII);
\fill[blue] ( 1.73, 1.  ) circle (\pointsizeII);
\fill[blue] ( 0.  , 2.  ) circle (\pointsizeII);
\fill[blue] (-1.  , 1.73) circle (\pointsizeII);
\fill[blue] (-1.73, 1.  ) circle (\pointsizeII);
\fill[blue] (-2.  , 0.  ) circle (\pointsizeII);
\fill[blue] (-1.73,-1.  ) circle (\pointsizeII);
\fill[blue] (-1.  ,-1.73) circle (\pointsizeII);
\fill[blue] ( 0.  ,-2.  ) circle (\pointsizeII);
\fill[blue] ( 1.  ,-1.73) circle (\pointsizeII);
\fill[blue] ( 1.73,-1.  ) circle (\pointsizeII);
%\fill[blue] (0,1) circle (\pointsizeII)  [opacity=0];
\fill[blue] (-1.41*0.5,1.41*0.5) circle (\pointsizeII);
%\fill[blue] (-1,0) circle (\pointsizeII) [opacity=0];
\fill[blue] (-1.41*0.5,-1.41*0.5) circle (\pointsizeII);
%\fill[blue] (0,-1) circle (\pointsizeII) [opacity=0];
\fill[blue] (1.41*0.5,-1.41*0.5) circle (\pointsizeII);
%\fill[blue] (1,0) circle (\pointsizeII)  [opacity=0];
\fill[blue] (1.41*0.5,1.41*0.5) circle (\pointsizeII);
\draw[blue][line width=1pt][dotted][opacity=0.5] (0,2)--(0,1);
\draw[blue][line width=1pt] (0,2)--(-1.41*0.5,1.41*0.5);
\draw[blue][line width=1pt] (0,2)--(1.41*0.5,1.41*0.5);
\draw[blue][line width=1pt][dotted][opacity=0.5] (-2,0)--(-1,0);
\draw[blue][line width=1pt] (-2,0)--(-1.41*0.5,1.41*0.5);
\draw[blue][line width=1pt] (-2,0)--(-1.41*0.5,-1.41*0.5);
\draw[blue][line width=1pt][dotted][opacity=0.5] (0,-2)--(0,-1);
\draw[blue][line width=1pt] (0,-2)--(-1.41*0.5,-1.41*0.5);
\draw[blue][line width=1pt] (0,-2)--(1.41*0.5,-1.41*0.5);
\draw[blue][line width=1pt][dotted][opacity=0.5] (2,0)--(1,0);
\draw[blue][line width=1pt][dotted][opacity=0.5] (2,0)--(1.41*0.5,1.41*0.5);
\draw[blue][line width=1pt][dotted][opacity=0.5] (2,0)--(1.41*0.5,-1.41*0.5);
\draw[blue][line width=1pt][dotted][opacity=0.5] (1.41*0.5,1.41*0.5) -- (1.41,1.41);
\draw[blue][line width=1pt] (1.41*0.5,1.41*0.5) -- (1.73,1);
\draw[blue][line width=1pt] (1.41*0.5,1.41*0.5) -- (1,1.73);
\draw[blue][line width=1pt][dotted][opacity=0.5] (-1.41*0.5,1.41*0.5) -- (-1.41,1.41);
\draw[blue][line width=1pt] (-1.41*0.5,1.41*0.5) -- (-1.73,1);
\draw[blue][line width=1pt] (-1.41*0.5,1.41*0.5) -- (-1,1.73);
\draw[blue][line width=1pt][dotted][opacity=0.5] (-1.41*0.5,-1.41*0.5) -- (-1.41,-1.41);
\draw[blue][line width=1pt] (-1.41*0.5,-1.41*0.5) -- (-1.73,-1);
\draw[blue][line width=1pt] (-1.41*0.5,-1.41*0.5) -- (-1,-1.73);
\draw[blue][line width=1pt][dotted][opacity=0.5] (1.41*0.5,-1.41*0.5) -- (1.41,-1.41);
\draw[blue][line width=1pt] (1.41*0.5,-1.41*0.5) -- (1.73,-1);
\draw[blue][line width=1pt] (1.41*0.5,-1.41*0.5) -- (1,-1.73);
% 2025-1-20 10:42:53
%
\foreach \x in {0,90,180,270}
\fill[blue][rotate= \x] (-0.34, 1.97) circle (\pointsizeII);
\foreach \x in {0,90,180,270}
\draw[blue][rotate= \x] (-0.34, 1.97) to[out=-80,in=-90] (0,2) [line width=1pt][dashed];
\foreach \x in {0,90,180,270}
\fill[blue][rotate= \x] (-0.68, 1.87) circle (\pointsizeII);
\foreach \x in {0,90,180,270}
\draw[blue][rotate= \x] (-0.68, 1.87) -- (-0.71, 0.71) [line width=1pt][dashed];
\foreach \x in {0,90,180,270}
\draw[blue][rotate= \x] ( 0.26, 0.97) -- ( 0   , 2   ) [line width=1pt][dashed];
\foreach \x in {0,90,180,270}
\fill[blue][rotate= \x] ( 0.26, 0.97) circle (\pointsizeII);
\foreach \x in {0,90,180,270}
\draw[blue][rotate= \x] (-0.26, 0.97) -- ( 0   , 2   ) [line width=1pt][dotted][opacity=0.5];
\foreach \x in {0,90,180,270}
\draw[blue][rotate= \x] (-1.22, 1.58) -- (-0.71, 0.71) [line width=1pt][dotted][opacity=0.5];
\foreach \x in {0,90,180,270}
\fill[blue][rotate= \x] (-1.58, 1.22) circle (\pointsizeII);
\foreach \x in {0,90,180,270}
\draw[blue][rotate= \x] (-1.58, 1.22) -- (-0.71, 0.71) [line width=1pt][dashed];
\foreach \x in {0,90,180,270}
\fill[blue][rotate= \x] (-1.93, 0.52)  circle (\pointsizeII);
\foreach \x in {0,90,180,270}
\draw[blue][rotate= \x] (-1.93, 0.52) -- (-0.71, 0.71) [line width=1pt][dashed];
% band
\draw[orange][line width=2.5pt] (0,0) circle(1.5cm);
% note
\draw (0,-2.1) node[below]{$\bfS_{\BD(A)} \simeq \BD(\bfS_A)$};
\draw (0,-2.3)[opacity=0] node[below]{$x$};
\end{tikzpicture}
\caption{A permitted curve corresponding to a band (the $\rbullet$-FFAS $\rAS$ and $\rbullet$-marked points is ignored in this pictur for brevity)}
\label{fig:curve-band}
\end{figure}
Furthermore, by Proposition \ref{prop:BDsurface} (1), the marked surface $\bfS_{\BD(A)}$ of $\BD(A)$ is homotopic to $\BD(\bfS_A)$, it has a subsurface which is of the form shown in the second picture of \Pic \ref{fig:curve-band}.
It follows that the permissible curve without endpoint exists.
Thus, $\BD(A)$ is representation-infinite. Now, it is clear that $A$ is representation-infinite if and only if $\BD(A)$ is representation-infinite.
\end{remark}

\begin{proposition} \label{prop:repr type}
Let $A$ be a gentle algebra, then the following statements are equivalent:
\begin{itemize}
  \item[\rm(1)] $A$ is representation-finite;
  \item[\rm(2)] $\BD(A)$ is representation-finite;
  \item[\rm(3)] $A^{\CMA}$ is representation-finite.
\end{itemize}
\end{proposition}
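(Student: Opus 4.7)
The plan is to reduce representation-finiteness of each gentle algebra to a combinatorial condition on its bound quiver via the Butler--Ringel correspondence (Theorem~\ref{thm:BR}). For any gentle algebra $R$, the bijection
\[ \M : \Str(R) \cup (\Band(R) \times \mathscr{J}) \to \ind(\modcat R) \]
implies that $R$ is representation-finite if and only if $\Band(R) = \varnothing$: since $\mathscr{J}$ is infinite, the presence of any band produces infinitely many pairwise non-isomorphic indecomposables; conversely, in a finite-dimensional gentle algebra with no bands, a standard combinatorial argument shows that $\Str(R)$ is finite (any sufficiently long string must traverse a band). It therefore suffices to show that the bound quivers of $A$, $\BD(A)$, and $A^{\CMA}$ simultaneously admit a band or do not.

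The equivalence (1)$\iff$(2) is then a direct consequence of Lemma~\ref{lemm:BDband}, which establishes precisely the desired equivalence between bands of $A$ and bands of $\BD(A)$. Chaining: $A$ is representation-finite iff $A$ has no bands iff $\BD(A)$ has no bands iff $\BD(A)$ is representation-finite.

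For (1)$\iff$(3), the result is already recorded in \cite{CL2019}, so we may simply invoke it. Alternatively, one may reprove it using Proposition~\ref{prop:CMAsurface}: the passage from $\bfS_A$ to $\bfS_A^{\CMA}$ modifies only the $\infty$-elementary polygons, by adding $\rbullet$-marked points on each inner boundary component $\bcomp$ and attaching $\bbullet$-arcs joining them to the vertices of the polygon, while every polygon in $\bEP^{\pounds}$ is preserved. Interpreting bands as essential closed permissible curves in the surface (in the spirit of Remark~\ref{rmk:BDband}), such a curve in $\bfS_A$ can be isotoped into a closed permissible curve of $\bfS_A^{\CMA}$ by pushing across the newly added arcs inside each $\infty$-polygon, and conversely any essential closed permissible curve in $\bfS_A^{\CMA}$ collapses to one in $\bfS_A$.

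The main obstacle, already surmounted in Lemma~\ref{lemm:BDband}, is the bookkeeping needed to track bands through the BD-doubling: each permitted sub-path $p_k$ of a permitted thread $H_k$ of $A$ refines to a permitted sub-path $q_k$ of $\BD(H_k)$, and one must verify that the resulting zigzag concatenation $q_1 q_2 \cdots q_{2m}$ remains a band (not a non-trivial power, with $\beta^2$ a string), together with the converse observation that any band of $\BD(A)$ uses an even number of consecutive arrows from each thread $\BD(H_k)$ and so descends to a band of $A$. Once Lemma~\ref{lemm:BDband} and the CL2019 equivalence (or Proposition~\ref{prop:CMAsurface}) are in hand, Proposition~\ref{prop:repr type} follows as a short chain of biconditionals via Butler--Ringel.
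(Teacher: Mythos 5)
Your proposal is correct and follows essentially the same route as the paper: representation-finiteness is reduced via the Butler--Ringel correspondence (Theorem~\ref{thm:BR}) to the absence of bands, the equivalence (1)$\iff$(2) is obtained from Lemma~\ref{lemm:BDband}, and (1)$\iff$(3) is quoted from \cite{CL2019}. The extra justification you give for ``representation-finite iff no band'' and the sketched geometric alternative for (1)$\iff$(3) are harmless additions but do not change the argument.
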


\begin{proof}
Chen and Lu have proved that the representation types of $A$ and $A^{\CMA}$ coincide,
see \cite[Theorem 4.4]{CL2019}. On the other hand,
By Theorem \ref{thm:BR}, it is well-known that a gentle algebra is representation-infinite
if and only if its bound quiver has at least one band.
Then (1) and (2) are equivalent by Lemma \ref{lemm:BDband}.
\end{proof}

\begin{theorem} \label{mainthm:repr-type}
Let $\mathfrak{G} = \{ [A] \mid A=\kk\Q/\I \text{ is gentle } \}$ be a set of all isoclasses of gentle algebras,
where $[A]$ is the isoclass containing the gentle algebra $A$. Define
\[ \BD: \mathfrak{G} \to \mathfrak{G}, [A] \mapsto [\BD(A)]
\text{ and } (-)^{\CMA} : \mathfrak{G} \to \mathfrak{G}, [A] \mapsto [A^{\CMA}].  \]
Then $A$ is representation-finite if and only if for any finite sequence
$f_1, f_2, \cdots, f_n \in \{\BD, (-)^{\CMA}\}$,
$f_1f_2\cdots f_n(A)$ is representation-finite.
\end{theorem}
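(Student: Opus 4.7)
The plan is to derive the theorem as a straightforward iterated application of Proposition \ref{prop:repr type}, using the fact that both operations $\BD$ and $(-)^{\CMA}$ send gentle algebras to gentle algebras, so the set $\mathfrak{G}$ is closed under them. The ``only if'' direction (taking $n=0$ or $n=1$) is immediate; the real content is the ``if'' direction, which I would prove by induction on $n$.

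First, I would record the two closure statements explicitly. By Theorem \ref{thm:BD(gent)}, for any $A \in \mathfrak{G}$, the algebra $\BD(A)$ is again gentle, hence $\BD$ is a well-defined endomap of $\mathfrak{G}$. By Theorem \ref{thm:CL}, $A^{\CMA}$ is also gentle, so $(-)^{\CMA}$ is a well-defined endomap of $\mathfrak{G}$. Consequently, for any finite word $f_1 f_2 \cdots f_n$ in $\{\BD, (-)^{\CMA}\}$, the composition $f_1 f_2 \cdots f_n(A)$ is an iterated element of $\mathfrak{G}$, so the statement ``$f_1 \cdots f_n(A)$ is representation-finite'' is meaningful within the gentle framework.

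Next, I would run the induction. Suppose $A$ is representation-finite. If $n=1$, then by Proposition \ref{prop:repr type} both $\BD(A)$ and $A^{\CMA}$ are representation-finite, so $f_1(A)$ is representation-finite for either choice of $f_1$. Assume the claim holds for sequences of length $n$, and consider a sequence $f_1, \ldots, f_n, f_{n+1}$. Set $A' := f_2 \cdots f_{n+1}(A)$. By the induction hypothesis applied to the length-$n$ sequence $f_2, \ldots, f_{n+1}$, the algebra $A'$ is a representation-finite gentle algebra. Applying Proposition \ref{prop:repr type} once more to $A'$ yields that both $\BD(A')$ and $(A')^{\CMA}$ are representation-finite; in particular $f_1(A') = f_1 f_2 \cdots f_{n+1}(A)$ is representation-finite, completing the induction. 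The converse direction follows by taking the empty sequence (or any length-one sequence and invoking Proposition \ref{prop:repr type} in the opposite direction).

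I do not anticipate a genuine obstacle here: the whole statement is a bookkeeping corollary of Proposition \ref{prop:repr type} together with the closure of $\mathfrak{G}$ under $\BD$ and $(-)^{\CMA}$. The only mildly delicate point is verifying that the isomorphism classes are preserved by the two operations, so that the induced maps on $\mathfrak{G}$ are well-defined; this is immediate from Theorems \ref{thm:BD(gent)} and \ref{thm:CL}, which give the bound quiver (and hence the isomorphism class) of $\BD(A)$ and $A^{\CMA}$ in terms of that of $A$.
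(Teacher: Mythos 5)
Your proposal is correct and matches the paper's own argument: both reduce the theorem to an iterated application of Proposition \ref{prop:repr type}, using Theorems \ref{thm:BD(gent)} and \ref{thm:CL} to guarantee that $\BD$ and $(-)^{\CMA}$ keep you inside the class of gentle algebras, and then finish by induction on the length of the sequence. The only blemish is terminological: in your opening paragraph you swap the labels ``if'' and ``only if'' (the direction requiring induction is the ``only if'' direction, $A$ representation-finite implying all $f_1\cdots f_n(A)$ representation-finite), but the argument you actually carry out is the right one.
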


\begin{proof}
By Proposition \ref{prop:repr type}, $A$ is representation-finite if and only if $f_1(A)$ is representation-finite.
By Theorems \ref{thm:CL} and \ref{thm:BD(gent)}, $f_1(A)$ is also a gentle algebra.
Thus, by Proposition \ref{prop:repr type}, $f_1(A)$ is representation-finite if and only if $f_2f_1(A)$ is representation-finite.
One can show that this theorem holds by induction.
\end{proof}

\subsection{Derived representation types of gentle algebras} \label{subsect:D repr type}

Now we recall the definitions of homotopy string and homotopy band which are used to describe
the derived category $\Dcat^b(A)$ of a gentle algebra $A$ in \cite{ALP2016}.
%In marked surface, Homotopy strings and homotopy bands corresponds to admissible curve.
%Thus, marked surface provides another description of derived category,
%see for example \cite[etc]{OPS2018,APS2023,QZZ2022}.

\begin{definition} \rm \label{def:hstr hband}
A {\defines homotopy string} is a sequence $\hstr = p_1p_2\cdots p_m$ such that the following conditions hold.
\begin{itemize}
  \item[(1)] Each $p_i$ ($1 \=< i\=< m$) is either a nonzero path or the formal inverse of a non-zero path.
  \item[(2)] For $p_i=x_{i,1}\cdots x_{i,l_i}$ and $p_{i+1}=x_{i+1,1}\cdots x_{i+1,l_i}$
    ($x_{i,1}$, $\ldots$, $x_{i,2}$, $x_{i+1,1}$, $\ldots$, $x_{i+1,l_i}$ $\in \Q_1\cup\Q_1^{-1}$), we have:
  \begin{itemize}
    \item $\target(p_i) = \source(p_{i+1})$;
    \item if $p_i$ and $p_{i+1}$ are paths, then $x_{i,l_i}x_{i+1,1}\in\I$;
    \item if $p_i$ and $p_{i+1}$ are formal inverse of paths, then $x_{i+1,1}^{-1}x_{i,l_i}^{-1}\in\I$.
  \end{itemize}
  \item[(3)] If $p_i$ is a path and $p_{i+1}$ is a formal inverse of a path,
    then $x_{i,l_i} \ne x_{i+1,1}^{-1}$;
    if $p_i$ is a formal inverse of a path and $p_{i+1}$ is a path,
    then $x_{i,l_i}^{-1} \ne x_{i+1,1}$.
\end{itemize}
Each $p_i$ is called a {\defines homotopy letter}. Furthermore,
if $p_i$ is a path, then it is called a {\defines directed homotopy letter},
if $p_i$ is a formal inverse of a path, then it is called an {\defines inverse homotopy letter}
Two homotopy strings $\hstr$ and $\hstr'$ are said to be {\defines equivalent},
write it as $\hstr \simeq \hstr'$, if one of $\hstr = \hstr$ and $\hstr = (\hstr)^{-1}$ holds.

A {\defines homotopy band} is a homotopy string $\hband = p_1p_2\cdots p_m$ with $\target(p_m)=\source(p_1)$ satisfying the following conditions.
\begin{itemize}
  \item[(1)] It is not a non-trivial power of any homotopy string.
  \item[(2)] Its square $\hband^2$ is a homotopy string.
  \item[(3)] It has an equal number of direct and inverse homotopy letters $p_i$.
\end{itemize}
\end{definition}

For a gentle algebra $A=\kk\Q/\I$ and its BD-gentle algebra $\BD(A)=\kk\Q^{\BD}/\I^{\BD}$,
the following lemma shows that the existence of homotopy band on $(\Q,\I)$ and that of homotopy band on $(\Q^{\BD},\I^{\BD})$ coincide.

\begin{lemma} \label{lemm:BDhband}
The bound quiver $(\Q,\I)$ of a gentle algebra $A=\kk\Q/\I$ has a homotopy band
if and only if the bound quiver $(\Q^{\BD},\I^{\BD})$ of $\BD(A)$ has a homotopy band.
\end{lemma}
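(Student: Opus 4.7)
The plan is to mirror the strategy of Lemma \ref{lemm:BDband}, exploiting the same thread-by-thread correspondence between paths of $A$ and paths of $\BD(A)$, but enriched with a careful analysis of the extra junction conditions built into Definition \ref{def:hstr hband}. For the forward direction, I would start with a homotopy band $\hband = p_1 p_2 \cdots p_m$ in $(\Q,\I)$. Each homotopy letter $p_k$ (or $p_k^{-1}$, if $p_k$ is inverse-oriented) is a nonzero path in $A$ and is therefore a subpath of a unique permitted thread $H_{k_k}$ of $A$, running from some $v_{k,1}$ to some $v_{k,l_k+1}$. By Theorem \ref{thm:BD(gent)}, $H_{k_k}$ embeds into the BD-thread $\BD(H_{k_k})\cong \pmbA_{2m_{k_k}}$, in which every two consecutive unprimed vertices are connected through a new primed vertex. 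Accordingly, I define $q_k$ (or $q_k^{-1}$) to be the corresponding interleaved path $v_{k,1}\to v_{k,2}'\to v_{k,2}\to \cdots \to v_{k,l_k+1}'\to v_{k,l_k+1}$ in $\BD(H_{k_k})$, and set $\hband^{\BD}:= q_1q_2\cdots q_m$.

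Next, I would verify that $\hband^{\BD}$ satisfies the homotopy-band conditions. The nonzeroness of each $q_k^{\pm 1}$ and the source/target matching at every junction follow immediately, since the unprimed endpoints of $q_k$ coincide with those of $p_k$ and are identified in $\BD(A)$ exactly as in $A$ via $\simeq_{\gentleSet}$. The critical verification is Definition \ref{def:hstr hband}(2) at a junction where $p_i$ and $p_{i+1}$ have the same orientation: the relation $x_{i,l_i}x_{i+1,1}\in \I$ in $A$ forces the last arrow of $p_i$ and the first arrow of $p_{i+1}$ to lie in distinct permitted threads of $A$, hence, by Theorem \ref{thm:BD(gent)}, into distinct BD-threads of $\BD(A)$; since $\BD(A)$ is gentle, the composition of the last arrow of $q_i$ (entering the junction vertex $v$ from a primed vertex of $\BD(H_{k_i})$) with the first arrow of $q_{i+1}$ (exiting $v$ into a primed vertex of $\BD(H_{k_{i+1}})$) therefore lies in $\I^{\BD}$. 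No-backtracking, non-triviality of $\hband^{\BD}$ as a power, the square being a homotopy string, and the equal count of direct and inverse letters transfer automatically, since $p_k\mapsto q_k$ preserves orientation and is injective on cyclic rotations.

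For the backward direction, the key structural fact is that in $\BD(A)$ every primed vertex $(k,j)'$ lies inside a single BD-thread $\BD(H_k)$, has in-degree one and out-degree one, and is not identified with any other vertex of $\Q_0^{\BD}$. From this I derive the crucial claim that no junction of a homotopy band of $\BD(A)$ can occur at a primed vertex: a direction-changing junction at $(k,j)'$ would force the two meeting paths to share the unique incoming (respectively outgoing) arrow, violating the no-backtracking condition of Definition \ref{def:hstr hband}(3); and a same-orientation junction with a relation cannot occur at $(k,j)'$ because $\I^{\BD}$ contains no relation localised at a primed vertex. Hence every $q_k$ in a homotopy band of $\BD(A)$ begins and ends at unprimed vertices inside some BD-thread $\BD(H_{k_k})$, has even length, and collapses canonically (by deleting the intermediate primed vertices) to a nonzero permitted path $p_k$ in $H_{k_k}$; running the junction analysis in reverse assembles these into a homotopy band $\hband = p_1\cdots p_m$ of $A$. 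The main obstacle is the bookkeeping of thread-membership for the arrows at same-orientation junctions, on which the whole relation-correspondence turns; the structural fact about primed vertices is the other essential piece that unlocks the backward direction, and everything else is routine case-checking.
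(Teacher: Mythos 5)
Your proposal is correct and takes essentially the same route as the paper: lift each homotopy letter through the thread decomposition of Theorem \ref{thm:BD(gent)} by interleaving the primed vertices, check the relation/no-backtracking conditions at the (unprimed, glued) junction vertices, and for the converse collapse the primed vertices. In fact you are more careful than the paper on two points: your lifted letter correctly runs from an unprimed vertex to an unprimed vertex (the paper's displayed formula (\ref{formula:BDhband}) starts at the primed vertex $v_{i,1}'$, which taken literally would violate $\target(q_i)=\source(q_{i+1})$), and you actually justify the converse's key assertion that every homotopy band of $\BD(A)$ has the lifted form, via the observation that primed vertices have in- and out-degree one and carry no relations of $\I^{\BD}$ --- the paper states this form without argument; the only nitpick is that at a same-orientation junction the two arrows need not lie in \emph{distinct} permitted threads (a thread may be glued to itself), only at distinct positions of $\gentleSet$, which is all your argument actually uses.
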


\begin{proof}
If $(\Q,\I)$ has a homotopy band $\hband$, assume that it is of the form
\begin{align}\label{formula:hband}
  \hband = p_1p_2\cdots p_m
\end{align}
\begin{center}
  (all $p_i$ are the homotopy letters given by $v_{i,1}\To{}{} v_{i,2}\To{}{} \cdots \To{}{} v_{1,l_i}$),
\end{center}
then each $p_i$ can be seen as a permitted path which is a subpath of some permitted thread $H_i$.
By Theorem \ref{thm:BD(gent)}, $p_i$ corresponding to the homotopy letter $q_i$ given by the following permitted path
\begin{align}\label{formula:BDhband}
  v_{i,1}' \To{a_{i,1}'}{} v_{i,1} \To{a_{i,1}''}{}
  v_{i,2}' \To{a_{i,2}'}{} v_{i,2} \To{a_{i,1}''}{} \cdots \To{}{}
  v_{1,l_i}' \To{a_{i,l_i}'}{} v_{1,l_i}.
\end{align}
The path above is a subpath of the permitted thread $H_i^{\BD}$ on $(\Q^{\BD},\I^{\BD})$.
Then we obtain that
\[ \hband^{\BD} = q_1q_2\cdots q_m \]
\begin{center}
  (all $q_i$ are the homotopy letters given by $p_i$)
\end{center}
is a homotopy band on $(\Q^{\BD},\I^{\BD})$ as required.

Conversely, if $(\Q^{\BD},\I^{\BD})$ has a homotopy band,
then it must be of the form given by (\ref{formula:BDhband}).
Using Theorem \ref{thm:BD(gent)} (or Lemma \ref{lemm:BD(Am)}),
we obtain a homotopy band on $(\Q,\I)$ which is given by
removing all vertices $v_{i,1}'$, $v_{i,2}'$, $\ldots$, $v_{i,l_i}'$
and all arrows $a_{i,1}'$, $a_{i,2}'$, $\ldots$, $a_{i,l_i}'$ ($1 \=< i \=< m$),
respectively reset targets of arrows $a_{i,1}''$, $\ldots$, $a_{i,l_i-1}''$
to $v_{i,2}$, $\ldots$, $v_{i,l_i}$.
It is of the form (\ref{formula:hband}).
\end{proof}

\begin{lemma} \label{lemm:CMAhband}
The bound quiver $(\Q,\I)$ of a gentle algebra $A=\kk\Q/\I$ has a homotopy band
if and only if the bound quiver $(\Q^{\CMA},\I^{\CMA})$ of $A^{\CMA}$ has a homotopy band.
\end{lemma}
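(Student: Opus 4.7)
The plan is to mirror the proof of Lemma \ref{lemm:BDhband}, but with the CM-Auslander refinement of Chen--Lu (recalled just before Theorem \ref{thm:CL}) replacing the BD-refinement: every arrow $\alpha \in \Q_1^{\cyc}$ is to be split as $\alpha \leadsto \alpha^-\alpha^+$ (with a fresh middle vertex $\alpha A$), while arrows of $\Q_1^{\ncyc}$ remain untouched. A preliminary structural observation I will use throughout is that, by gentleness, every relation $\beta\alpha \in \I$ has $\alpha,\beta$ either both in $\Q_1^{\cyc}$ or both in $\Q_1^{\ncyc}$: if $\alpha$ lies on a forbidden cycle $a_0\cdots a_{\ell-1}$ with $\alpha = a_i$, the unique $\beta$ with $\beta\alpha \in \I$ is $a_{i-1}$, which is itself on the cycle. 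This matches exactly the two clauses defining $\I^{\CMA}$.

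Forward direction: starting from a homotopy band $\hband = p_1 p_2 \cdots p_m$ in $(\Q,\I)$, I define $\hband^{\CMA} = \hat p_1 \hat p_2 \cdots \hat p_m$ by replacing each occurring arrow $\alpha \in \Q_1^{\cyc}$ inside a directed letter $p_i$ by $\alpha^-\alpha^+$, and each $\alpha^{-1}$ inside an inverse letter by $(\alpha^+)^{-1}(\alpha^-)^{-1}$. What I must verify is: (i) each $\hat p_i$ is a nonzero homotopy letter of $A^{\CMA}$, which reduces to the remarks that $\alpha^-\alpha^+ \notin \I^{\CMA}$, that surviving internal junctions are in $\I^{\CMA}$ iff the original junctions were in $\I$ (using the cyc/ncyc dichotomy), and similarly for inverse letters; (ii) the external junction conditions (2) of Definition \ref{def:hstr hband} transfer between $\I$ and $\I^{\CMA}$ by the same dichotomy; (iii) conditions (1) and (3) of being a homotopy band are preserved, where aperiodicity is handled by noting that the refinement is injective on sequences and commutes with taking powers.

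Backward direction: starting from a homotopy band $\hband' = q_1 q_2 \cdots q_n$ in $(\Q^{\CMA},\I^{\CMA})$, the core step is to establish the structural claim

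\emph{No letter $q_i$ may start or end at a new vertex $\alpha A$.}

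The proof is a small case analysis. Suppose $\target(q_i) = \alpha A$; then $\source(q_{i+1}) = \alpha A$. Since $\alpha A$ has $\alpha^-$ as its unique incoming arrow and $\alpha^+$ as its unique outgoing one, the last arrow of $q_i$ lies in $\{\alpha^-,(\alpha^+)^{-1}\}$ and the first arrow of $q_{i+1}$ lies in $\{\alpha^+,(\alpha^-)^{-1}\}$. Running through the four direct/inverse sign patterns for $(q_i,q_{i+1})$, each configuration either demands $\alpha^-\alpha^+ \in \I^{\CMA}$ or $\alpha^+\alpha^- \in \I^{\CMA}$ (both false), or violates the no-backtracking clause (3) of Definition \ref{def:hstr hband} (the last arrow equals the inverse of the first). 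Hence the claim. It follows that inside each $q_i$, every occurrence of $\alpha^-$ is immediately followed by $\alpha^+$ and vice versa, so I can contract each pair $\alpha^-\alpha^+$ back to $\alpha$ (resp. $(\alpha^+)^{-1}(\alpha^-)^{-1}$ to $\alpha^{-1}$), obtaining a sequence of paths and inverses $p_1,\ldots,p_n$ in $(\Q,\I)$. The same relation/refinement bookkeeping as in the forward direction shows that $\hband = p_1 p_2 \cdots p_n$ satisfies conditions (1)--(3) of being a homotopy band; the equal-count and aperiodicity conditions transfer because contraction and refinement are mutually inverse bijections on the relevant sequences.

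The main obstacle is the structural claim in the backward direction. This is where the asymmetric $+/-$ structure of the new vertices (one incoming, one outgoing arrow, of \emph{opposite} signs) must be combined finely with clauses (2) and (3) of Definition \ref{def:hstr hband} to eliminate all four sign patterns; once it is in place, everything else is parallel to the BD case and amounts to bookkeeping against the dictionary between $\I$ and $\I^{\CMA}$.
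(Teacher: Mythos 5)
Your proof follows essentially the same route as the paper's: refine each arrow on a forbidden cycle to $\alpha^-\alpha^+$ letter by letter in the forward direction, and contract back in the reverse direction after showing that no homotopy letter can begin or end at a new vertex $\alpha A$. The only difference is one of detail: where the paper simply asserts (citing Theorem \ref{thm:CL}) that an occurrence of $\alpha^{\pm}$ in a letter forces the matching $\alpha^{\mp}$ in the same letter, you justify this with an explicit four-case analysis using the relations $\beta^+\alpha^-\in\I^{\CMA}$ and the no-backtracking clause, which is a correct and welcome elaboration of the same argument.
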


\begin{proof}
For each homotopy band $\hband=p_1p_2\cdots p_m$ on $(\Q,\I)$,
all arrows on each $p_i = x_{i,1}x_{i,2}\cdots x_{i,l_i}$ ($1\=< i\=< m$) can be divided to two classes:

(1) arrows on some forbidden cycle;

(2) arrows that do not on any forbidden cycle.

Assume that $p_i$ is directed. By Theorem \ref{thm:CL}, if $x_{i,j}$ ($1\=< i\=< l_i$),
as an arrow on the quiver $\Q$, is an arrow on some forbidden cycle of $(\Q,\I)$,
then it changes to the composition $x_{i,j}^{-}x_{i,j}^{+}$ which is a non-zeor path of length two on $(\Q^{\CMA},\I^{\CMA})$;
otherwise, $x_{i,j}$ can be seen as an arrow on $\Q^{\CMA}$ naturally.
Thus, each homotopy letter $p_i$, as a nonzero path on $(\Q,\I)$, of $\hband$ corresponds to the following path on $(\Q^{\CMA},\I^{\CMA})$.
\[
q_i = y_{i,1}y_{i,2}\cdots y_{i,l_i}, \text{ where }
y_{i,j}
= \begin{cases}
x_{i,j}, \text{ if $x_{i,j}$ is an arrow on some forbidden cycle;} \\
x_{i,j}^{-}x_{i,j}^{+},  \text{ otherwise. }
\end{cases}
\]
The case for $p_i$ to be inverse is similar, it also corresponds to a formal inverse $q_i$ of a path on $(\Q^{\CMA},\I^{\CMA})$.
Then we obtain a sequence $\hband^{\CMA}=q_1q_1\cdots q_m$ which is a homotopy band on $(\Q^{\CMA},\I^{\CMA})$
(all $q_i$ are homotopy letters of $\hband^{\CMA}$) as required.

Conversely, if $(\Q^{\CMA},\I^{\CMA})$ has a homotopy band $\hband^{\CMA} = q_1q_2\cdots q_m$,
then for each homotopy letter $q_i$, if there is an arrow $\alpha^{+}$ (resp. $\alpha^{-}$) on $q_i$ which is obtained by an arrow $\alpha$ on $\Q$,
then we obtain that $\alpha^{-}$ (resp. $\alpha^{+}$) must be an arrow on $q_i$ by Theorem \ref{thm:CL}.
Thus, if $q_i$ is directed (resp. inverse), then it (resp. its formal inverse) is of the form
\[ \Path_{i} = y_{i,1}y_{i,2}\cdots y_{i,l_i}, \]
where each $y_{i,j}$ ($1\=< j\=< l_i$) is either an arrow $x_{i,j}$ which can be seen as an arrow on $\Q$
or a composition $x_{i,j}^{-}x_{i,j}^{+}$ of length two.
Notice that each $\alpha^{-}\alpha^{+}$ corresponds to an arrow $\alpha$ on $\Q$,
then the path $\Path_{i}$ corresponds to
\[ x_{i,1}x_{i,2}\cdots x_{i,l_i} :=
\begin{cases}
 p_i, & \text{if } q_i \text{ is directed; } \\
 p_i^{-1}, & \text{if } q_i \text{ is inverse }
\end{cases} \]
which is a path on $(\Q,\I)$.
Thus, we obtain a homotopy string $\hband = p_1p_2\cdots p_m$ which is a homotopy band on $(\Q,\I)$.
\end{proof}

\begin{remark} \label{rmk:BDhband CMAhband} \rm
We provide another proof of Lemmata \ref{lemm:BDhband} and \ref{lemm:CMAhband} by using marked surface in this remark.
Similar to Remark \ref{rmk:BDband}, we recall the definition of admissible curves.
Notice that if the marked surface $\bfS_A$ of a gentle algebra $A$ is used to describe the derived category $\Dcat^b(A)$,
we need assign a grading $\foliation$, say {\defines forliation}, of $\bfS_A$
which is a section of a projectivized tangent bundle $\mathbb{P}(T\Surf)$ of $\Surf$, cf. \cite{HKK2017, QZZ2022}.
Then the geometric model of $\Dcat^b(A)$ is defined as some {\defines graded marked surface}
$\bfS_A^{\foliation} = (\Surf_A, \Marked_A, \widetilde{\rAS}, {\foliation})$,
where $\widetilde{\rAS} = \{ \widetilde{\redanoind} \mid \redanoind \in \rAS\}$.
Here, for each smooth curve $c: [0,1] \to \Surf$, $\tc$ is a homotopy class of paths in the tangent space $\mathbb{P}(T_{c(t)}\Surf)$ of $\Surf$ at $c(t)$ from the subspace given by $\foliation$ to the tangent space of the curve, varying continuously with $t\in [0,1]$. We call $\tc$ is a {\defines grading} of $c$.
An {\defines admissible curve} is a pair $(c,\tc)$ which is composed of two data:
a curve $c$ and its grading $\tc$.
Here, $c$ is a sequence
\[ c = (c_{[i,i+1]})_{0 \=< i \=< n(c)} = c_{[0,1]} c_{[1,2]} \cdots c_{[n(c), n(c)+1]} \ (n(c)\in\NN) \]
of some segments given by one of the pictures given in \Pic \ref{fig:arc segment II}
\begin{figure}[htbp]
\centering
\begin{tikzpicture}[scale=1.25]
\draw[black] (-0.5*1.5, -0.86*1.5) -- ( 0.5*1.5, -0.86*1.5) [line width=1pt];
% FFASred
\draw[red] ( 0.5*1.5,-0.86*1.5) -- ( 1*1.5, 0) [line width=1pt];
\draw[red] ( 1*1.5, 0) -- ( 0.5*1.5, 0.86*1.5) [line width=1pt][dotted];
\draw[red] (-0.5*1.5, 0.86*1.5) -- ( 0.5*1.5, 0.86*1.5) [line width=1pt];
\draw[red] (-1*1.5, 0) -- (-0.5*1.5, 0.86*1.5) [line width=1pt][dotted];
\draw[red] (-0.5*1.5,-0.86*1.5) -- (-1*1.5, 0) [line width=1pt];
\fill[red] ( 0.5*1.5,-0.86*1.5) circle (0.1cm);
\fill[white] ( 0.5*1.5, -0.86*1.5) circle (1.8pt);
\fill[red] ( 1*1.5, 0) circle (0.1cm);
\fill[white] ( 1*1.5, 0) circle (1.8pt);
\fill[red] ( 0.5*1.5, 0.86*1.5) circle (0.1cm);
\fill[white] ( 0.5*1.5, 0.86*1.5) circle (1.8pt);
\fill[red] (-0.5*1.5, -0.86*1.5) circle (0.1cm);
\fill[white] (-0.5*1.5, -0.86*1.5) circle (1.8pt);
\fill[red] (-1*1.5, 0) circle (0.1cm);
\fill[white] (-1*1.5, 0) circle (1.8pt);
\fill[red] (-0.5*1.5, 0.86*1.5) circle (0.1cm);
\fill[white] (-0.5*1.5, 0.86*1.5) circle (1.8pt);
\fill[blue] (0,-0.86*1.5) circle (0.1cm) [line width=1pt];
% arc segment
\draw[violet] (0,-0.86*1.5) -- (0, 0.86*1.5) [line width=1pt];
% subcaption
\draw (0,-1.7) node{Case (1)};
\end{tikzpicture}
\ \
\begin{tikzpicture}[scale=1.25]
% boundary
%\draw[black] (-0.5*1.5, -0.86*1.5) -- ( 0.5*1.5, -0.86*1.5) [line width=1pt];
% FFASred
\draw[red] ( 0.5*1.5,-0.86*1.5) -- ( 1*1.5, 0) [line width=1pt][dotted];
\draw[red] ( 1*1.5, 0) -- ( 0.5*1.5, 0.86*1.5) [line width=1pt];
\draw[red] (-0.5*1.5, 0.86*1.5) -- ( 0.5*1.5, 0.86*1.5) [line width=1pt][dotted];
\draw[red] (-1*1.5, 0) -- (-0.5*1.5, 0.86*1.5) [line width=1pt];
\draw[red] (-0.5*1.5,-0.86*1.5) -- (-1*1.5, 0) [line width=1pt][dotted];
\fill[red] ( 0.5*1.5,-0.86*1.5) circle (0.1cm);
\fill[white] ( 0.5*1.5, -0.86*1.5) circle (1.8pt);
\fill[red] ( 1*1.5, 0) circle (0.1cm);
\fill[white] ( 1*1.5, 0) circle (1.8pt);
\fill[red] ( 0.5*1.5, 0.86*1.5) circle (0.1cm);
\fill[white] ( 0.5*1.5, 0.86*1.5) circle (1.8pt);
\fill[red] (-0.5*1.5, -0.86*1.5) circle (0.1cm);
\fill[white] (-0.5*1.5, -0.86*1.5) circle (1.8pt);
\fill[red] (-1*1.5, 0) circle (0.1cm);
\fill[white] (-1*1.5, 0) circle (1.8pt);
\fill[red] (-0.5*1.5, 0.86*1.5) circle (0.1cm);
\fill[white] (-0.5*1.5, 0.86*1.5) circle (1.8pt);
%\fill[bluearc] (0,-0.86*1.5) circle (0.1cm) [line width=1pt];
% arc segment
\draw[violet] (-1.15, 0.86*0.75) -- ( 1.15, 0.86*0.75) [line width=1pt];
% subcaption
\draw (0,-1.7) node{Case (2)};
\end{tikzpicture}
\caption{$\rAS$-arc segments}
\label{fig:arc segment II}
\end{figure}
(these segments are called {\defines $\rAS$-arc segments}) such that:
\begin{itemize}
  \item[(1)] $c(t_i) = c_{[i,i+1]}(t_i)$ and $c(t_{n(c)+1})=c_{[n(c),n(c)+1]}(t_{n(c)+1})$,
    where $0=t_0 < t_1 < \cdots < t_{n(c)} < t_{n(c)+1}=1$
    (the curve $c$ and each $\rAS$-arc segment $c_{[i,i+1]}$ are seen as function
    $c:[0,1] \to \Surf$ and $c_{[i,i+1]}: [t_i, t_{i+1}] \to \Surf$, respectively);
  \item[(2)] two adjacent $\bAS$-arc segment $c_{[i,i+1]}$ and $c_{[i+1,i+2]}$ are different,
    i.e., $c_{[i,i+1]}\ne c_{[i+1,i+2]}$ and $c_{[i,i+1]}\ne c_{[i+1,i+2]}^{-1}$;
  \item[(3)] $c(0), c(1) \in \Marked$ are $\bbullet$-marked points,
    or $c(0)=c(1)$ lies in $\Surf^{\circ}$;
  \item[(4)] $c$ is smooth, and if $c(0)=c(1)$, then $\tc(0)=\tc(1)$.
\end{itemize}

By \cite{QZZ2022}, each homotopy string $\hstr = p_1p_2\cdots p_{n-1}$ ($n\>= 1$)
corresponds to an admissible curve $\tc$ with $c = c_{[0,1]}c_{[1,2]}c_{[2,3]}\cdots c_{[n(c),n(c)+1]}$
such that each homotopy letter $p_i$ of $\hstr$ corresponds to the $\rAS$-arc segment $c_{[i,i+1]}$.
Under the correspondence as above, if $p_i$ is directed (resp. inverse),
then $c_{[i,i+1]}$ lies in some $\rAS$-elementary polygon $\redP$
such that $\marked(\redP)$, the $\bbullet$-marked point in the side $\partial\redP$, is left (resp. right) to $c_{[i,i+1]}$.
We call that $c_{[i,i+1]}$ is {\defines relatively directed} (resp. {\defines relatively inverse})
In particular, if $\hstr$ is a homotopy band, then $\tc$ is an admissible curve without endpoint.
In this case, the number of relatively directed $\rAS$-arc segments of $\tc$ equals that of relatively inverse $\rAS$-arc segments of $\tc$.

Next, we prove Lemma \ref{lemm:BDhband}.
Clearly, if $A$ is derived-discrete if and only if there is an admissible curve without endpoint as above in $\bfS_A^{\foliation}$, see the thickest curve in the first picture of \Pic \ref{fig}.
\begin{figure}[htbp]
\centering
% Requires \usepackage{graphicx}
\hspace{4mm}
\begin{tikzpicture}[scale=1.5]
\draw[line width = 1.5pt][fill=black!25][dotted] (0,0) circle (1cm);
\draw[line width = 1.5pt](1,0) arc(0:135:1);
%\draw[line width = 1.5pt][shift={(0.75,0.15)}] ( 0.00, 1.50) to[out= -30,in=-150] ( 1.00, 1.50);
%\draw[line width = 1.5pt][shift={(0.75,0.15)}] ( 0.20, 1.43) to[out=  30,in= 150] ( 0.80, 1.43);
% red FFAS
\draw[line width = 1pt][red]
  (-3.00, 0.00) -- (-1.00, 0.00) arc(90:315:0.40);
\draw[line width = 1pt][red][rotate=45][dotted]
  (-1.00, 0.00) arc(90:315:0.40);
\draw[line width = 1pt][red][rotate=90][dotted]
  (-3.00, 0.00) -- (-1.00, 0.00) arc(90:315:0.40);
\draw[line width = 1pt][red][rotate=135]
  (-1.00, 0.00) arc(90:315:0.40);
%\draw[line width = 1pt][red]
%  ( 1.00, 0.00) to[out=  80,in= -90] ( 2.50, 1.50) to[out=  90,in=   0]
%  ( 1.50, 2.50) to[out= 180,in=  90] ( 0.00, 1.00) to[out=  90,in=   0]
%  (-1.00, 2.00);
\draw[line width = 1pt][ red ]
  ( 1.00, 0.00) to[out=  80,in= -90] ( 2.50, 1.50)
  ( 1.50, 2.50) to[out= 180,in=  90] ( 0.00, 1.00) to[out=  90,in=   0] (-1.00, 2.00);
\draw[line width = 1pt][ red ][dotted] ( 2.50, 1.50) to[out=  90,in=   0] ( 1.50, 2.50);
\draw[line width = 1pt][ red ][dotted] (-3.00, 0.00) arc(180:210:3);
\draw[line width = 1pt][ red ][dotted] (-1.50,-2.60) arc(240:270:3);
\draw[line width = 1pt][ red ][dotted] ( 0.00,-3.00) arc(270:300:3);
\draw[line width = 1pt][ red ][dotted] ( 2.60,-1.50) arc(330:395:3);
\draw[line width = 1pt][ red ][dotted] ( 0.00, 1.00) -- ( 0.00, 3.00);
\draw[line width=1.5pt][black] (-2.60,-1.50) arc(210:240:3);
\draw[line width=1.5pt][black] ( 1.50,-2.60) arc(300:330:3);
% red marked points
\foreach \x in {0,30,60,90,120,150,213}
\draw[line width = 1pt][ red ][fill=white][rotate = \x] (-3.00, 0.00) circle(\pointsizeIII);
\foreach \x in {0,45,...,180,270}
\draw[line width = 1pt][ red ][fill=white][rotate = \x] (-1.00, 0.00) circle(\pointsizeIII);
\draw[line width = 1pt][ red ][fill=white] ( 1.50, 2.50)  circle(\pointsizeIII);
% blue marked points
\foreach \x in {225,292.5}
\fill[blue][rotate = \x] (-1.00, 0.00) circle(\pointsizeIII);
\foreach \x in {45,135}
\fill[blue][rotate = \x] (-3.00, 0.00) circle(\pointsizeIII);
% homotopy band
\draw[line width = 2pt][orange]
  (-2.00, 0.00) to[out= -90,in= 180] ( 0.00,-2.00) to[out=   0,in= -90]
  ( 2.00, 0.00) to[out=  90,in=   0] ( 0.00, 2.00) to[out= 180,in=  45]
  (-1.41, 1.41);
\draw[line width = 2pt][orange][dotted]
  (-1.41, 1.41) arc(135:180:2);
% blue FFAS
\draw[blue][line width =.8pt] (-2.12,-2.12) -- (-2.00, 0.00);
\draw[blue][line width =.8pt] (-2.12,-2.12) -- (-1.38,-0.55);
\draw[blue][line width =.8pt] (-2.12,-2.12) to[out=100,in=-30] (-2.90,-0.78) [dotted];
\draw[blue][line width =.8pt] (-2.12,-2.12) to[out=-10,in=120] (-0.78,-2.90) [dotted];
\draw[blue][line width =.8pt] (-2.12,-2.12) -- (-0.55,-1.38) [dotted];
\draw[blue][line width =.8pt] (-2.12,-2.12) -- (-0.00,-2.00) [dotted];
\draw[blue][line width =.8pt][rotate=90] (-2.12,-2.12) -- (-2.00, 0.00) [dotted];
\draw[blue][line width =.8pt][rotate=90] (-2.12,-2.12) -- (-1.38,-0.55) [dotted];
\draw[blue][line width =.8pt][rotate=90] (-2.12,-2.12) to[out=100,in=-30] (-2.90,-0.78) [dotted];
\draw[blue][line width =.8pt][rotate=90] (-2.12,-2.12) to[out=-10,in=120] (-0.78,-2.90) [dotted];
\draw[blue][line width =.8pt][rotate=90] (-2.12,-2.12) -- (-0.55,-1.38);
\draw[blue][line width =.8pt] ( 2.12,-2.12) to[out=  90,in=  15] ( 0.71, 0.71);
\draw[blue][line width =.8pt] ( 0.71, 0.71) to[out=  90,in= -15] ( 0.20, 1.71);
\draw[blue][line width =.8pt] ( 0.71, 0.71) -- ( 2.50, 2.50) [dotted];
\draw[blue][line width =.8pt] (-0.38, 0.92) to[out=180,in=225] (-1.00, 2.00);
% position
\draw[line width=1.5pt][fill=black!25][opacity=0] (0,-3) circle(0.5cm);
% note
\draw (0,-3.5) node[below]{$\bfS_A$};
\end{tikzpicture}
\\
\
\\
\begin{tikzpicture}[scale=1.5]
\draw[line width = 1.5pt][fill=black!25][dotted] (0,0) circle (1cm);
\draw[line width = 1.5pt](1,0) arc(0:135:1);
% red FFAS
% left below
\draw[line width = 1pt][red] (-2.48, 0.00) to[out=0,in=180] (-0.88,0.41);
\draw[line width = 1pt][red] (-2.48, 0.00) arc(90:-180:0.45) [dotted];
\foreach \x in {-22.5,0,22.5,157.5,180}
\draw[line width = 1pt][red][rotate = \x]
  (-1.00, 0.00) arc(90:290:0.20);
\foreach \x in {45.0,67.5,90,112.5,135}
\draw[line width = 1pt][red][rotate = \x][dotted]
  (-1.00, 0.00) arc(90:290:0.20);
\draw[line width = 1pt][red][dotted] ( 0.00,-1.00) -- ( 0.00,-2.48) arc(45:270:0.35);
% right below
\draw[line width = 1pt][red][rotate=90] (-2.48, 0.00) arc(90:-180:0.45) [dotted];
% right above
\draw[line width = 1pt][red]
  ( 0.88, 0.39) to[out=  30,in= -90] ( 2.50, 1.50)
  ( 1.50, 2.50) to[out= 180,in=  90] ( 0.00, 1.00) to[out=  90,in=   0] (-1.00, 2.00);
\draw[line width = 1pt][red][dotted] ( 2.50, 1.50) to[out=  90,in=   0] ( 1.50, 2.50);
\draw[line width = 1pt][red] ( 0.88, 0.41) arc(-45:120:0.3);
\draw[line width = 1pt][red][rotate=195] (-3.00, 0.00) arc(180:200:3.0);
% left above
\draw[line width = 1pt][red] ( 0.00, 1.00) arc(0:207:0.28);
% outer edges
\draw[line width = 1pt][ red ][dotted] (-3.00, 0.00) arc(180:210:3);
\draw[line width = 1pt][ red ][dotted] (-1.50,-2.60) arc(240:270:3);
\draw[line width = 1pt][ red ][dotted] ( 0.00,-3.00) arc(270:300:3);
\draw[line width = 1pt][ red ][dotted] ( 2.60,-1.50) arc(330:395:3);
\draw[line width=1.5pt][black] (-2.60,-1.50) arc(210:240:3);
\draw[line width=1.5pt][black] ( 1.50,-2.60) arc(300:330:3);
\draw[line width = 1pt][ red ][dotted] ( 0.00, 1.00) -- ( 0.00, 3.00);
% red marked points
\draw[line width=1.5pt][dotted][fill=black!25] (0,-3) circle(0.5cm);
\draw[line width=1.5pt][dotted][fill=black!25] (-3,0) circle(0.5cm);
\foreach \x in {9.5,30,60,70.25,80.5,
                99.5,120,150,180,196,213}
\draw[line width = 1pt][ red ][fill=white][rotate = \x] (-3.00, 0.00) circle(\pointsizeIII);
\foreach \x in {-22.5,0,22.5,45,67.5,90,
                112.5,135,157.5,180,202.5,
                240,270,302}
\draw[line width = 1pt][ red ][fill=white][rotate = \x] (-1.00, 0.00) circle(\pointsizeIII);
\draw[line width = 1pt][ red ][fill=white]
  ( 1.50, 2.50) circle(\pointsizeIII)
  ( 0.00,-2.48) circle(\pointsizeIII)
  (-2.48, 0.00) circle(\pointsizeIII)
  ( 0.82, 2.32) circle(\pointsizeIII);
% blue marked points
\foreach \x in {222,255,285,315}
\fill[blue][rotate = \x] (-1.00, 0.00) circle(\pointsizeIII);
\foreach \x in {45,135}
\fill[blue][rotate = \x] (-3.00, 0.00) circle(\pointsizeIII);
% homotopy band
\draw[line width = 2pt][orange]
  (-2.00, 0.00) to[out= -90,in= 180] ( 0.00,-2.00) to[out=   0,in= -90]
  ( 2.00, 0.00) to[out=  90,in=   0] ( 0.00, 2.00) to[out= 180,in=  45]
  (-1.41, 1.41);
\draw[line width = 2pt][orange][dotted]
  (-1.41, 1.41) arc(135:180:2);
% blue FFAS
% left below
\draw[blue][line width =.8pt] (-2.12,-2.12) to[out= 100,in= -30] (-2.90,-0.95) [dotted];
\draw[blue][line width =.8pt] (-2.12,-2.12) to[out=  99,in= -50] (-2.45,-0.88) [dotted];
\draw[blue][line width =.8pt] (-2.12,-2.12) to[out=  90,in=-110] (-1.75, 0.19) [dashed];
\draw[blue][line width =.8pt] (-2.12,-2.12) to[out=  80,in= 210] (-1.18, 0.15);
\draw[blue][line width =.8pt] (-2.12,-2.12) to[out=  75,in= 210] (-1.18,-0.21) [dashed];
\draw[blue][line width =.8pt] (-2.12,-2.12) to[out=  70,in= 210] (-0.98,-0.68);
\draw[blue][line width =.8pt] (-2.12,-2.12) -- (-0.68,-0.98) [dotted];
\draw[blue][line width =.8pt] (-2.12,-2.12) -- (-0.21,-1.18) [dotted];
\draw[blue][line width =.8pt] (-2.12,-2.12) to[out=   0,in= 150] (-0.55,-2.53) [dotted];
\draw[blue][line width =.8pt] (-2.12,-2.12) to[out= -10,in= 110] (-0.75,-2.93) [dotted];
\draw[blue][line width =.8pt] (-2.12,-2.12) to[out= -10,in= 110] (-1.25,-2.73) [dotted];
\draw[blue][line width =.8pt] (-2.12,-2.12) to[out=  10,in= 180] (-0.00,-1.85) [dotted];
% right below
\draw[blue][line width =.8pt] (-0.00,-1.85) to[out=   0,in= 170] ( 2.12,-2.12) [dotted];
\draw[blue][line width =.8pt] ( 2.12,-2.12) -- ( 0.80,-2.15) [dotted];
\draw[blue][line width =.8pt][rotate=90] (-2.12,-2.12) to[out= 100,in= -30] (-2.90,-0.95) [dotted];
\draw[blue][line width =.8pt][rotate=90] (-2.12,-2.12) to[out=  70,in= 210] (-1.18,-0.21) [dotted];
\draw[blue][line width =.8pt][rotate=90] (-2.12,-2.12) to[out=  70,in= 210] (-0.98,-0.68) [dotted];
\draw[blue][line width =.8pt][rotate=90] (-2.12,-2.12) -- (-0.68,-0.98) [dotted];
\draw[blue][line width =.8pt][rotate=90] (-2.12,-2.12) -- (-0.21,-1.18);
\draw[blue][line width =.8pt] ( 2.12,-2.12) to[out= 100,in= -45] ( 1.20, 0.15) [dashed];
\draw[blue][line width =.8pt] ( 2.12,-2.12) to[out=  90,in= -90] ( 2.12, 1.02) arc(0:180:0.93);
\draw[blue][line width =.8pt] ( 2.12,-2.12) -- ( 2.68, 1.31) [dashed];
\draw[blue][line width =.8pt] ( 2.12,-2.12) -- ( 2.96, 0.51) [dotted];
\draw[blue][line width =.8pt] ( 2.12,-2.12) -- ( 2.96,-0.51) [dotted];
% right above
\draw[blue][line width =.8pt] ( 0.26, 0.98) arc(180:-70:0.35) [dashed];
\draw[blue][line width =.8pt] ( 0.26, 0.98) to[out= 120,in=-120] ( 0.26, 1.85);
\draw[blue][line width =.8pt] ( 0.26, 0.98) to[out= 100,in=-120] ( 0.46, 1.75)
                                            to[out=  60,in=-150] ( 1.25, 2.48) [dashed];
% left above
\draw[blue][line width =.8pt] (-0.26, 0.98) arc( 0:235:0.32) [dashed];
\draw[blue][line width =.8pt] (-0.71, 0.71) to[out= 180,in= 225] (-1.00, 2.00);
% note
\draw (0,-3.5) node[below]{$\bfS_{\BD(A)} \simeq \BD(\bfS_A)$};
\end{tikzpicture}
\caption{An admissible curve corresponding to a homotopy band}
\label{fig}
\end{figure}
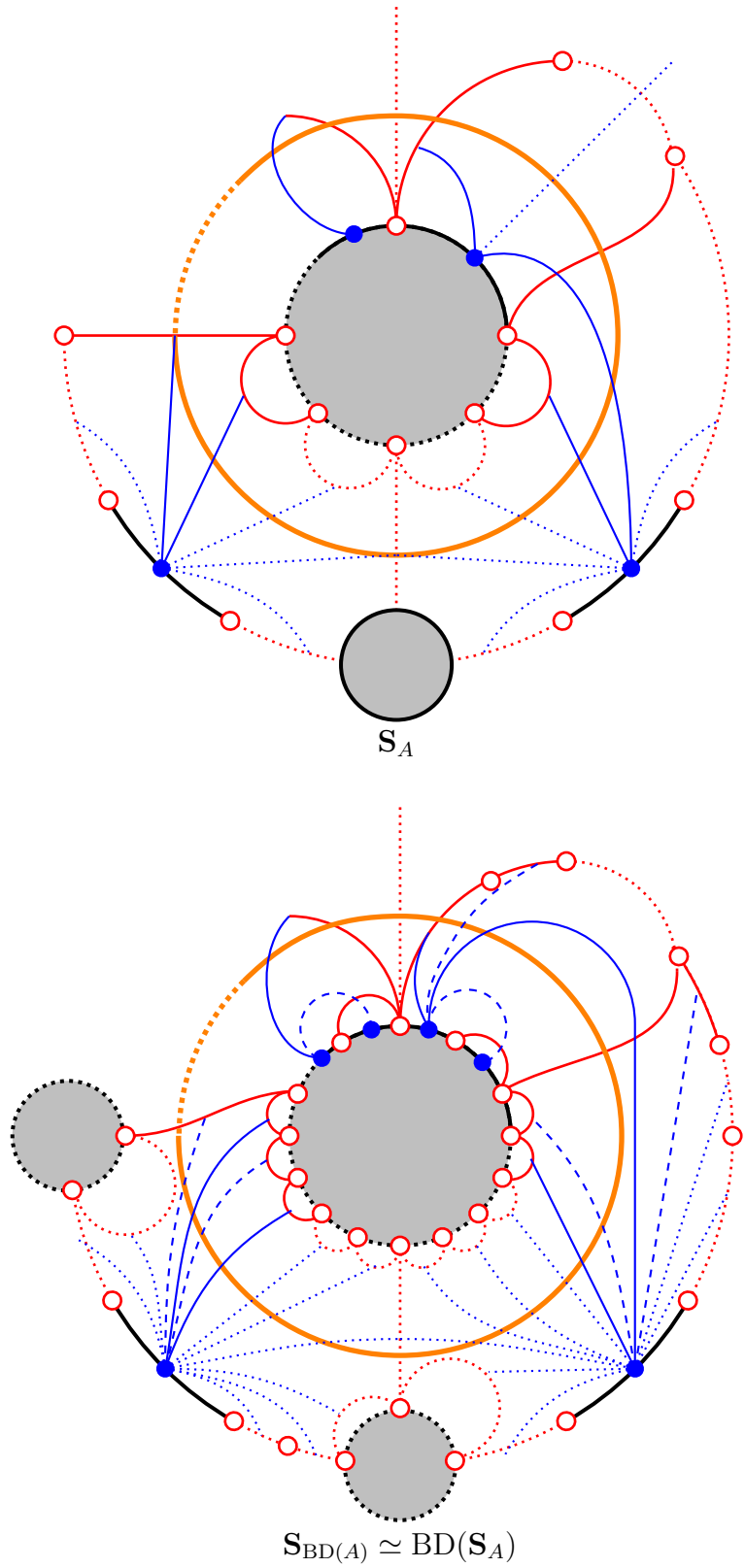
In this case, the permissible curve without endpoint corresponds to a homotopy band on the bound quiver of $A$.
By Proposition \ref{prop:BDsurface} (1), the marked surface $\bfS_{\BD(A)}$ of $\BD(A)$ is homotopic to $\BD(\bfS_A)$.
Consider the graded marked surface $\BD(\bfS_A)^{\foliation}$,
where the foliation $\foliation$ is also the foliation of $\bfS_A^{\foliation}$
since $\BD(\bfS_A)$ is obtained by, respectively, adding some marked points and arcs to $\Marked_A$ and $(\rAS)_A$.
It is easy to see that $\BD(\bfS_A)^{\foliation}$ has a subsurface $\Surf'$
(with the restricted foliation $\foliation|_{\Surf'}$)
which is of the form shown in the second picture of \Pic \ref{fig}.
It follows that the admissible curve without endpoint exists. Thus, $\BD(A)$ is derived-discrete.
Now, it is clear that $A$ is derived-discrete if and only if $\BD(A)$ is derived-discrete
since $\bfS_A^{\foliation}$ can be seen as a graded marked surface obtained by deleting
some marked points in $\Marked_{\BD(A)}$ and some arcs in $(\rAS)_{\BD(A)}$ from $\bfS_{\BD(A)}$.

The proof of Lemma \ref{lemm:CMAhband} need Proposition \ref{prop:BDsurface} (2),
and it is similar to that of Lemma \ref{lemm:BDhband}.
\end{remark}

The following result has been shown by Bekkert and Merklen.

\begin{theorem}[{\!\!\cite[Theorem 4 (b)]{BM2003}}] \label{thm:BM}
A gentle algebra $A$ is derived-discrete if and only if its bound quiver $(\Q,\I)$ does not have any homotopy band.
\end{theorem}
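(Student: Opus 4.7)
The plan is to derive this equivalence from a Butler--Ringel-type classification of indecomposable objects in the bounded derived category $\Dcat^b(A)$ in terms of homotopy strings and homotopy bands. Concretely, I would first establish a bijection
\[ \M^{\mathrm{h}} : \Str^{\mathrm{h}}(A) \cup \big(\Band^{\mathrm{h}}(A) \times \mathscr{J}\big) \to \ind(\Dcat^b(A)), \]
where $\Str^{\mathrm{h}}(A)$ and $\Band^{\mathrm{h}}(A)$ denote the sets of equivalence classes of homotopy strings and homotopy bands given in Definition \ref{def:hstr hband}, $\mathscr{J}$ is the set of Jordan blocks with non-zero eigenvalue, and $\ind(\Dcat^b(A))$ is the set of isomorphism classes of indecomposable objects of $\Dcat^b(A)$. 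The homotopy-string side is constructed by assembling the homotopy letters of $\hstr$ into a one-sided minimal projective complex, glued at the shared vertices between successive letters, while the homotopy-band side is obtained by additionally twisting the glueing of $p_m$ and $p_1$ by a Jordan block.

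For the forward direction, assume that $(\Q,\I)$ admits no homotopy band. Then every object in $\ind(\Dcat^b(A))$ is of the form $\M^{\mathrm{h}}(\hstr)$ for some homotopy string $\hstr$. The cohomological dimension vector of $\M^{\mathrm{h}}(\hstr)$ is determined by the vertices visited by $\hstr$ together with the cohomological degrees induced by the lengths of the homotopy letters; conversely, a prescribed cohomological dimension vector only allows finitely many homotopy strings, since each bounds both the total length of $\hstr$ and the number of vertices it visits. Hence only finitely many isomorphism classes of indecomposables share a given cohomological dimension vector, so $A$ is derived-discrete.

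For the reverse direction, suppose $(\Q,\I)$ contains a homotopy band $\hband$. Fix $n \in \NN_{\>= 1}$ and let $J_{\lambda, n}$ denote the Jordan block of size $n$ with eigenvalue $\lambda \in \kk \setminus \{0\}$. The cohomological dimension vector of $\M^{\mathrm{h}}(\hband, J_{\lambda, n})$ is determined by $\hband$ and $n$ alone and is independent of $\lambda$, while distinct choices of $\lambda$ produce pairwise non-isomorphic indecomposables (distinguishable, for instance, by the eigenvalues of the monodromy action on the band). This yields an infinite family of indecomposables sharing one cohomological dimension vector, so $A$ fails to be derived-discrete.

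The principal obstacle is the construction and bijectivity of $\M^{\mathrm{h}}$ itself: one must verify that every indecomposable of $\Dcat^b(A)$ admits a one-sided minimal projective resolution encoded by a homotopy string or a homotopy band, and that two such data yield isomorphic complexes precisely when they are equivalent in the sense of Definition \ref{def:hstr hband}. The non-isomorphism statement in turn requires a computation of $\Hom$- and $\End$-spaces in $\Dcat^b(A)$ between these ``string/band complexes'' using the so-called graph maps, single maps and double maps; this is the technically most involved step and was carried out by Bekkert--Merklen in \cite{BM2003}, with a cleaner reformulation in the framework of homotopy strings and bands appearing in \cite{ALP2016}.
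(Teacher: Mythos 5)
The paper does not prove this statement at all: Theorem \ref{thm:BM} is imported verbatim from Bekkert--Merklen \cite[Theorem 4 (b)]{BM2003} and used as a black box (it feeds into Proposition \ref{prop:der-disc} via Lemmata \ref{lemm:BDhband} and \ref{lemm:CMAhband}). So there is no in-paper argument to compare yours against; what you have written is a reconstruction of the proof from the cited literature. As such, your outline is the standard and correct route: classify $\ind(\Dcat^b(A))$ by string and band complexes, note that a homotopy band produces a one-parameter family of band complexes with constant cohomology dimension vector, and count string complexes per dimension vector when no band exists. This is indeed how \cite{BM2003} (and, in the homotopy-string language, \cite{ALP2016}) proceed.

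Two places where your sketch is thinner than a real proof. First, the forward direction's key claim --- that a fixed cohomology dimension vector admits only finitely many homotopy strings --- is asserted rather than argued. The top nonzero degree of a minimal complex is detected by cohomology, but bounding the \emph{total size} of the minimal projective complex by the cohomology dimension vector is genuinely the content of this direction; moreover, for gentle algebras of infinite global dimension the indecomposables of $\Dcat^b(A)$ are parametrized by possibly left-infinite, eventually periodic homotopy strings (Bekkert--Merklen's ``generalized strings''), not only the finite sequences of Definition \ref{def:hstr hband}, and the band-free hypothesis has to be used to control these as well. Second, the reverse direction's ``infinitely many $\lambda$'' argument requires $\kk$ to be infinite (over a finite field there are only finitely many Jordan blocks of each size with nonzero eigenvalue); this matches the standing hypotheses of \cite{BM2003} but should be flagged, since the present paper only assumes $\kk$ is a field. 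Neither point is a wrong turn --- both are resolved in the reference you correctly identify as carrying the technical load --- but as written your proposal defers precisely the steps that constitute the theorem.
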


Thus, we obtain the following result.

\begin{proposition} \label{prop:der-disc}
Let $A$ be a gentle algebra. Then the following statements are equivalent:
\begin{itemize}
  \item[\rm(1)] $A$ is derived-discrete;
  \item[\rm(2)] $\BD(A)$ is derived-discrete;
  \item[\rm(3)] $A^{\CMA}$ is derived-discrete.
\end{itemize}
\end{proposition}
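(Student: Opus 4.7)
The plan is to reduce the proposition to a purely combinatorial statement about bound quivers via the Bekkert--Merklen criterion (Theorem \ref{thm:BM}), and then invoke the two lemmata already established in this subsection. More precisely, because $\BD(A)$ is gentle by Theorem \ref{thm:BD(gent)} and $A^{\CMA}$ is gentle by Theorem \ref{thm:CL}, Theorem \ref{thm:BM} applies uniformly to all three algebras: each of $A$, $\BD(A)$, $A^{\CMA}$ is derived-discrete if and only if its own bound quiver admits no homotopy band.

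For the equivalence of (1) and (2), I would argue as follows. By Theorem \ref{thm:BM}, $A$ is derived-discrete iff $(\Q,\I)$ has no homotopy band, and $\BD(A)$ is derived-discrete iff $(\Q^{\BD},\I^{\BD})$ has no homotopy band. Lemma \ref{lemm:BDhband} asserts exactly that the existence of a homotopy band on $(\Q,\I)$ is equivalent to the existence of a homotopy band on $(\Q^{\BD},\I^{\BD})$. Chaining these three equivalences yields (1) $\Leftrightarrow$ (2) immediately. The equivalence of (1) and (3) is carried out along exactly the same lines, using Lemma \ref{lemm:CMAhband} in place of Lemma \ref{lemm:BDhband}: Theorem \ref{thm:BM} converts derived-discreteness of $A$ and of $A^{\CMA}$ into the absence of homotopy bands on $(\Q,\I)$ and on $(\Q^{\CMA},\I^{\CMA})$ respectively, and these two conditions are equivalent by Lemma \ref{lemm:CMAhband}.

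This is essentially the same pattern already used for Proposition \ref{prop:repr type} (Butler--Ringel plus Lemma \ref{lemm:BDband}), so no new technical work is required here. The only mild subtlety to double-check is that Theorem \ref{thm:BM} is genuinely applicable to $\BD(A)$ and $A^{\CMA}$, which relies on the fact that both constructions stay inside the class of gentle algebras; this is exactly the content of Theorems \ref{thm:BD(gent)} and \ref{thm:CL}, and so no additional hypothesis on $A$ is needed. I therefore do not expect any genuine obstacle in this proof: all the substantive work has been performed upstream, first in showing that BD-gentle algebras and CM-Auslander algebras of gentle algebras are again gentle, and second in the two ``homotopy-band transfer'' Lemmata \ref{lemm:BDhband} and \ref{lemm:CMAhband}. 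The proposition itself is then a one-line concatenation of these ingredients with Theorem \ref{thm:BM}, and the subsequent theorem generalising to arbitrary compositions $f_1 f_2 \cdots f_n(A)$ with $f_i \in \{\BD, (-)^{\CMA}\}$ follows by the same induction as in the proof of Theorem \ref{mainthm:repr-type}.
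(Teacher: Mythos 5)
Your proposal matches the paper's own proof essentially verbatim: both reduce derived-discreteness to the non-existence of homotopy bands via Theorem \ref{thm:BM} and then transfer that condition between $(\Q,\I)$, $(\Q^{\BD},\I^{\BD})$ and $(\Q^{\CMA},\I^{\CMA})$ using Lemmata \ref{lemm:BDhband} and \ref{lemm:CMAhband}. Your extra remark that Theorems \ref{thm:BD(gent)} and \ref{thm:CL} are needed to guarantee $\BD(A)$ and $A^{\CMA}$ are gentle (so that Theorem \ref{thm:BM} applies) is a point the paper leaves implicit, but there is no substantive difference in the argument.
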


\begin{proof}
Since $A$ is derived-discrete, we obtain that the bound quiver $(\Q,\I)$ of $A$ does not have any homotopy band by Theorem \ref{thm:BM}.
It follows that the bound quiver $(\Q^{\BD},\I^{\BD})$ of $\BD(A)$ does not have any homotopy band by Lemma \ref{lemm:BDhband}.
Thus, $\BD(A)$ is discrete-derived.
Conversely, we can prove that $A$ is derived-discrete if $\BD(A)$ is derived-discrete by using Lemma \ref{lemm:BDhband} and Theorem \ref{thm:BM}.
Therefore, (1) and (2) are equivalent.

By Lemma \ref{lemm:CMAhband} and Theorem \ref{thm:BM},
one can check that (1) and (3) are equivalent in a similar way.
\end{proof}

\begin{sloppypar}
\begin{theorem} \label{mainthm:der repr-type}
Keep the notations from Theorem \ref{mainthm:repr-type}.
Then $A$ is representation-discrete if and only if for any finite sequence
$f_1, f_2, \cdots, f_n \in \{\BD, (-)^{\CMA}\}$,
$f_1f_2\cdots f_n(A)$ is representation-discrete.
\end{theorem}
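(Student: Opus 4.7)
The plan is to mirror the proof of Theorem \ref{mainthm:repr-type}, replacing Proposition \ref{prop:repr type} by Proposition \ref{prop:der-disc}. All the essential content is already in place: Proposition \ref{prop:der-disc} provides the one-step equivalence between derived-discreteness of $A$ and derived-discreteness of $f(A)$ for $f\in\{\BD,(-)^{\CMA}\}$, while Theorems \ref{thm:BD(gent)} and \ref{thm:CL} guarantee that both $\BD(A)$ and $A^{\CMA}$ remain gentle algebras. Thus the operators $\BD$ and $(-)^{\CMA}$ restrict to self-maps of the set $\mathfrak{G}$ of isoclasses of gentle algebras and can be iterated without leaving the class where Proposition \ref{prop:der-disc} applies.

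I would then proceed by induction on the length $n$ of the sequence $f_1,f_2,\ldots,f_n\in\{\BD,(-)^{\CMA}\}$. The base case $n=1$ is exactly Proposition \ref{prop:der-disc}. For the inductive step, assume the result for all sequences of length $<n$. Set $A':=f_n(A)$; since $A$ is gentle, $A'$ is again gentle by Theorems \ref{thm:BD(gent)} and \ref{thm:CL}, and by Proposition \ref{prop:der-disc} the algebra $A$ is derived-discrete if and only if $A'$ is derived-discrete. Applying the inductive hypothesis to the gentle algebra $A'$ and the shorter sequence $f_1,\ldots,f_{n-1}$ gives that $A'$ is derived-discrete if and only if $f_1f_2\cdots f_{n-1}(A')=f_1f_2\cdots f_n(A)$ is derived-discrete, and chaining the two equivalences closes the induction.

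There is essentially no new difficulty beyond what has already been done: the substantive work sits in Lemmata \ref{lemm:BDhband} and \ref{lemm:CMAhband} (and hence in Proposition \ref{prop:der-disc}). The only point one must be careful about is that the class of gentle algebras is stable under both $\BD$ and $(-)^{\CMA}$, so that each intermediate algebra $f_kf_{k+1}\cdots f_n(A)$ in the induction remains gentle, which is what Theorems \ref{thm:BD(gent)} and \ref{thm:CL} guarantee. This is the mildest of obstacles, and the conclusion then follows by induction exactly as for the representation-finite version.
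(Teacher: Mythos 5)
Your proposal is correct and follows essentially the same route as the paper: the paper's proof also reduces to Proposition \ref{prop:der-disc} for the one-step equivalence, invokes Theorems \ref{thm:BD(gent)} and \ref{thm:CL} to keep each intermediate algebra gentle, and concludes by induction on $n$. Your write-up is if anything slightly more explicit about the inductive step (the paper peels off $f_1$ first rather than $f_n$, but this is immaterial).
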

\begin{proof}
By Proposition \ref{prop:der-disc}, $A$ is a representation-discrete if and only if $f_1(A)$ is representation-discrete.
By Theorems \ref{thm:CL} and \ref{thm:BD(gent)}, $f_1(A)$ is also a gentle algebra.
Thus, by Proposition \ref{prop:repr type}, $f_1(A)$ is representation-discrete if and only if $f_2f_1(A)$ is representation-discrete.
One can show that this theorem holds by induction.
\end{proof}
\end{sloppypar}

\section*{Acknowledgements}

The authors would like to thank Houjun Zhang for helpful discussions.

\section*{Funding}

Yu-Zhe Liu is supported by
National Natural Science Foundation of China (Grant Nos. 12401042, 12171207),
Guizhou Provincial Basic Research Program (Natural Science) (Grant No. ZK[2024]YiBan066)
and Scientific Research Foundation of Guizhou University (Grant Nos. [2023]16, [2022]53, [2022]65).

Chao Zhang is supported by
National Natural Science Foundation of China (Grant Nos. 12461006),
Guizhou Provincial Basic Research Program (Natural Science) (Grant No. ZD[2025]085).

% ref

%\bibliographystyle{amsplain}
%\bibliography{class-tilt}

% 渐变色 [top color=purple, bottom color=orange]

% \bibliographystyle{abbrv} % 参考文献引用-abbrv格式
% \bibliographystyle{unsrt} % 参考文献引用-unsrt格式
% \bibliographystyle{alpha} % 参考文献引用-alpha格式

% \bibliography{referLiu20250116}

% \listofchanges

\def\cprime{$'$}

\end{document}